 \newtheorem{theorem}{Theorem}[section]
 \newtheorem{corollary}[theorem]{Corollary}
 \newtheorem{lemma}[theorem]{Lemma}
 \newtheorem{proposition}[theorem]{Proposition}
 \theoremstyle{definition}
 \theoremstyle{remark}
 \newtheorem{remark}[theorem]{Remark}
  \numberwithin{equation}{section}
\renewcommand{\phi}{\varphi}
\renewcommand{\theta}{\vartheta}
\DeclareMathOperator{\sform}{\mathfrak{s}}
\DeclareMathOperator{\tform}{\mathfrak{t}}
\DeclareMathOperator{\wfrom}{\mathfrak{w}}
\DeclareMathOperator{\wform}{\mathfrak{w}}
\DeclareMathOperator{\ring}{\mathfrak{A}}
\DeclareMathOperator{\dist}{dist}
\DeclareMathOperator{\mul}{mul}
\DeclarePairedDelimiterX\sipt[2]{(}{)_{\tform}}{#1\,\delimsize\vert\,#2}
\DeclarePairedDelimiterX\sipv[2]{(}{)_{v}}{#1\,\delimsize\vert\,#2}
\DeclarePairedDelimiterX\sipw[2]{(}{)_{w}}{#1\,\delimsize\vert\,#2}
\newcommand{\Abs}[1]{\big\lvert#1\big\rvert}
\newcommand{\alg}{\mathscr{A}}
\newcommand{\balg}{\mathscr{B}}
\newcommand{\step}{\mathscr{S}}
\newcommand{\abs}[1]{\lvert#1\rvert}
\newcommand{\dupN}{\mathbb{N}}
\newcommand{\seq}[1]{(#1_{n})_{n\in\dupN}}
\newcommand{\nen}{n\in\mathbb{N}}
\newcommand{\dupR}{\mathbb{R}}
\newcommand{\dupC}{\mathbb{C}}
\newcommand{\dom}{\operatorname{dom}}
\newcommand{\ran}{\operatorname{ran}}
\newcommand{\lef}{\mathscr{L}(E;F)}
\newcommand{\bha}{\mathscr{B}(\hila)}
\newcommand{\Breg}{\widehat{B}_{\textup{reg}}}
\newcommand{\M}{\mathcal{M}}
\newcommand{\sigef}{\sigma(E,F)}
\newcommand{\sigfe}{\sigma(F,E)}
\newcommand{\hil}{\mathcal H}
\newcommand{\hila}{\hil^{}_A}
\newcommand{\hilab}{\hil_{A+B}}
\newcommand{\hilb}{\hil^{}_B}
\newcommand{\kil}{\mathcal K}
\DeclarePairedDelimiterX\sip[2]{(}{)}{#1\,\delimsize\vert\,#2}
\DeclarePairedDelimiterX\siptilde[2]{(}{)_{\!_{\widetilde{A}}}}{#1\,\delimsize\vert\,#2}
\DeclarePairedDelimiterX\sipf[2]{(}{)_{f}}{#1\,\delimsize\vert\,#2}
\DeclarePairedDelimiterX\sipg[2]{(}{)_{g}}{#1\,\delimsize\vert\,#2}
\DeclarePairedDelimiterX\siptw[2]{(}{)_{\tform+\wform}}{#1\,\delimsize\vert\,#2}
\DeclarePairedDelimiterX\set[2]{\{}{\}}{#1\,:\,#2}
\DeclarePairedDelimiterX\dual[2]{\langle}{\rangle}{#1,#2}
\DeclarePairedDelimiterX\sipa[2]{(}{)_{\!_A}}{#1\,\delimsize\vert\,#2}
\DeclarePairedDelimiterX\sipc[2]{(}{)_{\!_C}}{#1\,\delimsize\vert\,#2}
\DeclarePairedDelimiterX\sipab[2]{(}{)_{\!_{A+B}}}{#1\,\delimsize\vert\,#2}
\DeclarePairedDelimiterX\sipb[2]{(}{)_{\!_B}}{#1\,\delimsize\vert\,#2}
\newcommand{\anti}[1]{\bar{#1}'}
\newcommand{\limn}{\lim\limits_{n\rightarrow\infty}}
\title[Lebesgue decomposition of positive operators]{Operators on anti-dual pairs:\\ Lebesgue decomposition of positive operators}
\author[Zs. Tarcsay]{Zsigmond Tarcsay}
\thanks{The author was supported by the DAAD-Tempus PPP Grant ``Harmonic analysis and extremal problems'' and by the ``For the Young Talents of the Nation'' scholarship program (NTP-NFT\"O-17) of the Hungarian Ministry of Human
Capacities.}
\address{%
Zs. Tarcsay \\ Department of Applied Analysis  and Computational Mathematics\\ E\"otv\"os Lor\'and University\\ P\'azm\'any P\'eter s\'et\'any 1/c.\\ Budapest H-1117\\ Hungary}
\email{tarcsay@cs.elte.hu}
\subjclass[2010]{Primary 46L51, 47B65, Secondary 28A12, 46K10, 47A07}
\keywords{Positive operator, anti-dual pair, Lebesgue decomposition, absolute continuity, singularity, Hilbert space, Hermitian form, representable functional, additive functional}
\begin{document}
\maketitle
\begin{abstract}
 In this paper we introduce and study absolute continuity and singularity of positive operators acting on anti-dual pairs. We  establish a general 
 theorem that  can be considered as a common generalization of various earlier Lebesgue-type decompositions.
 Different algebraic and topological characterizations of absolute continuity and singularity are supplied and also a complete description of  uniqueness of the decomposition is provided. We apply the developed decomposition theory to some concrete objects including  Hilbert space operators, Hermitian forms, representable functionals, and additive set functions.
\end{abstract}
\section{Introduction}
This paper is part of a unification project aiming to find a common framework and generalization for various results obtained in  different branches of functional analysis including extension, dilation and decomposition theory. One important class of such results are decomposition theorems analogous to the well known Lebesgue decomposition of measures. What do we mean about analogous?  In several cases, transformations of a given system can be grouped into two extreme classes according to the behavior with respect to their qualitative properties.
These particular classes are the so-called \emph{regular} transformations (i.e.,  transformations with \lq\lq nice" properties) and the so-called \emph{singular} ones (transformations that are hard to deal with). Of course, regularity and singularity may have multiple meanings  depending on the context.  A decomposition of an object into regular and singular parts is called a Lebesgue-type decomposition. 

In order to understand a structure better, it can be effective 
to characterize its regular and singular elements. This explains  why a regular-singular type decomposition theorem may have theoretic importance, especially when the corresponding regular part can be interpreted in a canonical way. The prototype of such results is the celebrated Radon-Nikodym  theorem  stating that every $\sigma$-finite measure  splits uniquely into absolutely continuous and singular parts with respect to any other measure, and the absolutely continuous part has an integral representation. Returning to the previous idea, the Radon-Nikodym theorem can be phrased as follows: if we want to decide whether a set function can be represented as a point function, we only need to know if it is   absolutely continuous or not. That is to say, in this concrete  situation, the appropriate regularity concept is absolute continuity.

In the last 50 years quite a number of authors have made significant contributions to the vast literature of non-commutative Lebesgue-Radon-Nikodym theory -- here we mention only Ando \cite{Ando}, Gudder \cite{Gudder}, Inoue \cite{Inoue}, Kosaki \cite{Kosaki} and Simon \cite{Simon}, and from the recent past Di Bella and Trapani \cite{trapani}, Corso \cites{rosario1,rosario2,rosario3}, ter Elst and Sauter \cite{tES},  Gheondea \cite{Gheondea1}, Hassi et al. \cites{Hassi2007,Hassi2009a,Hassi2009, Hassilebesgue2018},   Sebesty\'en and Titkos \cite{TitokRN}, Sz\H ucs \cite{Szucs2012}, Vogt \cite{vogt}. 

The purpose of the present paper is to develop and investigate an abstract decomposition theory that can be considered as a common generalization of many of the aforementioned results on Lebesgue-type decompositions. The key observation is that the corresponding absolute continuity and singularity concepts rely only on some topological and algebraic  properties of an operator acting between an appropriately chosen vector space and its conjugate dual. So that, the problem of decomposing Hilbert space operators, representable functionals, Hermitian forms and measures can  be transformed into the problem of decomposing such an abstract operator. 

 In this note we are going to investigate Lebesgue decompositions of positive operators on a so called anti-dual pair. Hence, for the readers sake, we gathered in Section 2 the most important  facts about anti-dual pairs and operators between them. We also provide here a variant of the famous Douglas factorization theorem. Section 3 contains the main result of the paper (Theorem \ref{T:main_Lebdecomp}), a direct generalization of Ando's Lebesgue decomposition theorem \cite{Ando}*{Theorem 1} to the anti-dual pair context. It states that every positive operator on a weak-* sequentially complete anti-dual pair splits into a sum of absolutely continuous and singular parts with respect to another positive operator. We also prove that,  when decomposing two positive operators with respect to each other,  the corresponding absolute continuous parts are always mutually absolutely continuous. 
In Section 4 we introduce the parallel sum of two positive operators and furnish a different approach to the Lebesgue decomposition in terms of the parallel addition. In Section 5 we establish two characterizations of absolute continuity:  Theorem \ref{T:almostdom=ac} is  of algebraic nature, as it relies on the order structure of positive operators. Theorem \ref{T:R-N} is rather topological in character: it states that a positive operator is absolutely continuous with respect to another if and only if it can be uniformly approximated with the other one in a certain sense. Section 6 is devoted to characterizations of singularity, Section 7 deals with the uniqueness of the decomposition.  To conclude the paper, in Section 8 we apply the developed decomposition theory to some concrete objects including  Hilbert space operators, Hermitian forms, representable functionals, and additive set functions.

\section{Preliminaries}

The aim of this chapter is to collect all the technical ingredients that are necessary to read the paper.
\subsection{Anti-dual pairs}
Let $E$ and $F$ be complex vector spaces which are intertwined via a sesquilinear function $$\dual\cdot\cdot:F\times E\to\dupC,$$ which separates the points of $E$ and $F$. We shall refer to $\dual\cdot\cdot$ as \emph{anti-duality} and the triple  $(E,F,\dual\cdot\cdot)$ will be called an \emph{anti-dual pair} and shortly denoted by $\dual FE$. In this manner we may speak about \emph{symmetric} and, first and foremost, \emph{positive operators} from $E$ to $F$. Namely, we call an operator $A:E\to F$ \emph{symmetric}, if 
\begin{equation*}
    \dual{Ax}{y}=\overline{\dual{Ay}{x}},\qquad x,y\in E,
\end{equation*}
furthermore, $A$ is said to be \emph{positive}, if its ``quadratic form'' is positive semidefinite: 
\begin{equation*}
    \dual{Ax}{x}\geq 0,\qquad x\in E.
\end{equation*}
Clearly, every positive operator is symmetric. 

Most natural anti-dual pairs arise in the following way. Let $ \bar E^*$ denote the conjugate algebraic dual of a complex vector space $E$ and let $F$ be a separating subspace of $\bar E^*$. Then 
\begin{equation*}
    \dual{f}{x}\coloneqq f(x),\qquad x\in E, f\in F
\end{equation*}
defines an anti-duality, and the pair $\dual FE$ so obtained is called the natural anti-dual pair. (In fact, every anti-dual pair can be regarded as a natural anti-dual pair when $F$ is identified with $\widehat F$, the set consisting of the conjugate linear functionals $\dual f\cdot$, $f\in F$.)
Our prototype of anti-dual pairs is the system $((\hil,\hil),\sip{\cdot}{\cdot})$ where $\hil$ is a Hilbert space with inner product $\sip{\cdot}{\cdot}$.

Just as in the dual pair case (see e.g. \cite{Schaefer}), we may endow $E$ and $F$ with the corresponding weak topologies $\sigef$, resp. $\sigfe$, induced by the families $\set{\dual f\cdot }{f\in F}$, resp. $\set{\dual\cdot x}{x\in E}$. Both $\sigef$ and $\sigfe$ are locally convex Hausdorff topologies such that 
\begin{equation}\label{E:E'=F}
    \anti E=F,\qquad F'=E,
\end{equation}
where $F'$ and $\anti E$ refer to the topological dual and anti-dual space of $F$ and $E$, respectively, and  the vectors  $f\in F$ and $x\in E$ are identified  with $\dual f\cdot$, and  $\dual \cdot x$, respectively.  We also recall  the useful property of weak topologies that, for a topological vector space $(V,\tau)$, a linear operator $T:V\to F$ is $\sigfe$-contionuous if and only if 
\begin{equation*}
    T_x(v)\coloneqq \dual{Tv}{x},\qquad v\in V
\end{equation*}
is continuous for every $x\in E$. 

This fact and \eqref{E:E'=F} enables us to define the adjoint (that is, the topological transpose) of a weakly continuous operator. Let $\dual{F_1}{E_1}$ and $\dual{F_2}{E_2}$ be anti-dual pairs and  $T:E_1\to F_2$ a weakly continuous linear operator, then the (necessarily weakly continuous) linear operator $T^*:E_2\to F_1$ satisfying \begin{equation*}
    \dual{Tx_1}{x_2}_2=\overline{\dual{T^*x_2}{x_1}_1},\qquad  x_1\in E_1,x_2\in E_2
\end{equation*} 
is called the adjoint of $T$. In particular, the adjoint of a weakly continuous operator $T:E\to F$ emerges  again as an operator of this type. The set of weakly continuous linear operators $T:E\to F$ will be denoted by $\lef$. An operator $T\in \lef$ is called self-adjoint if $T^*=T$. It is immediate that every symmetric operator (hence every positive operator) is weakly continuous and self-adjoint. 

Finally, we recall that a topological vector space $(V,\tau)$ is called complete if every Cauchy net in $V$ is convergent. Similarly, $V$ is sequentially complete if every Cauchy sequence in $V$ is convergent. We shall call the anti-dual pair $\dual FE$ weak-* (sequentially) complete if $(F,\sigfe)$ is (sequentially) complete. It is easy to see that $\dual{\bar E^*}{E}$ is always weak-* complete. It can be deduced from the Banach-Steinhaus theorem that, for a Banach space $E$,  $\dual{\anti E}{E}$ is weak-* sequentially complete (but not weak-*  complete unless $E$ is finite dimensional). 
\subsection{Factorization of positive operators.}\label{Sub:factorizepositive}
Let $\dual FE$ be an anti-dual pair and $A:E\to F$ a positive operator. As we have already mentioned, $A\in \lef$ and $A=A^*$. Now we give the prototype of positive operators.
Let $\hil$ be a complex Hilbert space and let $T:E\to\hil$  be a weakly continuous (i.e., $\sigma(E,F)$-$\sigma(\hil,\hil)$ continuous) linear operator, then the adjoint operator $T^*:\hil\to F$ is again weakly continuous and the product $T^*T\in\lef$ is positive:
    \begin{equation*}\label{E:T*T}
        \dual{T^*Tx}{x}=\sip{Tx}{Tx}\geq0,\qquad x\in E.
    \end{equation*}
On a weak-* sequentially complete anti-dual pair $\dual FE$, every positive operator $A\in\lef$ can be written as $A=T^*T$. We sketch here the proof of this fact because we will use the construction continuously; for more details the reader is referred to \cite{TZS-TT:KreinNeumannADP}*{Theorem 3.1}.

Let $\dual FE$ be a  weak-* sequentially complete anti-dual pair and let $A\in\lef$ be a positive operator. Endow the range space $\ran A$ with the following inner product:
\begin{equation*}
    \sipa{Ax}{Ay}\coloneqq \dual{Ax}{y},\qquad x,y\in E.
\end{equation*}
One can show that $\sipa\cdot\cdot$ is well defined and positive definite, hence $(\ran A,\sipa\cdot\cdot)$ is a pre-Hilbert space. Let $\hila$ denote its Hilbert completion so that $\ran A\subseteq \hila$ forms a norm dense linear subspace. The canonical embedding operator
\begin{equation}\label{E:J_A}
        J^{}_A(Ax)=Ax,\qquad x\in E,
    \end{equation}
of $\ran A\subseteq \hila$ into $F$ is weakly continuous, hence $J_A$ extends to an everywhere defined weakly continuous operator because of weak-* sequentially completeness of $F$. We continue to write $J_A\in\mathscr L(\hila,F)$ for this extension. The adjoint operator $J_A^*\in\mathscr{L}(E,\hila)$ admits the canonical property 
\begin{equation}\label{E:J*}
        J_A^*x=Ax\in\hila,\qquad x\in E,
\end{equation}
that leads  to the useful factorization of $A$:
\begin{equation}\label{E:JAJA}
    A=J_A^{}J_A^*.
\end{equation}

\subsection{Range of the adjoint operator} 
Operators of type $T\in\mathscr{L}(E,\hil)$ will play a  peculiar role in the theory of positive operators, as we have seen, every positive operator $A$ on a weak-* sequentially complete anti-dual pair admits a factorization $A=T^*T$ through a Hilbert space $\hil$. In this section we describe the range of  the adjoint operator $T^*\in\mathscr L(\hil,F)$. The key result is a variant to Douglas' famous range inclusion theorem \cite{Douglas} (for further  generalizations to Banach space setting see Barnes \cite{Barnes} and Embry \cite{Embry}).

\begin{theorem}\label{T:Douglas}
Let $\dual FE$ be an anti-dual pair and let $\hil_1,\hil_2$ be Hilbert spaces. Given two weakly continuous operators $T_j\in \mathscr L(\hil_j,F)$ ($j=1,2$)  the following assertions are equivalent:
\begin{enumerate}[label=\textup{(\roman*)}, labelindent=\parindent]
\item $\ran T_1\subseteq \ran T_2$,
\item there is a constant $\alpha\geq0$ such that $$\|T_1^*x\|^2\leq \alpha \|T_2^*x\|^2,  \qquad x\in E,$$
\item for every $h_1\in \hil_1$ there is a constant $\alpha_{h_1}\geq0$ such that $$\abs{\dual{T_1h_1}{x}}^2\leq \alpha_{h_1}\|T_2^*x\|^2,\qquad x\in E,$$ 
\item there is a bounded operator $D:\hil_1\to\hil_2$ such that $$T_1=T_2D.$$
\end{enumerate}
Moreover, if any (hence all) of (i)-(iv) is valid, then there is a unique $D$ such that 
\begin{enumerate}[label=\textup{(\alph*)}]
\item $\ran D\subseteq (\ker T_2)^{\perp}$,
\item $\ker T_1=\ker D$,
\item $\|D\|^2= \inf\set{\alpha\geq0}{\|T_1^*x\|^2\leq \alpha \|T_2^*x\|^2, (x\in E)}$.
\end{enumerate} 
 \begin{figure}[H]
\begin{tikzpicture}
\draw (10,2) node (A) {$\hil_1$};
\draw (10,0) node (B) {$\hil_2$};
\draw[thick,-latex] (A) -- (B)
 node[pos=0.5,left] {$D$};
\draw (12,1) node (C) {$F$};
\draw[thick,-latex] (A) -- (C)
 node[pos=0.5,above] {$T_1$\,};
\draw[thick,-latex] (B) -- (C)
 node[pos=0.5,below] {\,$T_2$};
\end{tikzpicture}
\caption{Factorization
of $T_1$ along $T_2$}
\end{figure}
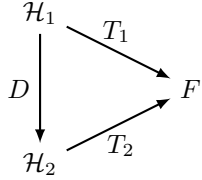
\end{theorem}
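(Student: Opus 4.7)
The plan is to establish the cycle of implications (iv) $\Rightarrow$ (ii) $\Rightarrow$ (iii) $\Rightarrow$ (i) $\Rightarrow$ (iv), and then to read off the supplementary properties (a)--(c) and uniqueness from the construction used in the last step. The implication (iv) $\Rightarrow$ (ii) follows at once from $T_1^{*}=D^{*}T_2^{*}$, which gives $\|T_1^{*}x\|^2\leq \|D\|^2\|T_2^{*}x\|^2$; the step (ii) $\Rightarrow$ (iii) is a Cauchy--Schwarz bound, since $|\dual{T_1h_1}{x}|=|\sip{T_1^{*}x}{h_1}|\leq \|h_1\|\|T_1^{*}x\|$, so $\alpha_{h_1}:=\alpha\|h_1\|^2$ works.

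The core step is (iii) $\Rightarrow$ (i). For fixed $h_1\in\hil_1$ I would introduce the functional $\phi_{h_1}:\ran T_2^{*}\to\dupC$ defined by $\phi_{h_1}(T_2^{*}x):=\overline{\dual{T_1h_1}{x}}$. The estimate in (iii) shows simultaneously that $\phi_{h_1}$ is well defined (if $T_2^{*}x=0$, the right side vanishes) and continuous with norm at most $\sqrt{\alpha_{h_1}}$. Extending $\phi_{h_1}$ by continuity to the closure $\overline{\ran T_2^{*}}=(\ker T_2)^{\perp}$ and invoking the Riesz representation theorem produces a unique $g_{h_1}\in(\ker T_2)^{\perp}$ with $\phi_{h_1}(u)=\sip{u}{g_{h_1}}$ on that subspace. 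Taking $u=T_2^{*}x$ and unpacking the adjoint identity $\sip{T_2^{*}x}{g_{h_1}}=\overline{\dual{T_2g_{h_1}}{x}}$ gives $\dual{T_2g_{h_1}}{x}=\dual{T_1h_1}{x}$ for every $x\in E$; since $E$ separates the points of $F$, we conclude $T_1h_1=T_2g_{h_1}\in\ran T_2$.

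For (i) $\Rightarrow$ (iv), the orthogonal decomposition $\hil_2=\ker T_2\oplus(\ker T_2)^{\perp}$ allows one to define $Dh_1$ as the unique element of $(\ker T_2)^{\perp}$ with $T_2Dh_1=T_1h_1$. Linearity is immediate; boundedness follows from the closed graph theorem, since if $h_n\to h$ in $\hil_1$ and $Dh_n\to g$ in $\hil_2$, then the weak-to-weak continuity of $T_1,T_2$ (combined with the fact that norm convergence in a Hilbert space is stronger than $\sigma(\hil_j,\hil_j)$-convergence) yields $T_2g=T_1h$ in $\sigfe$, and the closedness of $(\ker T_2)^{\perp}$ forces $g=Dh$. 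Once the equivalence is secured, the supplementary assertions are quick consequences: property (a) is built into the construction, while uniqueness and (b) use only $\ker T_2\cap(\ker T_2)^{\perp}=\{0\}$. For (c), the step (iv) $\Rightarrow$ (ii) already shows $\|D\|^2$ is an admissible value of $\alpha$; conversely, for any admissible $\alpha$ the identity $\sip{Dh_1}{T_2^{*}x}=\sip{T_1^{*}x}{h_1}$, combined with the density of $\ran T_2^{*}$ in $(\ker T_2)^{\perp}$, gives $\|Dh_1\|\leq\sqrt{\alpha}\|h_1\|$, hence $\|D\|^2\leq\alpha$.

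The main obstacle I anticipate is careful bookkeeping of the adjoint relations: the anti-duality $\dual{\cdot}{\cdot}$ is linear in the first slot while Hilbert space inner products interact with it through complex conjugates, so placing the bars correctly in the Riesz step requires some care. The closed graph argument itself is robust, but it is worth verifying explicitly that weak-to-weak continuity of the $T_j$ genuinely suffices in the limit transfer — which it does, precisely because norm convergence always implies $\sigma(\hil_j,\hil_j)$-convergence.
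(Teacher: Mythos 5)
Your proof is correct, and it reaches the factorization by a genuinely different organization than the paper. The paper proves (iii)$\Rightarrow$(iv) in one stroke: the Riesz representation applied to the conjugate-linear functional $T_2^*x\mapsto\dual{T_1h_1}{x}$ already \emph{defines} $D$ on all of $\hil_1$, and boundedness is then obtained by showing $D$ is closable --- the domain of $D^*$ contains the dense set $\ran T_2^*+(\ran T_2^*)^{\perp}$ --- before invoking the closed graph theorem; the remaining implications are dismissed as immediate. You instead split the work: your Riesz step only extracts the pointwise range inclusion (iii)$\Rightarrow$(i), and you then build $D$ in (i)$\Rightarrow$(iv) by selecting, for each $h_1$, the unique preimage of $T_1h_1$ in $(\ker T_2)^{\perp}$, proving boundedness by checking directly that the graph of $D$ is closed, using that norm convergence implies weak convergence, that $T_1,T_2$ are weak-to-weak-* continuous, and that $\sigma(F,E)$ is Hausdorff because the anti-duality separates points. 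Both routes end at the closed graph theorem, but your limit-transfer argument is arguably more transparent and uses the weak continuity hypothesis where the paper's closability computation uses only Hilbert-space geometry; the paper's version has the small advantage of producing $D$ with properties (a)--(c) in a single construction, whereas you must observe separately that your selection agrees with what Riesz produces. Your treatment of (a)--(c) and uniqueness is sound: uniqueness indeed needs only $\ker T_2\cap(\ker T_2)^{\perp}=\{0\}$ together with (a), and your dual-norm estimate for (c) over the dense set $\set{T_2^*x}{x\in E}\subseteq(\ker T_2)^{\perp}$ is equivalent to the paper's estimate $\|D^*(T_2^*x+u)\|^2\leq\alpha\|T_2^*x+u\|^2$. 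One pedantic note, fitting your own warning about conjugation bookkeeping: with the paper's convention that inner products are linear in the first slot, the identity you use for (c) should read $\sip{Dh_1}{T_2^*x}=\sip{h_1}{T_1^*x}$ rather than $\sip{Dh_1}{T_2^*x}=\sip{T_1^*x}{h_1}$ --- the two sides of your version differ by a complex conjugate --- but since your estimate uses only absolute values, nothing breaks.
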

\begin{proof}
Implications (i)$\Rightarrow$(iii), (iv)$\Rightarrow$(ii)$\Rightarrow$(iii) and (iv)$\Rightarrow$(i) are immediate. We only prove (iii)$\Rightarrow$(iv): fix $h_1$ in $\hil_1$ and define a conjugate linear functional $\phi:\ran T_2^*\to\dupC$ by
\begin{align*}
\phi(T_2^*x)\coloneqq \dual{T_1h_1}{x},\qquad x\in E.
\end{align*}
By  (iii) one concludes that $\phi$ is well defined and continuous by norm bound $\|\phi\|\leq \sqrt{\alpha_{h_1}}$. The Riesz representation theorem yields then a unique representing vector $Dh_1\in \overline{\ran T_2^*}$ such that 
\begin{align*}
\dual{T_1h_1}{x}=\sip{Dh_1}{T_2^*x},\qquad x\in E, h_1\in \hil_1.
\end{align*}
It is easy to check that $D:\hil_1\to\hil_2$ is linear and that $T_2D=T_1$. Our only duty is to prove that $D$ is continuous. Take $h_2\in(\ran T_2^*)^{\perp}$ and $x\in E$, then   for any $h_1\in\hil_1$ 
\begin{align*}
\sip{Dh_1}{h_2+T_2^*x}=\sip{Dh_1}{T_2^*x}=\sip{h_1}{T_1^*x}.
\end{align*}
This means that the domain of $D^*$ includes the dense set $\ran T_2^*+(\ran T_2^*)^{\perp}$, hence $D$ is closable. By the closed graph theorem we conclude that $D$ is continuous. 

Observe also that $D$ obtained above fulfills conditions (a)-(c) above. Indeed, (a) and (b) are straightforward, and if (ii) holds for some $\alpha\geq0$ then for $x\in E$ and $u\in (\ran T_2^*)^{\perp}$ we have $D^*u=0$ by (a). Consequently,
\begin{align*}
\|D^*(T_2^*x+u)\|^2=\|T_1^*x\|^2\leq \alpha\|T_2^*x\|^2\leq \alpha\|T_2^*x+u\|^2,
\end{align*}
which implies $\|D^*\|^2\leq  \alpha$, hence $D$ satisfies (c). Finally, the uniqueness follows easily from (b).
\end{proof}
\begin{corollary}\label{C:factor}
 Let $\dual FE$ be a weak-* sequentially complete anti-dual pair. If $A,B\in\lef$ are positive operators such that $B\leq A$, then there is a unique positive contraction $C\in\bha$ such that $B=J_ACJ_A^*$. 
\end{corollary}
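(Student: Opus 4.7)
The plan is to obtain $C$ directly from Theorem \ref{T:Douglas} applied to $T_1 = J_B$ and $T_2 = J_A$, and then derive uniqueness from the density of $\ran J_A^*$ in $\hila$ together with injectivity of $J_A$.

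First I would translate the hypothesis $B \leq A$ into the range-inclusion-type condition of Douglas' theorem. Since $A = J_A^{} J_A^*$ and $B = J_B^{} J_B^*$ by \eqref{E:JAJA}, for every $x \in E$ one has
\begin{equation*}
\|J_B^* x\|^2 = \dual{Bx}{x} \leq \dual{Ax}{x} = \|J_A^* x\|^2.
\end{equation*}
This is precisely condition (ii) of Theorem \ref{T:Douglas} with constant $\alpha = 1$, applied to $T_1 = J_B \in \mathscr{L}(\hilb,F)$ and $T_2 = J_A \in \mathscr{L}(\hila,F)$. Consequently there exists a unique bounded operator $D: \hilb \to \hila$ with $J_B = J_A D$ and $\|D\|^2 \leq 1$. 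Setting $C \coloneqq D D^* \in \bha$ then yields a positive contraction with
\begin{equation*}
J_A^{} C J_A^* = J_A^{} D D^* J_A^* = (J_A D)(J_A D)^* = J_B^{} J_B^* = B,
\end{equation*}
which establishes existence.

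For uniqueness, suppose $C, C' \in \bha$ are positive contractions with $J_A^{} C J_A^* = J_A^{} C' J_A^* = B$. Then $J_A (C - C') J_A^* x = 0$ for every $x \in E$. Because $J_A^* x = Ax$ by \eqref{E:J*} and $\ran A$ is by construction norm-dense in $\hila$, it follows that $J_A (C-C') h = 0$ for every $h \in \hila$. The final step is to verify that $J_A$ is injective: if $J_A h = 0$ for some $h \in \hila$, then for every $x \in E$,
\begin{equation*}
\sipa{h}{Ax} = \sipa{h}{J_A^* x} = \dual{J_A h}{x} = 0,
\end{equation*}
so $h$ is orthogonal to the dense subspace $\ran A$, forcing $h = 0$. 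Thus $C = C'$.

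The main obstacle I anticipate is the injectivity of $J_A$, which is the only step that is not purely formal; once it is in hand, both halves of the statement drop out immediately from Douglas' theorem and the canonical identities \eqref{E:J*}--\eqref{E:JAJA}. Everything else is bookkeeping, and no further completeness or topological considerations beyond those already absorbed into the construction of $J_A$ should be needed.
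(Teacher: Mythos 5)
Your proposal is correct and takes essentially the same route as the paper: both translate $B\leq A$ into $\|J_B^*x\|^2\leq\|J_A^*x\|^2$, apply Theorem \ref{T:Douglas} with $T_1=J_B$, $T_2=J_A$ to obtain a contraction $D$ with $J_B=J_AD$, and set $C\coloneqq DD^*$. The only difference is that you carry out in detail the uniqueness step (injectivity of $J_A$ plus norm-density of $\ran A=\ran J_A^*$ in $\hila$) which the paper dismisses as following ``easily'' from that density, and your execution of it is sound.
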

\begin{proof}
Let $\hilb$ stand for the auxiliary Hilbert space obtained by the  procedure of Subsection \ref{Sub:factorizepositive}, with $A$ replaced by $B$.  Since $B\leq A$, we have  $\|J_B^*x\|^2\leq \|J_A^*x\|^2$ for every $x\in E$. By Theorem \ref{T:Douglas} there exists a bounded operator $D\in\mathscr B(\hila,\hilb)$, $\|D\|\leq 1$, such that $J_B=J_AD$, hence $C\coloneqq DD^*$ satisfies  
\begin{eqnarray*}
 J_ACJ_A^*=J_BJ_B^*=B. 
\end{eqnarray*}
The uniqueness of $C$ follows easily from the fact that $\ran J_A^*=\ran A$ is dense in $\hila$.
\end{proof}

The following range description of the adjoint operator  is similar in
spirit to \cite{Smulian}, cf. also  \cite{Sebestyen83}.

\begin{lemma}\label{L:adjointrange}
Let $\dual FE$ be an anti-dual pair, $\hil$ a Hilbert space and $T:E\to \hil$ a weakly  continuous linear operator. A vector $y\in F$ belongs to the range of $T^*$  if and only if there exists $m_y\geq0$ such that 
 \begin{equation}\label{E:adjoitrange}
     \abs{\dual{y}{x}}^2\leq m_y \|Tx\|^2,\qquad x\in E.
\end{equation}
\end{lemma}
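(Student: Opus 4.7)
The plan is to prove the two implications separately, treating the forward implication as essentially Cauchy--Schwarz and the converse as a single-vector adaptation of the argument used in the implication (iii)$\Rightarrow$(iv) of Theorem~\ref{T:Douglas}.

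For the ``only if'' direction, suppose $y = T^*h$ for some $h \in \hil$. Using the adjoint relation associated with the anti-duality (with $(\hil,\hil)$ viewed as an anti-dual pair via the inner product), I would unwind
\begin{equation*}
    \dual{y}{x} = \dual{T^*h}{x} = \overline{\sip{Tx}{h}} = \sip{h}{Tx}, \qquad x \in E,
\end{equation*}
and then apply the Cauchy--Schwarz inequality to conclude $|\dual{y}{x}|^2 \leq \|h\|^2 \|Tx\|^2$, so that $m_y \coloneqq \|h\|^2$ works.

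For the converse, I would construct a representing vector via Riesz. Assuming \eqref{E:adjoitrange}, I define a conjugate linear functional on $\ran T \subseteq \hil$ by
\begin{equation*}
    \phi(Tx) \coloneqq \dual{y}{x}, \qquad x \in E.
\end{equation*}
The inequality \eqref{E:adjoitrange} immediately does two jobs at once: it guarantees that $\phi$ is well defined (if $Tx_1 = Tx_2$ then $\|T(x_1-x_2)\| = 0$, forcing $\dual{y}{x_1 - x_2} = 0$), and that $\phi$ is bounded with $\|\phi\| \leq \sqrt{m_y}$. I would then extend $\phi$ by continuity to $\overline{\ran T}$ and by zero to $(\ran T)^\perp$, obtaining a bounded conjugate linear functional on all of $\hil$. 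Riesz representation (applied to the linear functional $\overline{\phi}$) produces a unique $h \in \hil$ with $\phi(k) = \sip{h}{k}$ for every $k \in \hil$. Specializing to $k = Tx$ gives
\begin{equation*}
    \dual{y}{x} = \phi(Tx) = \sip{h}{Tx} = \dual{T^*h}{x}, \qquad x \in E,
\end{equation*}
and since the anti-duality separates points of $F$, this forces $y = T^*h \in \ran T^*$.

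I don't anticipate a real obstacle here; the only point that requires a moment of care is keeping the conjugate-linearity conventions consistent between the anti-duality $\dual{\cdot}{\cdot}$ (conjugate linear in the second argument) and the Hilbert space inner product $\sip{\cdot}{\cdot}$, so that the Riesz-representing vector $h$ ends up on the correct side of the bracket when recovered as $T^*h$. Everything else is bookkeeping analogous to the argument already given in the proof of Theorem~\ref{T:Douglas}.
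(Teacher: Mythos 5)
Your proposal is correct and follows essentially the same route as the paper: Cauchy--Schwarz via the adjoint identity $\dual{T^*h}{x}=\sip{h}{Tx}$ for the forward direction, and for the converse the bounded conjugate linear functional $Tx\mapsto\dual{y}{x}$ on $\ran T$ represented by a Riesz vector $h$, yielding $y=T^*h$. The only cosmetic difference is that you extend $\phi$ to all of $\hil$ and invoke that the anti-duality separates points of $F$, steps the paper leaves implicit by taking $h\in\overline{\ran T}$ directly.
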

\begin{proof}
Assume first that $y=T^*h$ for some $h\in\hil$, then
\begin{eqnarray*}
\abs{\dual{y}{x}}^2=\abs{\dual{T^*h}{x}}^2=\abs{\sip{Tx}{h}}^2\leq \|h\|^2\|Tx\|^2,\qquad x\in E,
\end{eqnarray*}
hence (i) implies (ii). Conversely, (ii) expresses precisely that the correspondence
\begin{eqnarray*}
Tx\mapsto \dual{y}{x},\qquad x\in E,
\end{eqnarray*}
defines a continuous anti-linear functional from $\ran T\subseteq \hil$ to $\dupC$. The Riesz representation theorem yields  a vector $h\in\overline{\ran T}$ such that 
\begin{eqnarray*}
 \dual{y}{x}=\sip{h}{Tx}=\dual{T^*h}{x}, \qquad x\in E.
\end{eqnarray*}
Consequently, $y=T^*h\in\ran T^*$.
\end{proof}
\subsection{Linear relations in Hilbert spaces}\label{Sub:linrel}
If $A,B$ are positive operators on the anti-dual pair $\dual FE$ then we can associate  the auxiliary Hilbert spaces $\hila,\hilb$ with them along the procedure given in Subsection \ref{Sub:factorizepositive}. 
The vast majority of the results in this paper relies on some topological properties of a ``mapping'' sending $Ax$ of $\hila$ into $Bx$ of $\hilb$. In general, this map is not a function (and thus not a bounded operator). Such ``multivalued'' operators, i.e, linear subspaces  of a product Hilbert space $\hil\times\kil$ are called linear relations.  In this subsection we gather some basic notions and results of the theory of linear relations. For a comprehensive
treatment  on linear relations one may refer to  \cite{Arens} and \cite{Hassi2007}. 

A linear relation  between two Hilbert spaces $\hil$ and $\kil$ is a linear subspace $T$ of the Cartesian product $\hil\times\kil$. Accordingly, $T$ is called a closed linear relation if it is a closed linear subspace of $\hil\times\kil$. If $T$ is a linear operator from $\hil$ to $\kil$ then the graph of $T$ is a linear relation.  Conversely, a linear relation $T$ is (the graph of) an operator if and only if $(0,k)\in T$ implies $k=0$ for every $k\in\kil$. In other words, a linear relation $T$ is an operator if its multivalued part
\begin{equation*}
    \mul T\coloneqq \set{k\in\kil}{(0,k)\in T}
\end{equation*}
is trivial. The domain, kernel and range of a linear relation $T$, denoted by $\dom T$, $\ker T$ and $\ran T$, respectively, are defied in the obvious manner. A relation $T$ is called closable if its closure $\overline{T}$ is an operator, or equivalently, if
\begin{equation*}
\mul \overline{T}=\set{k\in K}{\exists (h_n,k_n)_{\nen} \subset  T, h_n\to0, k_n\to k}
\end{equation*}
is  trivial. In the sequel, we shall also need the concept of the adjoint of a linear relation. To this aim let us introduce the unitary operator 
\begin{equation*}
V(h,k)\coloneqq (-k,h),\qquad h\in\hil, k\in\kil
\end{equation*}
from $\hil\times \kil$ to $\kil\times\hil$. The adjoint of a linear relation $T\subseteq \hil\times\kil$ is  given by 
\begin{equation*}
    T^*\coloneqq [V (T)]^{\perp}=\set{(-k,h)}{(h,k)\in T}^{\perp},
\end{equation*}
that agrees with the original concept of the adjoint transformation introduced by J. von Neumann if $T$ is a densely defined operator. 
Observe immediately that $T^*$ is always  closed such that $T^{**}=\overline{T}.$  
For a pair of vectors $(g,f)\in\kil\times\hil$, relation $(g,f)\in T^*$ means that
\begin{equation}\label{E:adjoint_relation}
\sip{k}{g}=\sip{h}{f},\qquad \textrm{for all $(h,k)\in T$.}
\end{equation}
In a full analogy with the operator case, the domain of the adjoint relation consists of those vectors $g$ such that 
\begin{equation}\label{E:domT*}
\abs{\sip{k}{g}}^2\leq m_g \sip{h}{h}\qquad \textrm{for all $(h,k)\in T$}
\end{equation}
holds for some $m_g\geq0$. Furthermore, we have the following useful relations:
\begin{equation}\label{E:muldom}
\mul T^*=(\dom T)^{\perp}\qquad \textrm{\and}\qquad \overline{\dom T^*} =(\mul T^{**})^{\perp}.
\end{equation}
In particular, the adjoint of a densely defined relation is a closed operator and the adjoint of a closable operator is densely defined. 
Let $P$ denote the orthogonal projection of $\kil$ onto $\mul{\overline{T}}$. The regular part $T_\textrm{reg}$ of $T$ is defined to be the linear relation
\begin{eqnarray}\label{E:Treg}
T_\textrm{reg}\coloneqq \set{(h,(I-P)k)}{(h,k)\in T}.
\end{eqnarray} 
Actually, it can be proved that $T_\textrm{reg}$ is a closable operator   and its closure satisfies  
\begin{eqnarray}\label{E:Treg-lezart}
\overline{T_\textrm{reg}}=(\overline{T})_{\textrm{reg}},
\end{eqnarray}
see \cite{Hassi2007}*{Theorem 4.1 and  Proposition 4.5}. In particular, the regular part of a closed linear relation is itself closed. 
\section{Lebesgue decomposition theorem for positive operators}\label{Sec:3}

Modeled by the Lebesgue--Radon--Nikodym theory of positive operators on a Hilbert space (see e.g. \cite{Ando} or \cite{Tarcsay_Leb}) we can introduce the concepts of absolute continuity and singularity of positive operators on an anti-dual pair. Let $A$ and $B$ be positive operators on an anti-dual pair $\dual FE$. We say that $B$ is absolutely continuous with respect to $A$ (in notation, $A\ll B$) if for any sequence $\seq{x}$ of $E$,
 \begin{equation*}
    \dual{Ax_n}{x_n}\to0\qquad \textrm{and}\qquad \dual{B(x_n-x_m)}{x_n-x_m}\to0\qquad (n,m\to\infty)
 \end{equation*}
 imply $\dual{Bx_n}{x_n}\to0$. On the other hand, we say that $A$ and $B$ are mutually singular (in notation, $A\perp B$) if $C\leq A$ and $C\leq B$ imply $C=0$ for any positive operator $C\in\lef$.
 
The main purpose of this section is to establish an extension of Ando's Lebesgue decomposition theorem \cite{Ando}*{Theorem 1}. This states that every positive operator $B$ on a weak-* sequentially complete anti-dual pair admits a decomposition $B=B_a+B_s$ where $B_a\ll A$ and $B_s\perp A$. Before passing to the proof, let us make a few remarks.

The following constraction is analogous to the one developed in \cite{Tarcsay_Leb}. Let us consider  the Hilbert spaces $\hila, \hilb$ and the linear operators $J_A,J_B$, associated with $A$ and $B$, respectively. Introduce the closed linear relation
 \begin{equation}\label{E:Bkalap}
    \widehat{B}\coloneqq \overline{\set{(Ax,Bx)\in\hila\times\hilb}{x\in E}}
 \end{equation}
 from $\hila$ to $\hilb$, and denote its multivalued part  by $\M$:
\begin{equation*}
    \M\coloneqq \set{\xi\in\hilb}{(0,\xi)\in \widehat{B}}.
 \end{equation*}
 According to what has been said in Subsection \ref{Sub:linrel}, $\M$ is a closed linear subspace of $\hilb$ and
  one easily verifies that
 \begin{equation}\label{E:M}
    \M=\set{\xi\in\hilb}{\exists \seq{x}~\textrm{of $E$}, \dual{Ax_n}{x_n}\to0, Bx_n\to\xi~\textrm{in~$\hilb$}}.
 \end{equation}
 It is easy to check that $B\ll A$ if and only if $\widehat B$ is a closed operator, or equivalently, if $\M=\{0\}$. Furthermore, since $\ran A\subseteq \dom \widehat B$, the adjoint relation $\widehat{B}^*$ is always a single-valued operator from $\hilb$ to $\hila$ such that 
 \begin{equation}\label{E:domBperp=M}
     (\dom \widehat B^*)^\perp =\M.
 \end{equation}
 
The next lemma describes the domain of $\widehat B^*$:
 \begin{lemma}\label{L:domBkalap}
For a vector $\xi\in\hilb$ the following assertions are equivalent:
\begin{enumerate}[label=\textup{(\roman*)}, labelindent=\parindent]
 \item $\xi\in\dom \widehat{B}^*$,
 \item there exists $m_{\xi}\geq 0$ such that $\abs{\sipb{Bx}{\xi}}^2\leq m_{\xi}\dual{Ax}{x}$ for all $x\in E$,
 \item $J_B\xi\in\ran J_A$.
\end{enumerate}
In any case, 
\begin{equation}\label{E:JABkalap}
J_A\widehat{B}^*\xi=J_B\xi,\qquad \xi\in\dom \widehat{B}^*.
\end{equation}
\end{lemma}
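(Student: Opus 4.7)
The plan is to prove the chain of equivalences (i) $\Leftrightarrow$ (ii) $\Leftrightarrow$ (iii) and then deduce the identity \eqref{E:JABkalap}, by simply unfolding the two "adjoint-range" characterisations that the paper has already put in place, namely the domain description \eqref{E:domT*} for adjoint relations and the Douglas-type Lemma \ref{L:adjointrange} applied to $T=J_A^*$. The key computational ingredients I would repeatedly use are the identity $\sipa{Ax}{Ax}=\dual{Ax}{x}$ (which is the very definition of $\sipa{\cdot}{\cdot}$) and the canonical formulas \eqref{E:J_A} and \eqref{E:J*}, namely $J_A^*x=Ax\in\hila$ and analogously $J_B^*x=Bx\in\hilb$.

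For (i) $\Leftrightarrow$ (ii): since $\widehat B$ is the closure of $\{(Ax,Bx):x\in E\}$, continuity considerations show that $\xi$ belongs to $\dom \widehat{B}^*$ if and only if the scalar estimate \eqref{E:domT*} holds with pairs $(h,k)=(Ax,Bx)$, i.e.
\[
\abs{\sipb{Bx}{\xi}}^{2}\le m_\xi\,\sipa{Ax}{Ax}=m_\xi\,\dual{Ax}{x},\qquad x\in E.
\]
This is precisely condition (ii).

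For (ii) $\Leftrightarrow$ (iii): I would apply Lemma \ref{L:adjointrange} to the weakly continuous operator $T:=J_A^*\in\mathscr L(E,\hila)$, whose adjoint is $T^*=J_A\in\mathscr L(\hila,F)$. With the choice $y:=J_B\xi\in F$ the lemma gives $y\in\ran J_A$ if and only if $\abs{\dual{J_B\xi}{x}}^{2}\le m\,\|J_A^*x\|^{2}$ for every $x\in E$. Since $\|J_A^*x\|^{2}=\|Ax\|^{2}_{\hila}=\dual{Ax}{x}$ by the definition of $\sipa{\cdot}{\cdot}$, and since
\[
\dual{J_B\xi}{x}=\sipb{\xi}{J_B^*x}=\sipb{\xi}{Bx}=\overline{\sipb{Bx}{\xi}},
\]
this estimate is literally (ii). Hence (ii) $\Leftrightarrow$ (iii).

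Finally, for the identity \eqref{E:JABkalap}, fix $\xi\in\dom\widehat B^*$ and set $\eta:=\widehat{B}^*\xi\in\hila$. By the defining property \eqref{E:adjoint_relation} of the adjoint relation, applied to the dense collection $(Ax,Bx)\in\widehat B$, one has $\sipb{Bx}{\xi}=\sipa{Ax}{\eta}$ for all $x\in E$. Taking complex conjugates and rewriting both sides via \eqref{E:J*} yields
\[
\dual{J_B\xi}{x}=\overline{\sipb{Bx}{\xi}}=\overline{\sipa{Ax}{\eta}}=\sipa{\eta}{Ax}=\sipa{\eta}{J_A^*x}=\dual{J_A\eta}{x},\qquad x\in E.
\]
Since the anti-duality separates points of $F$, this forces $J_A\eta=J_B\xi$, which is \eqref{E:JABkalap}. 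The only mild subtlety I foresee is keeping the complex conjugations on the correct side when moving between the sesquilinear pairing and the various Hilbert inner products, but no deeper argument is needed beyond the two lemmas already at hand.
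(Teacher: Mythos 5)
Your proposal is correct and follows essentially the same route as the paper: (i)$\Leftrightarrow$(ii) via the domain description \eqref{E:domT*} applied to the dense set of pairs $(Ax,Bx)$, (ii)$\Leftrightarrow$(iii) via Lemma \ref{L:adjointrange} with $T=J_A^*$, and the identity \eqref{E:JABkalap} by the same chain $\dual{J_A\widehat{B}^*\xi}{x}=\sipa{\widehat{B}^*\xi}{Ax}=\sipb{\xi}{Bx}=\dual{J_B\xi}{x}$ together with the separation property of the anti-duality. You merely spell out the conjugation bookkeeping that the paper leaves implicit; no substantive difference.
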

\begin{proof}
The equivalence between (i) and (ii) is clear due to  \eqref{E:domT*} and   the equivalence between (ii) and (iii) follows from  Lemma \ref{L:adjointrange}. Finally, for $\xi\in\dom \widehat{B}^*$  and $x\in E$ we have 
\begin{align*}
\dual{J_A\widehat{B}^*\xi}{x}&=\sipa{\widehat{B}^*\xi}{Ax}=\sipb{\xi}{Bx}=\dual{J_B\xi}{x},
\end{align*}
that proves \eqref{E:JABkalap}.
\end{proof}
Let   $P$ stand for the orthogonal projection of $\hilb$ onto $\M$ and set
\begin{equation}\label{E:Bhat}
    \widehat{B}_{\textup{reg}}\coloneqq \set{(\zeta,(I-P)\xi)}{(\zeta,\xi)\in \widehat{B}}.
 \end{equation}
 Since $\widehat{B}_{\textup{reg}}$ is the regular part \eqref{E:Treg} of $\widehat{B}$,  \cite{Hassi2007}*{Theorem 1} and identity \eqref{E:Treg-lezart} yield the following result:
 \begin{proposition}\label{P:closable}
  $\Breg$ is a densely defined closed linear operator between $\hila$ and $\hilb$ such that 
  \begin{equation*}
     \widehat{B}_{\textup{reg}}=\overline{\set{(Ax,(I-P)(Bx))}{x\in E}}.
 \end{equation*}
 \end{proposition}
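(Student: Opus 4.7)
The plan is to recognize that Proposition~\ref{P:closable} is essentially a direct transcription of the abstract theory of regular parts of linear relations recalled at the end of Subsection~\ref{Sub:linrel}, once the notations are matched. The key observation is that the relation $\widehat B$ introduced in \eqref{E:Bkalap} is closed by its very definition, and that the closed subspace $\M$ appearing in \eqref{E:Bhat} is exactly $\mul \widehat B$ in view of \eqref{E:M} and \eqref{E:domBperp=M}. Consequently the relation $\Breg$ defined in \eqref{E:Bhat} literally coincides with the regular part $(\widehat B)_{\textup{reg}}$ in the sense of \eqref{E:Treg}.

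With this identification made, I would first invoke the result quoted around \eqref{E:Treg-lezart} (from \cite{Hassi2007}), namely that the regular part of a linear relation $T$ is closable and satisfies $\overline{T_{\textup{reg}}}=(\overline{T})_{\textup{reg}}$. Applied to $T=\widehat B$, which is already closed, this yields $\overline{\Breg}=(\overline{\widehat B})_{\textup{reg}}=\Breg$, so $\Breg$ is a closed operator. (Single-valuedness also admits a one-line direct verification: if $(0,(I-P)\xi)\in \Breg$ then $(0,\xi)\in \widehat B$, hence $\xi\in \mul\widehat B=\M=\ran P$, whence $(I-P)\xi=0$.) Dense definiteness is then immediate: $\dom\Breg=\dom\widehat B$ by \eqref{E:Bhat}, and $\dom\widehat B\supseteq \ran A$, which is norm-dense in $\hila$ by the construction in Subsection~\ref{Sub:factorizepositive}.

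For the explicit closure formula, I would set $T_0:=\set{(Ax,Bx)\in\hila\times\hilb}{x\in E}$, so that $\widehat B=\overline{T_0}$ by \eqref{E:Bkalap}. Since $\mul\overline{T_0}=\mul\widehat B=\M$, the projection $P$ used in forming the regular part is the same one for $T_0$ and for $\widehat B$, giving
\[
(T_0)_{\textup{reg}}=\set{(Ax,(I-P)(Bx))}{x\in E}.
\]
A second application of $\overline{T_{\textup{reg}}}=(\overline{T})_{\textup{reg}}$, this time to $T=T_0$, yields $\overline{(T_0)_{\textup{reg}}}=(\overline{T_0})_{\textup{reg}}=\Breg$, which is precisely the desired representation of $\Breg$ as the closure of an explicit set.

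The only genuinely non-trivial input is the fact that taking the regular part commutes with closure, which is exactly what is encapsulated by the cited Theorem~4.1 and Proposition~4.5 of \cite{Hassi2007}; every remaining step is bookkeeping, so no real obstacle is expected.
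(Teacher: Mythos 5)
Your proof is correct and takes essentially the same route as the paper, which likewise obtains the proposition as an immediate consequence of the cited results of \cite{Hassi2007} on regular parts, applying $\overline{T_{\textup{reg}}}=(\overline{T})_{\textup{reg}}$ both to the closed relation $\widehat{B}$ (giving closedness of $\Breg$) and to the non-closed relation $\set{(Ax,Bx)}{x\in E}$ (giving the closure formula). Your additional verifications of single-valuedness, dense definiteness via $\ran A\subseteq\dom\widehat{B}$, and the coincidence of the projections are precisely the bookkeeping the paper leaves implicit.
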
\label{}
 We can now prove our main result that establishes a Lebesgue-type decomposition theorem for positive operators on a weak-* sequentially complete anti-dual pair:
 \begin{theorem}\label{T:main_Lebdecomp}
    Let $A,B$ be positive operators on a weak-* sequentially complete  anti-dual pair $\dual FE$. Let $P$ stand for the the orthogonal projection of $\hilb$ onto $\M$, then
     \begin{equation}\label{E:Ba_Bs}
        B_a\coloneqq J_B^{}(I-P)J_B^*\qquad \textrm{and} \qquad B_s\coloneqq J_BPJ_B^*
     \end{equation}
     are positive operators such that $B=B_a+B_s$, $B_a$ is $A$-absolutely continuous and $B_s$ is $A$-singular.  Furthermore, $B_a$ is the greatest element of the set of those positive operators $C\in\lef$ such that $C\leq B$ and $C\ll A$.
      \begin{figure}[H]
\begin{tikzpicture}
\draw (10,0) node (A) {$E$};
\draw (13,0) node (B) {$F$};
\draw (10,2) node (C) {$\hilb$};
\draw (13,2) node (D) {$\M^\perp$};
\draw[thick,-latex] (A) -- (B)
 node[pos=0.5,below] {$B_a$};
\draw[thick,-latex] (C) -- (D)
 node[pos=0.5,above] {$I-P$};
\draw[thick,-latex] (A) -- (C)
 node[pos=0.5,left] {$J_B^*$};
\draw[thick,-latex] (D) -- (B)
 node[pos=0.5,right] {$J_B$};
\end{tikzpicture}
\caption{Factorization of the absolute continuous part}
\end{figure}
 \end{theorem}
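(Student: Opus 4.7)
My starting point is the factorization $B=J_B^{}J_B^*$ from Subsection~\ref{Sub:factorizepositive}, which is available because $\dual FE$ is weak-$*$ sequentially complete. Since $I-P$ and $P$ are orthogonal projections on $\hilb$, the identity $I=(I-P)+P$ immediately yields $B=B_a+B_s$, and the formulas $B_a=[(I-P)J_B^*]^*[(I-P)J_B^*]$ and $B_s=[PJ_B^*]^*[PJ_B^*]$ make positivity transparent; note also that $B_a\leq B$.

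For $B_a\ll A$ I would recast the hypotheses $\dual{Ax_n}{x_n}\to 0$ and $\dual{B_a(x_n-x_m)}{x_n-x_m}\to 0$ as $Ax_n\to 0$ in $\hila$ and $(I-P)Bx_n$ Cauchy (hence convergent to some $\eta$) in $\hilb$. Since $(Ax_n,(I-P)Bx_n)$ lies in the closed \emph{operator} $\Breg$ of Proposition~\ref{P:closable}, the limit pair $(0,\eta)$ is in $\Breg$ as well, forcing $\eta=0$. This gives $\dual{B_ax_n}{x_n}=\|(I-P)Bx_n\|^2\to 0$, as required.

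For both remaining claims the common step is to represent a positive $C\leq B$ via Corollary~\ref{C:factor}: there is a positive contraction $D\in\mathscr B(\hilb)$ with $C=J_B^{}DJ_B^*$. For singularity of $B_s$, assume also $C\leq A$ and $C\leq B_s$; for each $\xi\in\M$ I would pick an approximating sequence $(x_n)$ from \eqref{E:M} and take the limit in $\dual{Cx_n}{x_n}\leq\dual{Ax_n}{x_n}$ to obtain $\sipb{D\xi}{\xi}=0$, hence $D|_\M=0$. Separately, $C\leq B_s$ combined with the density of $\ran J_B^*=\ran B$ in $\hilb$ upgrades $J_B^{}DJ_B^*\leq J_B^{}PJ_B^*$ to the operator inequality $D\leq P$ on all of $\hilb$, so $D|_{\M^\perp}=0$ as well; together these give $D=0$ and thus $C=0$. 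For maximality, suppose $C\ll A$; for $\xi\in\M$ with approximating $(x_n)$ the sequence $Bx_n$ is Cauchy, so $\dual{C(x_n-x_m)}{x_n-x_m}=\|D^{1/2}B(x_n-x_m)\|^2\to 0$ while $\dual{Ax_n}{x_n}\to 0$; absolute continuity of $C$ then forces $\|D^{1/2}Bx_n\|\to 0$, whence $D^{1/2}\xi=0$. Thus $D$ annihilates $\M$, so $D=(I-P)D(I-P)\leq I-P$ (using $\|D\|\leq 1$ and self-adjointness), and another density argument yields $C\leq B_a$.

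The main obstacle I anticipate is the singularity step: one must extract $D=0$ from two rather weak one-sided inequalities, which forces a careful combination of the variational description \eqref{E:M} of $\M$ with the density of $\ran B$ in $\hilb$ (the latter is what lifts the operator inequality $D\leq P$ off $\ran B$ to the whole Hilbert space). Everything else reduces to bookkeeping with adjoints and the canonical construction of Subsection~\ref{Sub:factorizepositive}.
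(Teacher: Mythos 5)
Your proof is correct, and three of its four steps coincide with the paper's own argument: the decomposition and positivity are read off from $B=J_B^{}J_B^*$ exactly as in the text; absolute continuity of $B_a$ is derived, as in the paper, from the identity $\dual{B_ax}{x}=\sipb{\Breg(Ax)}{\Breg(Ax)}$ together with Proposition \ref{P:closable} (closedness of the operator $\Breg$ forces the limit $\eta$ of $(I-P)Bx_n$ to vanish); and your maximality argument --- factor $C=J_B^{}DJ_B^*$ by Corollary \ref{C:factor}, show $D^{1/2}$ annihilates $\M$ using the approximating sequences of \eqref{E:M} together with the $A$-absolute continuity of $C$, then bound $\dual{Cx}{x}$ by $\dual{B_ax}{x}$ --- is the paper's, with $D=(I-P)D(I-P)\leq I-P$ replacing the paper's equivalent formulation $\widetilde C^{1/2}P=0$. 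The one genuine divergence is the singularity of $B_s$. The paper deduces it from maximality in two lines: if $C\leq A$ and $C\leq B_s$, then $C+B_a\leq B$ and $C+B_a\ll A$ (since $C\leq A$ trivially gives $C\ll A$, and a sum of $A$-absolutely continuous operators is again $A$-absolutely continuous), so maximality yields $C+B_a\leq B_a$, i.e. $C=0$. You instead argue directly on the representing contraction: $C\leq A$ gives $\sipb{D\xi}{\xi}=0$ for $\xi\in\M$ by passing to the limit along the sequences of \eqref{E:M} (legitimate, since $Bx_n\to\xi$ and $D$ is bounded), while $C\leq B_s$ plus density of $\ran B$ in $\hilb$ upgrades the form inequality to $D\leq P$, killing $D$ on $\M^\perp$ as well; hence $D=0$ and $C=0$. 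Both routes are sound. The paper's is shorter and exhibits singularity as a formal corollary of maximality; yours is self-contained, avoids the (easy but unstated) closure of absolute continuity under sums, and makes the geometric mechanism visible --- the contraction $D$ is squeezed to zero on the two complementary subspaces $\M$ and $\M^\perp$.
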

 \begin{proof}
    It is clear that  $B_a,B_s\in\lef$ are positive operators such that $B=B_a+B_s$. In order to prove  absolute continuity of $B_a$, we observe  that
    \begin{align*}
        \dual{B_ax}{x}&=\sipb{(I-P)J_B^*x}{J_B^*x}=\sipb{(I-P)(Bx)}{(I-P)(Bx)}\\
                      &=\sipb{\Breg(Ax)}{\Breg(Ax)},
    \end{align*}
    for $x\in E$, hence $B_a$ is $A$-absolutely continuous according to Proposition \ref{P:closable}.

    Our next claim is to show the maximality of $B_a$. Let us consider  a positive operator $C\in\lef$ such that $C\leq B$ and $C\ll A$. By Corollary \ref{C:factor}, there is a unique positive operator $\widetilde{C}\in\mathscr{B}(\hilb)$, $\|\widetilde{C}\|\leq1$, such that $C=J_B\widetilde{C}J_B^*$. In particular we have 
    \begin{equation*}
        \dual{Cx}{y}=\sipb{\widetilde{C}^{1/2}(Bx)}{\widetilde{C}^{1/2}(By)},\qquad x,y\in E.
    \end{equation*}
    We claim that
    \begin{equation}\label{E:kerCkalap}
        \M\subseteq \ker\widetilde{C}.
    \end{equation}
    For let $\xi\in\M$ and consider a sequence $\seq{x}$ of $E$ such that
    \begin{equation*}
        \sipa{Ax_n}{Ax_n}\to 0\qquad \textrm{and}\qquad Bx_n\to \xi\in\hilb.
    \end{equation*}
    By continuity, $\widetilde{C}^{1/2}(Bx_n)\to\widetilde{C}^{1/2}\xi$, and by  $A$-absolute continuity,
    \begin{equation*}
    \sipb{\widetilde{C}^{1/2}(Bx_n)}{\widetilde{C}^{1/2}(Bx_n)}=\dual{Cx_n}{x_n}\to0,
    \end{equation*}
    whence $\widehat{C}\xi=0$. This proves \eqref{E:kerCkalap}. Let now $x\in E$. By \eqref{E:kerCkalap},  $\widetilde{C}^{1/2}P=0$. Consequently,
    \begin{align*}
        \dual{Cx}{x}&=\sipb{\widetilde{C}^{1/2}(Bx)}{\widetilde{C}^{1/2}(Bx)}=\sipb{\widetilde{C}^{1/2}(I-P)(Bx)}{\widetilde{C}^{1/2}(I-P)(Bx)}\\
        &\leq \sipb{(I-P)(Bx)}{(I-P)(Bx)}=\dual{J_B^{}(I-P)J_B^*x}{x}\\
        &=\dual{B_ax}{x},
    \end{align*}
    whence $C\leq B_a$, as it is stated.

    Finally, in order to prove that that $B_s$ and $A$ are mutually singular, let  $C\in\lef$ be a positive operator such that $C\leq A$ and $C\leq B_s$. Then $C+B_a\leq B$ so that $C+B_a$ is $A$-absolutely continuous. By the maximality of $B_a$ we conclude that $C+B_a\leq B_a$, i.e., $C=0$.
 \end{proof}
\begin{remark}
Observe that
\begin{align*}
\Breg J_A^*x=(I-P)Bx=(I-P)J_B^*x
\end{align*}
 for any $x$ in $E$, whence we obtain yet another  useful factorization of the absolute continuous part:
\begin{equation}\label{E:BJABJA}
 B_a=(\widehat{B}_{\textup{reg}}J_A^*)^*(\widehat{B}_\textup{reg}J_A^*).
\end{equation} 
\end{remark}
We close the section with an interesting property of the absolute continuous part. Suppose that $A,B$ are positive operators and let $B=B_a+B_s$ be the Lebesgue decomposition of $B$ with respect to $A$ in virtue of to Theorem \ref{T:main_Lebdecomp}, i.e., 
\begin{equation*}
    B_a=J_B^{}(I-P)J_B^*
\end{equation*} 
and $B_s=J_BPJ_B^*$. Here we have  $B_a\ll A$. Interchanging the roles of $A$ and $B$, by the same process we may take the Lebesgue decomposition of $A$ with respect to $B$, namely, $A=A_a+A_s$. We shall prove that the absolutely continuous parts $A_a$ and  $B_a$ are absolutely continuous with respect to each other, i.e., $B_a\ll A_a$ and $A_a\ll B_a$. This surprising property was discovered by T. Titkos  in context of nonnegative forms \cite{Titkos_content} and measures \cite{Titkos_howto}. Theorem \ref{T:mutualabs} below is not only a generalization of this fact, it also reproves these results with a completely different technique.
\begin{lemma}
Let $\hil$ and $\kil$ be Hilbert spaces and let $T$ be a closed linear relation between them and denote by $P_T$ and $Q_T$ the orthogonal projections onto $\mul T$ and $\ker T$, respectively. Then 
\begin{equation*}
    S\coloneqq \set{((I-Q_T)\xi,(I-P_T)\eta)}{(\xi,\eta)\in T}
\end{equation*}
is (the graph of) a one-to-one closed operator.
\end{lemma}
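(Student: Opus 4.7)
The plan is to exhibit $S$ as an intersection, namely
\begin{equation*}
S = T \cap \bigl((\ker T)^{\perp} \times (\mul T)^{\perp}\bigr),
\end{equation*}
and then read off single-valuedness, injectivity, and closedness directly from this description. I would first note that $\ker T$ and $\mul T$ are closed subspaces of $\hil$ and $\kil$: since $T$ is closed, so is $T \cap (\hil \times \{0\})$, and projecting onto the first factor (a homeomorphism on this closed subspace of $\hil \times \kil$) yields $\ker T$; the argument for $\mul T$ is analogous. Hence $P_T$ and $Q_T$ are bona fide orthogonal projections.

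The main step is the identification above. For the inclusion $\subseteq$, given $(\xi,\eta)\in T$, the vectors $Q_T\xi\in\ker T$ and $P_T\eta\in\mul T$ satisfy $(Q_T\xi,0)\in T$ and $(0,P_T\eta)\in T$; subtracting these from $(\xi,\eta)$ shows $((I-Q_T)\xi,(I-P_T)\eta)\in T$, and by construction this pair lies in $\ran(I-Q_T)\times\ran(I-P_T)=(\ker T)^{\perp}\times(\mul T)^{\perp}$. For $\supseteq$, if $(\xi,\eta)\in T$ with $\xi\perp\ker T$ and $\eta\perp\mul T$, then $Q_T\xi=0$ and $P_T\eta=0$, so $(\xi,\eta)=((I-Q_T)\xi,(I-P_T)\eta)\in S$. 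I expect this bookkeeping to be the only delicate piece; it is short but requires paying attention to which of the two vectors of $T$ one is projecting away.

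Closedness of $S$ is then automatic: $T$ is closed by hypothesis, the product $(\ker T)^{\perp}\times(\mul T)^{\perp}$ is closed as a Cartesian product of orthogonal complements of closed subspaces, and the intersection of two closed subspaces of $\hil\times\kil$ is closed.

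Finally, the two algebraic properties drop out of the same identity. If $(0,\eta')\in S$, then on one hand $\eta'\in(\mul T)^{\perp}$ and on the other $(0,\eta')\in T$ forces $\eta'\in\mul T$, so $\eta'=0$; this gives single-valuedness. Symmetrically, if $(\xi',0)\in S$, then $\xi'\in(\ker T)^{\perp}$ while $(\xi',0)\in T$ means $\xi'\in\ker T$, so $\xi'=0$; this gives injectivity. Thus $S$ is a one-to-one closed operator, as claimed.
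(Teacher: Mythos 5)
Your proof is correct, and it takes a genuinely different route from the paper's. The paper deduces the lemma from the regular-part calculus of linear relations \cite{Hassi2007}: it notes $\ker T=\mul T^{-1}$ and $\ker T=\ker T_{\textup{reg}}$, identifies $S=(((T_{\textup{reg}})^{-1})_{\textup{reg}})^{-1}$, and then invokes the nontrivial external fact that the regular part of a closed relation is a closed operator (cf.\ \eqref{E:Treg} and \eqref{E:Treg-lezart}). You instead establish the structural identity $S=T\cap\bigl((\ker T)^{\perp}\times(\mul T)^{\perp}\bigr)$ directly; your key step — that $(Q_T\xi,0)\in T$ and $(0,P_T\eta)\in T$, so subtracting both from $(\xi,\eta)$ stays inside the subspace $T$ — is exactly right, and then closedness (intersection of closed subspaces), single-valuedness ($\mul S\subseteq\mul T\cap(\mul T)^{\perp}=\{0\}$) and injectivity ($\ker S\subseteq\ker T\cap(\ker T)^{\perp}=\{0\}$) all fall out at once. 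What each approach buys: yours is elementary and self-contained, requiring nothing beyond the definitions, and it exhibits $S$ transparently as the ``regular corner'' of $T$; the paper's is a one-line consequence of machinery it already uses elsewhere, and the double-regularization formula makes the symmetry between $S$ and $S^{-1}$ explicit, which is what the proof of Theorem \ref{T:mutualabs} exploits. Note, though, that your identity has the same symmetry built in: inverting it gives $S^{-1}=T^{-1}\cap\bigl((\mul T)^{\perp}\times(\ker T)^{\perp}\bigr)$, since $\ker T=\mul T^{-1}$ and $\mul T=\ker T^{-1}$, so the closedness of $S^{-1}$ needed later is equally immediate from your argument.
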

\begin{proof} 
It is easy to see that  $\ker T=\mul T^{-1}$ and that $\ker T=\ker T_{\textup{reg}}$, furthermore the regular part of a closed linear relation is closed itself, hence 
\begin{equation*}
    S=(((T_{\textup{reg}})^{-1})_{\textup{reg}})^{-1}
\end{equation*}
is a one-to-one closed operator.
\end{proof}
\begin{theorem}\label{T:mutualabs}
Let $\dual FE$ be a weak-* sequentially anti-dual pair and  let $A,B\in\lef$ be positive operators. Then we have 
\begin{equation*}
    A_a\ll B_a \qquad\mbox{and}\qquad B_a\ll A_a.
\end{equation*}
\end{theorem}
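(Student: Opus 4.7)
The plan is to leverage the closed one-to-one operator $S$ furnished by the preceding lemma, applied with $T = \widehat{B}$. First observe that, by \eqref{E:Bkalap} together with the fact that the flip map is a homeomorphism $\hila \times \hilb \to \hilb \times \hila$, the relation $\widehat{A}$ (constructed from $A$ with respect to $B$ by the same recipe) is simply $\widehat{B}^{-1}$. In particular $\mul \widehat{A} = \ker \widehat{B}$. Letting $Q$ denote the orthogonal projection of $\hila$ onto this common subspace, Theorem \ref{T:main_Lebdecomp} applied with the roles of $A$ and $B$ interchanged yields $A_a = J_A(I-Q)J_A^*$, and together with \eqref{E:J*} this gives
\begin{equation*}
    \dual{A_a x}{x} = \|(I-Q)Ax\|_{\hila}^2, \qquad \dual{B_a x}{x} = \|(I-P)Bx\|_{\hilb}^2, \qquad x \in E.
\end{equation*}

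Applying the preceding lemma to $T = \widehat{B}$, the set
\begin{equation*}
    S = \set{((I-Q)\xi,(I-P)\eta)}{(\xi,\eta)\in\widehat{B}}
\end{equation*}
is the graph of a one-to-one closed linear operator from a subspace of $\hila$ to $\hilb$. The crucial consequence is that, because $(Ax,Bx)\in\widehat{B}$ for every $x\in E$, the vector $(I-Q)Ax$ lies in $\dom S$ with
\begin{equation*}
    S\bigl((I-Q)Ax\bigr) = (I-P)Bx, \qquad x\in E.
\end{equation*}

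To prove $A_a \ll B_a$, suppose $(x_n) \subset E$ satisfies $\dual{B_a x_n}{x_n} \to 0$ and $\dual{A_a(x_n-x_m)}{x_n-x_m} \to 0$. By the displayed identities the first condition says $(I-P)Bx_n \to 0$ in $\hilb$, while the second says that $((I-Q)Ax_n)$ is a Cauchy sequence in $\hila$; let $\xi \in \hila$ denote its limit. Then $((I-Q)Ax_n,(I-P)Bx_n) \to (\xi,0)$, and since $S$ is closed this forces $(\xi,0)\in S$; injectivity of $S$ then gives $\xi=0$. Hence $\dual{A_a x_n}{x_n} \to 0$, which is $A_a \ll B_a$. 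The reverse statement $B_a \ll A_a$ follows by applying verbatim the same argument to $S^{-1}$ (which is closed and one-to-one along with $S$), or equivalently by running the whole scheme with $\widehat{A} = \widehat{B}^{-1}$ in place of $\widehat{B}$.

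The one delicate point is having the lemma on $S$ in hand — it is what turns the formal correspondence $Ax \leftrightarrow Bx$ into an honest closed injection between the appropriate orthogonal complements. Once that is granted, the remainder is a clean closed-graph-style deduction, and the only bookkeeping is the identification of the defect quantities $\dual{A_a x}{x}$ and $\dual{B_a x}{x}$ with $\|(I-Q)Ax\|^2$ and $\|(I-P)Bx\|^2$, which is immediate from the factorizations of $A_a$ and $B_a$ produced by Theorem \ref{T:main_Lebdecomp}.
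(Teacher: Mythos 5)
Your proposal is correct and takes essentially the same route as the paper's own proof: both identify $A_a=J_A^{}(I-Q)J_A^*$ through the observation that $\widehat{A}=\widehat{B}^{-1}$ (so that $\ker\widehat{B}=\mul\widehat{B}^{-1}$), and both then deduce the two absolute continuities from the preceding lemma's closed one-to-one operator $S$, using single-valuedness of $S$ for one direction and injectivity (i.e., $S^{-1}$ being a closable operator) for the other. The only difference is cosmetic --- you spell out in detail the direction $A_a\ll B_a$ while the paper details $B_a\ll A_a$ --- so there is nothing to add.
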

\begin{proof}
Let us continue to write $P$ for the orthogonal projection onto $\mul \widehat{B}$ and let $Q$ be the orthogonal projection onto $\ker \widehat{B}$. Then, according to the preceding lemma, 
\begin{equation*}
    S\coloneqq \set{((I-Q)\xi,(I-P)\eta)}{(\xi,\eta)\in\widehat{B}}
\end{equation*}
is (the graph of) a one-to-one closed linear operator from $\hila$ to $\hilb$. Since we have $\ker\widehat{B}=\mul \widehat{B}^{-1}$, it follows that 
\begin{equation*}
    B_a=J^{}_B(I-P)J_B^*,\qquad A_a=J_A^{}(I-Q)J_A^*.
\end{equation*}
Consider a sequence $\seq x$ in $E$ such that $\dual{A_ax_n}{x_n}\to0$ and $\dual{B_a(x_n-x_m)}{x_n-x_m}\to0$, then $(I-Q)Ax_n\to0$ in $\hila$ and $(I-Q)Bx_n\to\eta$ for some $\eta\in\hilb$. Since $S$ is closable it follows that $\eta=0$ and hence that $\dual{B_ax_n}{x_n}\to0$, thus $B_a\ll A_a$. A very similar reasoning shows that  $A_a\ll B_a$, but this time the closability of $S^{-1}$ is used. 
\end{proof}
\begin{remark}
We have only proved that the canonical absolute continuous parts $A_a$ and $B_a$ have the property of being mutually absolute continuous. As we shall see, the Lebesgue decomposition is not unique in general, so there might exist other Lebesgue-type decompotitions differing from what we  have constructed in Theorem \ref{T:main_Lebdecomp}. The statement of Theorem \ref{T:mutualabs} is certainly  not true for the absolutely continuous parts of such  Lebesgue decompositions.
\end{remark}
 \section{The parallel sum }
 Ando's key notion  in   establishing his Lebesgue-type decomposition theorem  was the so called parallel sum of two positive operators. Inspired by his treatment, Hassi, Sebesty\'en, and de Snoo \cite{Hassi2009} proved an analogous result for nonnegative Hermitian forms by means of the parallel sum as well. Parallel addition may also be defined in various areas of functional analysis, e.g. for   measures, representable positive functionals on a $^*$-algebra, and for positive operators from a Banach space to its topological anti-dual,  see \cites{Szucs_abs, Tarcsay_parallel, Titkos_means}. In what follows we  provide a common generalization of those concepts.  
 
 The parallel sum $A:B$ of two bounded positive operators on a Hilbert space can be introduced in various ways, see eg. \cites{AndersonDuffin, fillmore, Pekarev, Kosaki2018}, cf. also \cites{Djikic, Antezena}. 
 Its quadratic form can be obtained via the formula 
  \begin{equation}\label{E:A:B-Hilbert}
    \sip{(A:B)x}{x}=\inf\set{\sip{A(x-y)}{x-y}+\sip{By}{y}}{y\in\hil}, 
 \end{equation}
 that uniquely determines the operator $A:B$. Therefore, it seems  natural  to introduce  the parallel sum of two positive operators in the anti-dual pair context as an operator whose  quadratic form is \eqref{E:A:B-Hilbert} (the inner product replaced by anti-duality, of course). 
 
 The existence of such an operator is established in the following result:
\begin{theorem}
    Let $\dual FE$ be a weak-* sequentially complete anti-dual and let $A,B\in\lef$ be  positive operators.  There exists a unique positive operator $A:B\in\lef$, called the parallel sum of $A$ and $B$, such that 
    \begin{equation}\label{E:parallel}
    \dual{(A:B)x}{x}=\inf\set{\sip{A(x-y)}{x-y}+\sip{By}{y}}{y\in E},\qquad x\in E.
 \end{equation}
\end{theorem}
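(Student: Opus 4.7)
The plan is to encode the infimum on the right-hand side of \eqref{E:parallel} as the squared distance from a point to a closed subspace inside the product Hilbert space $\hila\oplus\hilb$, and to realise $A:B$ as $\Phi^*\Phi$ for a suitable weakly continuous operator $\Phi\colon E\to\hila\oplus\hilb$.

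Using the factorizations $A=J_A^{}J_A^*$ and $B=J_B^{}J_B^*$ from Subsection~\ref{Sub:factorizepositive}, I would first note the identity
\begin{equation*}
\dual{A(x-y)}{x-y}+\dual{By}{y}=\bigl\|(J_A^*x,0)-(J_A^*y,-J_B^*y)\bigr\|^2_{\hila\oplus\hilb},\qquad x,y\in E.
\end{equation*}
Let $\mathcal K$ denote the closure in $\hila\oplus\hilb$ of the linear subspace $\{(J_A^*y,-J_B^*y):y\in E\}$ and let $P$ be the orthogonal projection onto $\mathcal K$. The Hilbert space projection theorem then gives
\begin{equation*}
\inf_{y\in E}\bigl\{\dual{A(x-y)}{x-y}+\dual{By}{y}\bigr\}=\bigl\|(I-P)(J_A^*x,0)\bigr\|^2,\qquad x\in E.
\end{equation*}

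Next I would define $\Phi x:=(I-P)(J_A^*x,0)$ for $x\in E$. Being the composition of the weakly continuous map $x\mapsto(J_A^*x,0)$ with the bounded (hence weak-weak continuous) Hilbert space operator $I-P$, $\Phi\colon E\to\hila\oplus\hilb$ is weakly continuous. Its adjoint $\Phi^*\in\mathscr L(\hila\oplus\hilb,F)$ therefore exists, and $A:B:=\Phi^*\Phi$ is a positive operator in $\lef$ whose quadratic form satisfies
\begin{equation*}
\dual{(A:B)x}{x}=\|\Phi x\|^2=\inf_{y\in E}\bigl\{\dual{A(x-y)}{x-y}+\dual{By}{y}\bigr\},
\end{equation*}
as required. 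Uniqueness is immediate from the polarization identity: any symmetric operator in $\lef$ is determined by its quadratic form, since the anti-duality separates points.

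The main (and essentially only) non-trivial step is the reformulation: recognising that $\dual{A(x-y)}{x-y}+\dual{By}{y}$ is precisely the squared $\hila\oplus\hilb$-norm of $(J_A^*x,0)-(J_A^*y,-J_B^*y)$. Once this identification is in hand, existence reduces to the Hilbert space projection theorem, and the anti-dual-pair machinery of Subsection~\ref{Sub:factorizepositive} (whose availability rests on the weak-$*$ sequential completeness hypothesis) converts the resulting quadratic form on $E$ into a genuine operator in $\lef$.
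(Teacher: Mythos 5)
Your proposal is correct and takes essentially the same route as the paper: writing $\Phi=(I-P)\circ(x\mapsto(J_A^*x,0))$, your operator $\Phi^*\Phi$ coincides with the paper's $A:B=V_A^{}QV_A^*$, where $V_A^*x=(Ax,0)$ and $Q$ is the orthogonal projection onto $\widehat{B}^{\perp}$ (your subspace $\mathcal K$ is just $\widehat{B}$ transported by the unitary sign flip in the second coordinate, which fixes $(J_A^*x,0)$ and so leaves all distances unchanged). The only difference is cosmetic, plus your explicit polarization argument for uniqueness, which the paper leaves implicit.
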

\begin{proof}
    Let us consider the product Hilbert space $\hila\times\hilb$ and the weakly continuous operator $V_A:\hila\times\hilb\to F$ arising from the densely defined one 
    \begin{equation*}
         V_A(Ax,By)\coloneqq Ax, \qquad x,y\in E.
    \end{equation*}
    A straightforward calculation shows that the  adjoint $V_A^*\in\mathscr{L}(E;\hila\times\hilb)$ fulfills
   \begin{equation}\label{E:V_A*}
              V_A^*x=(Ax,0)\in\hila\times\hilb,\qquad x\in E.
  \end{equation}
   Consider the orthogonal projection $Q$ of $\hila\times\hilb$ onto $\widehat{B}^{\perp}$. The positive operator $V_A^{}QV_A^*\in \lef$ satisfies then
   \begin{align*}
    \dual{V_A^{}QV_A^*x}{x}&=\|Q(Ax,0)\|^2_{\hila\times\hilb}=\dist^2((Ax,0),\widehat{B})\\
                        &=\inf\set[\big]{\|(Ax,0)-(Ay,By)\|_{\hila\times\hilb}^2}{y\in E}\\
                          &=\inf\set{\sipa{A(x-y)}{A(x-y)}+\sipb{By}{By}}{y\in E}\\
                        &=\inf\set{\dual{A(x-y)}{x-y}+\dual{By}{y}}{y\in E}.
   \end{align*}
    Hence $A:B\coloneqq V_A^{}QV_A^*$ fulfills \eqref{E:parallel}.
\end{proof}
In the next proposition we collected some basic properties of parallel addition:
\begin{proposition}\label{P:propertiesA:B}
Let $\dual FE$ be a weak-* sequentially complete anti-dual pair and let $A,B\in\lef$ be positive operators. Then
\begin{enumerate}[label=\textup{(\alph*)}]
    \item $A:B=B:A$,
    \item $A:B\leq A$ and $A:B\leq B$,
    \item $A_1\leq A_2$ and $B_1\leq B_2$ imply $A_1:B_1\leq A_2: B_2$.
\end{enumerate}
\end{proposition}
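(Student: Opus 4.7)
The plan is to read all three claims straight off of the infimum formula \eqref{E:parallel}, using only that a positive operator on an anti-dual pair is uniquely determined by its quadratic form (via polarization, since $\dual{\cdot}{\cdot}$ is sesquilinear and the operator is symmetric).

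For (a), I would observe that the substitution $y \mapsto x - y$ is a bijection of $E$ onto itself. Under this substitution the expression $\dual{A(x-y)}{x-y} + \dual{By}{y}$ becomes $\dual{Ay}{y} + \dual{B(x-y)}{x-y}$, so the infimum in \eqref{E:parallel} is unchanged if $A$ and $B$ are swapped. Hence $\dual{(A:B)x}{x} = \dual{(B:A)x}{x}$ for every $x$, and by uniqueness $A:B = B:A$.

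For (b), I would plug in two distinguished test vectors. Setting $y = 0$ in the infimum gives $\dual{(A:B)x}{x} \leq \dual{Ax}{x}$, whence $A:B \leq A$; setting $y = x$ gives $\dual{(A:B)x}{x} \leq \dual{Bx}{x}$, whence $A:B \leq B$. For (c), I would note that $A_1 \leq A_2$ and $B_1 \leq B_2$ entail the pointwise inequality
\begin{equation*}
\dual{A_1(x-y)}{x-y} + \dual{B_1 y}{y} \leq \dual{A_2(x-y)}{x-y} + \dual{B_2 y}{y}
\end{equation*}
for every $x, y \in E$; taking the infimum over $y$ on both sides yields $\dual{(A_1:B_1)x}{x} \leq \dual{(A_2:B_2)x}{x}$ for all $x \in E$, which is exactly $A_1:B_1 \leq A_2:B_2$.

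There is no real obstacle here; the only point that needs a brief justification is that equality of the quadratic forms of two positive (hence symmetric) operators on an anti-dual pair implies equality of the operators, which is the standard polarization argument applied to the sesquilinear form $\dual{A\cdot}{\cdot}$. Everything else is a one-line manipulation of the infimum in \eqref{E:parallel}.
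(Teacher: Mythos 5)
Your proposal is correct and follows essentially the same route as the paper: the substitution $y\mapsto x-y$ for (a), and reading (b) and (c) directly off the infimum formula \eqref{E:parallel} (the paper treats (b) and (c) as immediate, while you make the test vectors $y=0$, $y=x$ explicit). Your remark that equality of quadratic forms determines the operator is already built into the uniqueness clause of the theorem defining $A:B$, so nothing further is needed.
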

\begin{proof}
Replacing $y$ by $x-y$ in \eqref{E:parallel} yields
\begin{equation*}
\dual{(A:B)x}{x}=\dual{(B:A)x}{x}\qquad  \textrm{for all $x\in E$},
\end{equation*}
that gives just (a). Properties (b) and (c) are immediate from \eqref{E:parallel}.
\end{proof}
Note that the definition $A:B=V_A^{}QV_A^*$ of the parallel sum shows some asymmetry in $A$ and $B$, although we have $A:B=B:A$. If we define $V_B:\hila\times\hilb\to F$ by 
\begin{equation*}
    V(Ax,By)=By,\qquad x,y\in E,
\end{equation*}
then we will get $V_BQV_B^*=B:A$ and hence 
\begin{equation*}
    V_B^{}QV_B^*=V_A^{}QV_A^*=A:B.
\end{equation*}
\begin{lemma}\label{L:monoton}
    Let $\dual FE$ be a weak-* sequentially complete anti-dual pair and let $\seq{A}$ be an increasing sequence of positive operators, bounded by a positive operator $B\in\lef$: 
    \begin{equation*}
    A_n\leq A_{n+1}\leq B,\qquad n=1,2,\ldots.
    \end{equation*}
   Then $\seq{A}$ converges pointwise to a positive operator  $A\in\lef, A\leq B$ (i.e., $\dual{A_nx}{y}\to\dual{Ax}{y}$ for all $x,y\in E$).
\end{lemma}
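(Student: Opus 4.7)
The plan is to reduce the statement to weak-* sequential completeness of $F$ by showing that $\seq{A_n x}$ is a weak-* Cauchy sequence in $F$ for every $x\in E$.

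\textbf{Step 1 (convergence on the diagonal).} For each $x\in E$, the real sequence $\dual{A_n x}{x}$ is monotone increasing by hypothesis and bounded above by $\dual{Bx}{x}$, so it converges, and in particular is Cauchy.

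\textbf{Step 2 (Cauchy in the off-diagonal).} For $m\geq n$, the operator $A_m-A_n$ is positive. Since $(x,y)\mapsto \dual{(A_m-A_n)x}{y}$ is a positive Hermitian sesquilinear form, the Cauchy--Schwarz inequality gives
\begin{equation*}
  \abs{\dual{(A_m-A_n)x}{y}}^{2}\leq \dual{(A_m-A_n)x}{x}\cdot \dual{(A_m-A_n)y}{y}.
\end{equation*}
Each factor on the right tends to $0$ as $n,m\to\infty$ by Step 1, so $\dual{A_n x}{y}$ is Cauchy in $\dupC$, hence convergent, for every $x,y\in E$.

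\textbf{Step 3 (passing to the limit in $F$).} Fix $x\in E$. Step 2 says that the sequence $(A_n x)\subseteq F$ is Cauchy with respect to the weak-* topology $\sigfe$. By weak-* sequential completeness of $\dual FE$, there exists a (unique, since $\sigfe$ is Hausdorff) element $Ax\in F$ such that $A_n x\to Ax$ in $\sigfe$, i.e., $\dual{A_n x}{y}\to\dual{Ax}{y}$ for every $y\in E$. The resulting map $A\colon E\to F$ is linear because the limit is taken pointwise in $y$ and each $A_n$ is linear.

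\textbf{Step 4 (positivity, membership in $\lef$, and the bound $A\leq B$).} For every $x\in E$,
\begin{equation*}
  \dual{Ax}{x}=\lim_{n\to\infty}\dual{A_n x}{x}\in[0,\dual{Bx}{x}],
\end{equation*}
so $A$ is positive and $A\leq B$. As noted in Section~2, every positive operator on an anti-dual pair is automatically weakly continuous and self-adjoint, hence $A\in\lef$.

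The main (and only nontrivial) obstacle is Step 2: without the Cauchy--Schwarz bound for the positive form $A_m-A_n$, the monotone convergence of the diagonal does not obviously propagate to off-diagonal pairs in the complex (rather than real symmetric) setting. Once that bound is in place, weak-* sequential completeness does the rest of the work.
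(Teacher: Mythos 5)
Your proof is correct and takes essentially the same route as the paper: Cauchy--Schwarz applied to the positive operator $A_m-A_n$, combined with monotone convergence of the diagonal sequence $\dual{A_nx}{x}$, shows that $(A_nx)_{\nen}$ is weak-* Cauchy, and weak-* sequential completeness then yields the limit operator, whose positivity, the bound $A\leq B$, and weak continuity follow just as you say. The only (inessential) difference is that the paper bounds the second Cauchy--Schwarz factor by $\dual{By}{y}$ and lets only the first factor tend to zero, whereas you let both factors tend to zero via Step 1.
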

\begin{proof}
    Since we have 
    \begin{align*}
        \abs{\dual{(A_n-A_m)x}{y}}^2&\leq\dual{(A_n-A_m)x}{x}\dual{(A_n-A_m)y}{y}\\
                                    &\leq \dual{(A_n-A_m)x}{x}\dual{By}{y}
    \end{align*}
    for every $x,y\in E$ and every integer $n\geq m$, it follows that, for every fixed $x\in E$, $(A_nx)_{\nen}$ is a weak Cauchy sequence in $F$. By weak-* sequentially completeness,  there is a vector $Ax\in F$ such that $A_nx\to Ax$ weakly. A straightforward  calculation shows that the pointwise limit $A:E\to F$  is a positive (hence weakly continuous) operator such that $A\leq B$.
\end{proof}
We are going to use this result in the following situation: let $A,B\in\lef$ be positive operators on the weak-* sequentially complete anti-dual pair $\dual FE$. Letting $A_n\coloneqq (nA):B$, we have 
\begin{equation*}
    A_n\leq A_{n+1}\leq B,\qquad n=1,2,\ldots
\end{equation*}
by Proposition \ref{P:propertiesA:B}. Lemma \ref{L:monoton} tells us that the  limit 
\begin{eqnarray}\label{E:[A]B}
\dual{[A]Bx}{y}\coloneqq \limn \dual{((nA):B)x}{y},\qquad x,y\in E
\end{eqnarray}
defines  a positive operator $[A]B\in \lef$ such that $[A]B\leq B$. Our next claim in what follows is to show that $[A]B$ coincides with the $A$-absolutely continuous part $B_a$ of $B$: 
\begin{equation*}
    [A]B=J_B^{}(I-P)J_B^*.
\end{equation*}

To establish the last claim let us introduce the following linear subspaces $\mathcal{N}_{\alpha}$ of $\hila\times\hilb$ for every positive number $\alpha>0$ as follows:
\begin{equation}\label{E:Nn}
\mathcal{N}_\alpha\coloneqq \overline{\set{(\alpha Ax,Bx)}{x\in E}}=\overline{\set{(Ax,\alpha^{-1}Bx)}{x\in E}},
\end{equation}
and denote by $Q_\alpha$ the orthogonal projection onto $\mathcal{N}_\alpha^{\perp}$. With the aid of the $Q_\alpha$'s  we provide useful factorizations for $(\alpha A):B$. 
\begin{proposition}\label{P:Q_n}
Let $A,B\in \lef$ be positive operators on the weak-* sequentially complete anti-dual pair $\dual FE$ and let $V_A,V_B$ and $Q_\alpha$ as above. Then 
\begin{equation}
(\alpha^2A):B=V_BQ_\alpha V_B^*=\alpha^2V_A^{}Q_\alpha V_A^*.
\end{equation}
\end{proposition}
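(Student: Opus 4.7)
My plan is to verify both identities by comparing the quadratic forms of the three operators, exploiting the fact that a positive operator on an anti-dual pair is determined by its quadratic form through polarization. The defining formula \eqref{E:parallel} for $(\alpha^2 A):B$ serves as the common target to which both $V_B Q_\alpha V_B^*$ and $\alpha^2 V_A Q_\alpha V_A^*$ must be matched, so the task reduces to two distance computations in $\hila \times \hilb$ that mirror the one already performed in the existence proof for the parallel sum.

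First, using $V_B^* x = (0, Bx)$ from \eqref{E:V_A*} (and its obvious analogue for $V_B$) together with the fact that $Q_\alpha$ is the orthogonal projection onto $\mathcal{N}_\alpha^\perp$, I obtain
\begin{equation*}
    \dual{V_B Q_\alpha V_B^* x}{x} = \|Q_\alpha V_B^* x\|^2_{\hila \times \hilb} = \dist^2\bigl((0, Bx),\, \mathcal{N}_\alpha\bigr).
\end{equation*}
Expanding this distance along the dense subset $\{(\alpha A y, B y) : y \in E\}$ of $\mathcal{N}_\alpha$ gives
\begin{equation*}
    \dist^2\bigl((0, Bx),\, \mathcal{N}_\alpha\bigr) = \inf_{y \in E}\bigl\{\alpha^2 \dual{Ay}{y} + \dual{B(x-y)}{x-y}\bigr\},
\end{equation*}
and the bijective substitution $y \leftrightarrow x - y$ on $E$ converts the right-hand side into the infimum in \eqref{E:parallel} for $(\alpha^2 A): B$. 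This settles the first equality.

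For the second equality, I repeat the calculation with $V_A^* x = (Ax, 0)$, which yields
\begin{equation*}
    \dist^2\bigl((Ax, 0),\, \mathcal{N}_\alpha\bigr) = \inf_{y \in E}\bigl\{\dual{A(x - \alpha y)}{x - \alpha y} + \dual{By}{y}\bigr\}.
\end{equation*}
Now I perform the bijective change of variables $y' = \alpha y$ on $E$, which keeps the infimum unchanged and converts the first term into $\dual{A(x - y')}{x-y'}$ and the second into $\alpha^{-2}\dual{By'}{y'}$. Multiplying through by $\alpha^2$ distributes cleanly to produce the infimum $\inf_{y' \in E}\{\dual{\alpha^2 A(x-y')}{x-y'} + \dual{By'}{y'}\}$, which is again the right-hand side of \eqref{E:parallel}.

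I do not expect a substantive obstacle: the argument is essentially a rescaled version of the distance computation already carried out in the existence theorem for $A : B$. The only point requiring care is performing the correct affine change of variables on $E$ in each case — a translation for the $V_B$ identity, a dilation for the $V_A$ identity — and tracking how the factor $\alpha^2$ is absorbed into the quadratic form of $A$, which is precisely what makes $(\alpha^2 A):B$ (rather than $(\alpha A):B$) appear on the left-hand side.
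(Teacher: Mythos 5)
Your proof is correct and takes essentially the same route as the paper's: both identities are verified by identifying the quadratic forms with squared distances $\dist^2\bigl(\,\cdot\,,\mathcal{N}_\alpha\bigr)$ in $\hila\times\hilb$ and matching the resulting infima with \eqref{E:parallel} via an affine change of variables on $E$. The only cosmetic difference is in the second identity, where the paper absorbs the factor $\alpha$ into the vector, writing $\alpha^2\dual{V_A^{}Q_\alpha V_A^*x}{x}=\|Q_\alpha(\alpha Ax,0)\|^2$ before computing the distance, whereas you rescale the infimum variable afterwards and then multiply by $\alpha^2$; the two manipulations are trivially equivalent.
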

\begin{proof}
For every $x\in E$ and $\alpha>0$ we have 
\begin{align*}
\dual{V_BQ_\alpha V_B^*x}{x}&=\|Q_\alpha (0,Bx)\|^2_{\hila\times\hilb}=\dist^2((0,Bx),\mathcal{N}_\alpha)\\
&=\inf\set[\big]{\big\|(0,Bx)-(Ay,\alpha^{-1} By)\big\|^2_{\hila\times\hilb}}{y\in E}\\
&=\inf\set[\big]{\dual{Ay}{y}+\dual{B(x-\alpha^{-1} y)}{x-\alpha^{-1}y}}{y\in E}\\
&=\inf\set[\big]{\dual{(\alpha^2 A)(x-z)}{x-z}+\dual{Bz}{z}}{z\in E}\\
&=\dual{((\alpha^2A):B)x}{x},
\end{align*}
which proves the first identity. The second one is proved by the same argument:  
\begin{align*}
\dual{\alpha ^2V_A^{}Q_\alpha V_A^*x}{x}&=\|Q_\alpha (\alpha Ax,0)\|^2_{\hila\times\hilb}\\ &=\dist^2((\alpha Ax,0),\mathcal{N}_\alpha)\\
&=\inf\set[\big]{\|(\alpha Ax,0)-(\alpha Ay,By)\|_{\hila\times\hilb}^2}{y\in E}\\
&=\inf\set{\dual{(\alpha^2A)(x-y)}{x-y}+\dual{By}{y}}{y\in E}\\
&=\dual{((\alpha^2A):B)x}{x},
\end{align*}
as it is claimed.
\end{proof}
The proof of identity $[A]B=B_a$ relies on the ensuing lemma:
\begin{lemma}\label{T:Q_ntoQ}
Let $A,B\in\lef$ be positive operators and let $0\oplus(I-P)$  be the orthogonal projection of $\hila\times\hilb$ onto $\{0\}\times\M^{\perp}$. Then $Q_n\to 0\oplus(I-P)$ in the strong operator topology. 
\end{lemma}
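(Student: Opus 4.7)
The plan is to split an arbitrary vector $(\xi,\eta)\in\hila\times\hilb$ as $(\xi,0)+(0,P\eta)+(0,(I-P)\eta)$ and to analyse the action of $Q_n$ on each summand separately, noting that the target $0\oplus(I-P)$ sends these three pieces to $0$, $0$, and $(0,(I-P)\eta)$ respectively. The unifying algebraic device will be the explicit identification $\mathcal{N}_n^\perp=\ker W_n$, where $W_n\colon\hila\times\hilb\to F$ is the weakly continuous map $W_n(a,b)\coloneqq nJ_Aa+J_Bb$. Using $\sipa{a}{Ax}=\dual{J_Aa}{x}$ and $\sipb{b}{Bx}=\dual{J_Bb}{x}$ one checks that $W_n^*x=(nAx,Bx)$, whence $\mathcal{N}_n=\overline{\ran W_n^*}$; the point-separation of $F$ over $E$ then translates $(a,b)\in\mathcal{N}_n^\perp$ into the single identity $nJ_Aa+J_Bb=0$.

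For the first summand, Proposition \ref{P:Q_n} (with $\alpha=n$) together with Proposition \ref{P:propertiesA:B}(b) gives $V_AQ_nV_A^*=n^{-2}\bigl((n^2A):B\bigr)\leq n^{-2}B$. Since $V_A^*x=(Ax,0)$, this yields $\|Q_n(Ax,0)\|^2=\dual{V_AQ_nV_A^*x}{x}\leq n^{-2}\dual{Bx}{x}\to 0$, and hence $Q_n(\xi,0)\to 0$ for arbitrary $\xi\in\hila$ by the norm-density of $\ran A$ in $\hila$ and the contractivity of the $Q_n$. The second summand is even more immediate: the map $(\zeta,\xi)\mapsto(n\zeta,\xi)$ is a homeomorphism of $\hila\times\hilb$ taking $\widehat{B}$ onto $\mathcal{N}_n$ and fixing vectors of the form $(0,\zeta)$, so $\zeta\in\M$ forces $(0,\zeta)\in\mathcal{N}_n$ for every $n$, and in particular $Q_n(0,P\eta)=0$.

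The substantive step is the third summand: to prove $Q_n(0,\eta)\to(0,\eta)$ for $\eta\in\M^\perp$. Since $\overline{\dom\widehat{B}^*}=\M^\perp$ by \eqref{E:muldom} and the $Q_n$ are contractions, a routine density argument reduces the matter to $\eta\in\dom\widehat{B}^*$. For such an $\eta$, set $\beta\coloneqq\widehat{B}^*\eta\in\hila$; Lemma \ref{L:domBkalap} supplies $J_A\beta=J_B\eta$, and consequently $W_n(-\beta/n,\eta)=-J_A\beta+J_B\eta=0$. Thus $(-\beta/n,\eta)\in\mathcal{N}_n^\perp$ is fixed by $Q_n$, and the decomposition $(0,\eta)=(-\beta/n,\eta)+(\beta/n,0)$ yields $Q_n(0,\eta)=(-\beta/n,\eta)+Q_n(\beta/n,0)$, whose second term is bounded in norm by $\|\beta\|/n\to 0$. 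The main obstacle I anticipate is precisely finding this explicit representative of $\mathcal{N}_n^\perp$ sitting close to $(0,\eta)$; once the adjoint identity $J_A\beta=J_B\eta$ from Lemma \ref{L:domBkalap} is in hand, the rest of the argument reduces to linearity and the orthogonal splitting $\hilb=\M\oplus\M^\perp$.
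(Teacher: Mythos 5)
Your proof is correct, and while it keeps the paper's overall three-part splitting of $(\xi,\eta)$ and reproduces the first step verbatim (the estimate $\|Q_n(Ax,0)\|^2=n^{-2}\dual{((n^2A):B)x}{x}\leq n^{-2}\dual{Bx}{x}$ followed by density and uniform boundedness), the two substantive steps run along genuinely different lines. For the part in $\M$, the paper invokes the maximality clause of Theorem \ref{T:main_Lebdecomp}: from $(n^2A):B\leq n^2A$ and $(n^2A):B\leq B$ it gets $(n^2A):B\leq B_a$, hence $\|Q_n(0,Bx)\|\leq\|(0\oplus(I-P))(0,Bx)\|$, and concludes $Q_n(0,\xi)=0$ by density; your observation that the homeomorphism $(\zeta,\xi)\mapsto(n\zeta,\xi)$ carries $\widehat{B}$ onto $\mathcal{N}_n$ while fixing $\{0\}\times\hilb$ yields $\{0\}\times\M\subseteq\mathcal{N}_n$ directly, which is more elementary and has the side benefit of decoupling the lemma from the maximality statement of Theorem \ref{T:main_Lebdecomp}. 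For the part in $\M^{\perp}$, both arguments reduce by density to $\xi\in\dom\widehat{B}^*$, but the paper then estimates $\|(I-Q_n)(0,\xi)\|^2$ as a supremum of $\abs{\sipb{Bx}{\xi}}^2$ over the set where $n^2\dual{Ax}{x}+\dual{Bx}{x}\leq1$, obtaining the rate $m_\xi/n^2$ from condition (ii) of Lemma \ref{L:domBkalap}, whereas you exhibit the explicit vector $(-\widehat{B}^*\xi/n,\,\xi)\in\mathcal{N}_n^{\perp}$ at distance $\|\widehat{B}^*\xi\|/n$ from $(0,\xi)$, using the identity \eqref{E:JABkalap} together with your identification $\mathcal{N}_n^{\perp}=\ker W_n$ — which is sound, since $W_n^*x=(nAx,Bx)$ by \eqref{E:J*} and the anti-duality separates the points of $F$. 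In effect you use the other half of Lemma \ref{L:domBkalap} than the paper does: the paper's quadratic bound gives a quantitative decay with constant $m_\xi$, while your construction is more geometric, making visible exactly which element of $\mathcal{N}_n^{\perp}$ approximates $(0,\xi)$ and reducing the convergence to plain linearity.
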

\begin{proof}
We proceed in three steps. First we prove that 
\begin{equation}\label{E:step1}
Q_n(\zeta,0)\to0, \qquad  \zeta\in \hila.
\end{equation}
To this aim take $x\in E$. Since $(n^2A):B\leq B$, we obtain by Proposition \ref{P:Q_n} that 
\begin{align*}
\|Q_n(Ax,0)\|^2=\dual{V_A^{}Q_nV_A^*x}{x}=\frac{1}{n^2}\dual{(n^2A):Bx}{x}\leq\frac{1}{n^2}\dual{Bx}{x}\to0.
\end{align*}
The sequence $(Q_n)_{\nen}$ is uniformly bounded, hence \eqref{E:step1} follows by standard density arguments. 

Our next claim is to show 
\begin{equation}\label{E:step2}
Q_n(0,\xi)=0, \qquad  \xi\in \M
\end{equation}
 for every integer $n$. First observe that  $(n^2A):B\leq n^2A$ and hence $(n^2A):B$ is $A$-absolutely continuous. We have $(n^2A):B\leq B$ on the other hand, so $(n^2A):B\leq B_a$ by Theorem \ref{T:main_Lebdecomp}. That yields
\begin{align*}
\|Q_n(0,Bx)\|^2&=\dual{(n^2A):Bx}{x}\leq \dual{B_ax}{x}\\ &=\sipb{(I-P)(Bx)}{(I-P)(Bx)}=\|(0\oplus(I- P))(0,Bx)\|^2.
\end{align*}
Hence, by density, $\|Q_n(0,\xi)\|^2\leq\|(0\oplus(I-P))(0,\xi)\|^2$ for all $\xi\in\hilb$. This implies \eqref{E:step2}.

In the final step of the proof we show that 
\begin{equation}\label{E:step3}
Q_n(0,\xi)\to(0,\xi),\qquad   \xi\in \M^{\perp}.
\end{equation}
 Since $\M^{\perp}=\overline{\dom \widehat{B}^*}$, it suffices to prove  \eqref{E:step3} for $\xi\in\dom \widehat{B}^*$ because of uniform boundedness. Consider $\xi\in\dom \widehat{B}^*$. According to Lemma \ref{L:domBkalap}  there is $m_{\xi}\geq0$ such that 
 \begin{equation*}
 \abs{\sipb{Bx}{\xi}}^2\leq m_{\xi}\dual{Ax}{x} \qquad \textrm{for all $x\in E$.}
\end{equation*}
Consequently,  
\begin{align*}
\|(I-Q_n)(0,\xi)\|^2&=\sup\set[\big]{\Abs{\sip{(nAx,Bx)}{(0,\xi)}}^2}{x\in E, \|(nAx,Bx)\|^2\leq1}\\
&=\sup\set[\big]{\abs{\sipb{Bx}{\xi}}^2}{x\in E, n^2\dual{Ax}{x}+\dual{Bx}{x}\leq1}\\
&\leq\sup\set{m_{\xi}\dual{Ax}{x}}{x\in E,n^2\dual{Ax}{x}\leq1 }\\
&\leq \frac{m_{\xi}}{n^2}\to0.
\end{align*}
The proof is complete.
\end{proof}
We can now prove the following result:
\begin{theorem}\label{T:[A]B=Ba}
Let $A,B\in\lef$ be positive operators on the weak-* sequentially complete anti-dual pair $\dual FE$, then 
\begin{equation}
\limn\dual{((nA):B)x}{y}=\dual{J_B^{}(I-P)J_B^*x}{y},\qquad x,y\in E.
\end{equation}
    In other words, $[A]B$ is identical with the $A$-absolutely continuous part $B_a$ of $B$.
\end{theorem}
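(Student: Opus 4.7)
The plan is to assemble the two preparatory results, Proposition \ref{P:Q_n} and Lemma \ref{T:Q_ntoQ}, and then pass from the subsequence $(n^2A):B$ to the full sequence $(nA):B$ by monotonicity. Specifically, I would first apply Proposition \ref{P:Q_n} to rewrite
\begin{equation*}
    \dual{((n^2A):B)x}{y} = \sip{V_BQ_nV_B^*x}{y} = \sip{Q_n(0,Bx)}{(0,By)}_{\hila\times\hilb}
\end{equation*}
for every $x,y\in E$, where I use the identity $V_B^*x=(0,Bx)$ (dual to \eqref{E:V_A*}, obtained by the same calculation).

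Next, I invoke Lemma \ref{T:Q_ntoQ}, which tells us that $Q_n\to 0\oplus(I-P)$ strongly on $\hila\times\hilb$. Continuity of the inner product in each slot then gives
\begin{align*}
\limn \dual{((n^2A):B)x}{y} &= \sip{(0\oplus(I-P))(0,Bx)}{(0,By)}_{\hila\times\hilb}\\
&= \sipb{(I-P)J_B^*x}{J_B^*y} = \dual{J_B^{}(I-P)J_B^*x}{y} = \dual{B_ax}{y}.
\end{align*}
Thus along the subsequence $n\mapsto n^2$, the pointwise limit of $(nA):B$ equals $B_a$.

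Finally, to upgrade from the subsequence $n^2$ to the full sequence $n$, I use monotonicity: by Proposition \ref{P:propertiesA:B}(c), $(nA):B \leq ((n+1)A):B$ since $nA\leq (n+1)A$, and each term is bounded above by $B$ thanks to Proposition \ref{P:propertiesA:B}(b). Lemma \ref{L:monoton} then guarantees that $\dual{((nA):B)x}{y}\to \dual{[A]Bx}{y}$ exists for all $x,y\in E$, and since $((n^2A):B)_n$ is a subsequence of $((mA):B)_m$ with the same monotone limit, we conclude $\dual{[A]Bx}{y}=\dual{B_ax}{y}$, which is the asserted identity $[A]B=B_a$.

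I do not anticipate a serious obstacle, as the heavy lifting has already been done in Lemma \ref{T:Q_ntoQ}. The only point requiring a little care is the switch between the indexing by $n$ and by $n^2$, but this is handled cleanly by the fact that a monotone (pointwise-convergent in the weak sense) sequence has the same limit as any of its subsequences. Everything else is just the translation of strong convergence of projections on $\hila\times\hilb$ into a weak convergence statement for the corresponding operators on the anti-dual pair.
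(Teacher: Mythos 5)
Your proposal is correct and follows essentially the same route as the paper's own proof: Proposition \ref{P:Q_n} to factor $(n^2A):B$ as $V_BQ_nV_B^*$, Lemma \ref{T:Q_ntoQ} for the strong convergence $Q_n\to 0\oplus(I-P)$, and the identity $V_B^*x=(0,Bx)$. The only difference is cosmetic: you spell out the passage from the subsequence $(n^2A):B$ to the full sequence $(nA):B$, which the paper handles implicitly by noting that $[A]B$ was already defined as the limit of the monotone sequence via Lemma \ref{L:monoton}, so any subsequence has the same limit.
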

\begin{proof}
By Lemma \ref{T:Q_ntoQ} we infer that 
\begin{align*}
\dual{[A]Bx}{y}&=\limn\dual{(n^2A):Bx}{y}=\limn \dual{V_BQ_nV_B^*x}{y}\\
&=\dual{V_B(0\oplus(I-P))V_B^*x}{y}=\sip{(0\oplus(I-P))(0,Bx)}{(0,By)}\\
&=\sipb{(I-P)(Bx)}{By}=\dual{J_B^{}(I-P)J_B^*x}{y},
\end{align*}
that is, $[A]B=B_a$.
\end{proof}
\section{Characterizations of absolute continuity}
It is clear from Theorem \ref{T:main_Lebdecomp} that a positive operator $B$ is absolutely continuous with respect to the positive operator $A$ if and only if $B$ is identical with its $A$-absolutely continuous part $B_a$. In the light of this, Theorem \ref{T:[A]B=Ba} yields yet another characterization absolute continuity, namely, $B\ll A$ if and only if 
\begin{equation*}
    B=[A]B.
\end{equation*}
In particular, every $A$-absolutely continuous operator $B$ on a weak-* sequentially complete anti-dual pair  can be obtained as the pointwise limit of a monotone increasing  sequence $\seq B$  such that $B_n\leq \alpha_n A$ for some nonnegative sequence $\seq \alpha$. 
For positive operators on a Hilbert space,  Ando \cite{Ando} introduced the concept of being absolutely continuous just by this property. Adopting the rather expressive terminology of \cite{Hassi2009}, such a positive operator $B$ will be called ``almost dominated'' by $A$.  

In the first result of this section we are going to show that almost dominated operators are just the absolutely continuous ones: 
\begin{theorem}\label{T:almostdom=ac}
Let $A,B$ be positive operators on the weak-* sequentially anti-dual pair $\dual FE$. The following conditions are equivalent:
 \begin{enumerate}[label=\textup{(\roman*)}, labelindent=\parindent]
\item $B$ is absolutely continuous with respect to $A$.
\item $B$ is almost dominated by $A$, that is, there exists a monotone increasing sequence $(B_n)_{\nen}$ of positive operators in $\lef$ and $(\alpha_n)_{\nen}$ of positive numbers such that  $B_n\leq \alpha_n A$ and $B_n\to B$ pointwise on $E$. 
\end{enumerate}
\end{theorem}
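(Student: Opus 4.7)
The plan is to prove (i)$\Rightarrow$(ii) by exhibiting a concrete almost-dominating sequence built from the parallel sum, and to prove (ii)$\Rightarrow$(i) via the maximality property of $B_a$ from Theorem \ref{T:main_Lebdecomp}.

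For (i)$\Rightarrow$(ii), I would set $B_n \coloneqq (nA):B$. By Proposition \ref{P:propertiesA:B}(b), $B_n \leq nA$, so the dominating sequence is $\alpha_n = n$. By Proposition \ref{P:propertiesA:B}(c), the inequality $nA \leq (n+1)A$ gives $B_n \leq B_{n+1}$, so the sequence is monotone increasing. Finally, by Theorem \ref{T:[A]B=Ba}, $\dual{B_n x}{y} \to \dual{[A]B x}{y} = \dual{B_a x}{y}$ pointwise, and under the hypothesis $B \ll A$ we have $B = B_a$ by the construction in Theorem \ref{T:main_Lebdecomp}, so $B_n \to B$ pointwise on $E$.

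For (ii)$\Rightarrow$(i), I would argue by pinning $B$ between itself and $B_a$. Each summand $B_n \leq \alpha_n A$ trivially satisfies $B_n \ll A$: indeed, for any sequence $\seq x$ in $E$ with $\dual{Ax_k}{x_k}\to 0$, the inequality $\dual{B_n x_k}{x_k}\leq \alpha_n \dual{Ax_k}{x_k}$ already forces $\dual{B_n x_k}{x_k}\to 0$. Moreover, since $\seq B$ is monotone increasing with pointwise limit $B$, we have $B_n \leq B$ for each $n$. By the maximality statement in Theorem \ref{T:main_Lebdecomp}, $B_a$ dominates every positive operator that is both $\leq B$ and $A$-absolutely continuous, so $B_n \leq B_a$ for every $n$. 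Passing to the pointwise limit yields $B \leq B_a$, and since $B_a \leq B$ always, we conclude $B = B_a$, i.e.\ $B \ll A$.

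The only step requiring genuine care is verifying that the pointwise limit of the $B_n$ may be identified with $B_a$ in the forward direction, but this is exactly the content of Theorem \ref{T:[A]B=Ba}; the rest is bookkeeping with the order structure. No obstacle beyond invoking the already-established maximality of $B_a$ and the parallel-sum approximation formula is anticipated.
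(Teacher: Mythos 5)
Your proof is correct, and your forward direction coincides with the paper's own: both take $B_n\coloneqq(nA):B$ with $\alpha_n=n$, get monotonicity and domination from Proposition \ref{P:propertiesA:B}, and invoke Theorem \ref{T:[A]B=Ba} together with the identity $B=B_a$, which holds under (i). Where you genuinely diverge is in (ii)$\Rightarrow$(i). The paper does not use the maximality of $B_a$ there at all: it factorizes each $B_n$ as $J_B^{}C_nJ_B^*$ with positive contractions $C_n\in\mathscr{B}(\hilb)$ via Corollary \ref{C:factor}, shows through Lemma \ref{L:adjointrange} and Lemma \ref{L:domBkalap} that $\ran C_n\subseteq\dom\widehat{B}^*$, and then proves $C_n\to I$ strongly, so that $\dom\widehat{B}^*$ is dense in $\hilb$, which is equivalent to $B\ll A$. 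Your argument instead squeezes $B$ between the $B_n$ and $B_a$: each $B_n\leq\alpha_n A$ is trivially $A$-absolutely continuous (as you note, the Cauchy hypothesis is not even needed), monotonicity plus pointwise convergence gives $B_n\leq B$, the maximality clause of Theorem \ref{T:main_Lebdecomp} gives $B_n\leq B_a$, and passing to the limit yields $B\leq B_a\leq B$, whence $B=B_a\ll A$; the final identification is legitimate because the positive operator $B-B_a$ then has vanishing quadratic form and so vanishes by polarization. There is no circularity, since the maximality of $B_a$ is proved in Theorem \ref{T:main_Lebdecomp} without any reference to almost domination. The trade-off: your route is shorter and purely order-theoretic, but it leans on the full strength of the maximality statement, whereas the paper's proof of (ii)$\Rightarrow$(i) is independent of it, works directly in the auxiliary Hilbert space, and yields the extra structural information that the contractions $C_n$ converge strongly to the identity with ranges inside $\dom\widehat{B}^*$.
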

\begin{proof}
Implication (i)$\Rightarrow$(ii) is  clear from Theorem \ref{T:[A]B=Ba} and from what has been said above. For the converse implication let $(B_n)_{\nen}$ be a sequence  satisfying (ii). By Corollary \ref{C:factor}, for any integer $n$ there is a positive operator $C_n\in\mathscr{B}(\hilb), \|C_n\|\leq1$,  such that $B_n=J_BC_nJ_B^*$. We claim that 
\begin{eqnarray}\label{E:ranCnJ}
\ran C_n\subseteq \dom \widehat B^*,\qquad n=1,2,\ldots.
\end{eqnarray}
For let $\xi\in \hilb$, then for every $x\in E$ we  have
\begin{align*}
    \abs{\dual{J_BC_n\xi}{x}}^2&=\abs{\sipb{\xi}{C_nJ_B^*x}}^2\\
    &\leq \|\xi\|^2_B \|C_nJ_B^*x\|^2_B\\
    &\leq \|\xi\|^2_B \|C^{1/2}_nJ_B^*x\|^2_B\\
    &=\|\xi\|^2_B \dual{B_nx}{x}\\
    &=\alpha_n\|\xi\|^2_B\dual{Ax}{x}\\
    &=\alpha_n\|\xi\|^2_B\|J_A^*x\|^2_A,
\end{align*}
whence we  conclude that $J_BC_n\xi\in \ran J_A$ by Lemma \ref{L:adjointrange}. By Lemma \ref{L:domBkalap}, $C_n\xi\in\dom\widehat{B}^*$, that proves \eqref{E:ranCnJ}. Since $B$ is absolutely continuous precisely when $\widehat{B}^*$ is densely defined, it suffices to prove that the union of ranges of the $C_n$'s is dense in $\hilb$, or equivalently, 
\begin{equation}\label{E:kerCn}
  \bigcap_{n=1}^\infty \ker C_n=\{0\}. 
\end{equation}
To check this identity take  $x\in E$, then 
\begin{align*}
    \|Bx-C_n(Bx)\|^2_B&=\dual{Bx}{x}-2\sipb{C_nJ_B^*x}{J_B^*x}+\|C_n(Bx)\|^2_B\\
    &\leq 2\dual{Bx}{x}-2\dual{B_nx}{x}\to0,
\end{align*}
from which we deduce that $C_n$ converges strongly to the identity operator of $\hilb$, and this clearly implies \eqref{E:kerCn}.
\end{proof}

We remark that the existence of a Lebesgue-type decomposition can be proved easily by means of (ii) with an elementary iteration involving parallel addition (see  \cite{Arlinskii} and \cite{Arlinskii-T}). Hovewer, the itaration itself  does not guarantee the maximality of the resulted absolutely continuous part.

In the rest of the section, our goal is to give a Radon--Nikodym type characterization of absolute continuity. In order to formulate our main result we need some  preliminaries. 

Let $\dual FE$ be a weak-* sequentially complete dual pair and consider two positive operators $A,B\in\lef$ on it. Denote by $\hilab$ the corresponding auxiliary Hilbert space, and by $J\coloneqq J_{A+B}$ the natural embedding operator of $\hilab$ into $F$. A straightforward application of Lemma \ref{C:factor} gives then two positive contractions $C_A, C_B\in \mathscr{B}(\hilab)$ such that 
\begin{equation*}
    \sipab{C_AJ^*x}{J^*y}=\dual{Ax}{y}, \qquad \sipab{C_BJ^*x}{J^*y}=\dual{Bx}{y}
\end{equation*}
for every $x,y\in E$.  Our first technical lemma gives some characterizations of the absolute continuity in terms of $C_A$ and $C_B$:
\begin{lemma}\label{L:kerT}
The following assertions are equivalent:
\begin{enumerate}[label=\textup{(\roman*)}, labelindent=\parindent]
\item $B$ is absolutely continuous with respect to $A$,
\item $\ker C_A=\{0\}$,
\item $\ran C_B\subseteq (\ker C_A)^{\perp}.$
\end{enumerate}
\end{lemma}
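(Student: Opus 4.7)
The plan begins with the observation that the data $C_A, C_B$ satisfy $C_A + C_B = I$. Indeed, the identity
$\sipab{(C_A+C_B)J^*x}{J^*y} = \dual{(A+B)x}{y} = \sipab{J^*x}{J^*y}$
holds on the dense subspace $\ran J^* = \set{(A+B)x}{x\in E}$ of $\hilab$, so $C_A+C_B=I$ on $\hilab$.

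From this, the equivalence (ii)$\Leftrightarrow$(iii) is essentially formal: if $v\in\ker C_A$ then $C_B v = (I-C_A)v = v$, so $\ker C_A\subseteq \ran C_B$. Hence (iii) forces $\ker C_A\subseteq \ker C_A\cap(\ker C_A)^\perp = \{0\}$, giving (ii). The reverse direction is immediate since $\ker C_A=\{0\}$ makes $(\ker C_A)^\perp = \hilab$.

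The substantive content is the equivalence (i)$\Leftrightarrow$(ii), which I will prove by comparing limits of $J^*x_n=(A+B)x_n\in\hilab$ with sequences witnessing membership in $\M$, using the identities
$\sipab{C_AJ^*x}{J^*x}=\dual{Ax}{x}$ and $\sipab{C_BJ^*x}{J^*x}=\dual{Bx}{x}$.
For (ii)$\Rightarrow$(i), take $\xi\in\M$ and a sequence $\seq x$ with $\dual{Ax_n}{x_n}\to0$ and $Bx_n\to\xi$ in $\hilb$. Then $\|J^*(x_n-x_m)\|^2_{A+B}=\dual{A(x_n-x_m)}{x_n-x_m}+\dual{B(x_n-x_m)}{x_n-x_m}\to 0$ (both summands separately), so $J^*x_n$ is Cauchy in $\hilab$ with limit $v$. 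By continuity, $\sipab{C_Av}{v}=\lim\dual{Ax_n}{x_n}=0$, hence $v\in\ker C_A=\{0\}$; but also $\|v\|^2=\lim\|J^*x_n\|^2=\lim(\dual{Ax_n}{x_n}+\dual{Bx_n}{x_n})=\|\xi\|^2$, forcing $\xi=0$, so $\M=\{0\}$ and $B\ll A$. For (i)$\Rightarrow$(ii), given $v\in\ker C_A$, use density of $\ran J^*$ in $\hilab$ to pick $x_n\in E$ with $J^*x_n\to v$; then $\dual{Ax_n}{x_n}=\sipab{C_AJ^*x_n}{J^*x_n}\to 0$, and the Cauchyness of $J^*x_n$ forces, as above, $Bx_n$ to be Cauchy in $\hilb$ with some limit $\xi\in\M$. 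Since $\sipab{C_Bv}{v}=\|v\|^2$, we get $\|\xi\|^2=\lim\dual{Bx_n}{x_n}=\|v\|^2$, and (i) gives $\xi=0$, so $v=0$.

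The only real obstacle is verifying that $J^*x_n$ being Cauchy in $\hilab$ forces $Bx_n$ to be Cauchy in $\hilb$; this is handled by the decomposition of $\|J^*(x_n-x_m)\|^2_{A+B}$ into the nonnegative pieces $\dual{A(x_n-x_m)}{x_n-x_m}$ and $\dual{B(x_n-x_m)}{x_n-x_m}=\|B(x_n-x_m)\|_B^2$, which is the key bookkeeping throughout.
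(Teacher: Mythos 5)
Your proof is correct, and for the equivalence (ii)$\Leftrightarrow$(iii) it takes a genuinely different (and slicker) route than the paper. The paper never introduces the identity $C_A+C_B=I$; instead it proves the unconditional inclusion $\ker C_B\subseteq(\ker C_A)^{\perp}$ by taking two sequences $\seq{x}$, $\seq{y}$ with $J^*x_n\to\xi\in\ker C_A$, $\dual{Ax_n}{x_n}\to0$ and $J^*y_n\to\zeta\in\ker C_B$, $\dual{By_n}{y_n}\to0$, and estimating $\abs{\sipab{\xi}{\zeta}}=\limn\abs{\dual{Ax_n}{y_n}+\dual{Bx_n}{y_n}}$ via Cauchy--Schwarz together with the boundedness of $\dual{Bx_n}{x_n}$ and $\dual{Ay_n}{y_n}$; it then deduces (iii)$\Rightarrow$(ii) by splitting a vector of $\ker C_A$ along $\hilab=\overline{\ran C_B}\oplus\ker C_B$, both summands landing in $(\ker C_A)^{\perp}$. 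Your observation that $C_A+C_B=I$ (correctly justified by density of $\ran J^*$ in both slots) replaces all of this with the formal remark $\ker C_A\subseteq\ran C_B$, and it in fact subsumes the paper's auxiliary inclusion: for $v\in\ker C_B$ and $u\in\ker C_A$, self-adjointness gives $\sipab{v}{u}=\sipab{C_Av}{u}=\sipab{v}{C_Au}=0$. The same identity also streamlines your step $\sipab{C_Bv}{v}=\|v\|^2$. For the substantive equivalence (i)$\Leftrightarrow$(ii) your mechanism is essentially the paper's --- Cauchy sequences $J^*x_n$ in $\hilab$, the splitting $\|J^*x\|^2_{A+B}=\dual{Ax}{x}+\dual{Bx}{x}$, and positivity of $C_A$ to pass from $\sipab{C_Av}{v}=0$ to $C_Av=0$ --- except that you route the argument through $\M$ and the equivalence $B\ll A\Leftrightarrow\M=\{0\}$ recorded in Section 3, where the paper argues directly from the sequential definition of absolute continuity; this is a harmless repackaging. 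Net effect: your treatment of (iii) is shorter and more transparent, while the paper's two-sequence estimate has the mild virtue of not depending on the special relation $C_A+C_B=I$, so it would survive verbatim for pairs of positive contractions not summing to the identity.
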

\begin{proof}
Assume first that $B$ is $A$-absolutely continuous. If $\xi\in\ker C_A$, then there exists a sequence $\seq{x}$ in $E$ so that $J^*x_n\to\xi$ in $\hilab$ and $\dual{Ax_n}{x_n}\to0$. It is clear  that  $\dual{B(x_n-x_m)}{x_n-x_m}\to0$, hence
\begin{align*}
\|\xi\|_{A+B}^2=\limn \dual{(A+B)x_n}{x_n}=\limn\dual{Bx_n}{x_n}=0,
\end{align*}
because of absolute continuity. Thus (i) implies (ii). The converse implication goes similar: assume that $\ker C_A=\{0\}$ and consider a sequence $\seq{x}$ so that $\dual{Ax_n}{x_n}\to0$ and that $\dual{B(x_n-x_m)}{x_n-x_m}\to0$. Clearly, $\seq{(A+B)x}$ is  a Cauchy sequence in $\hilab$ and its limit $\xi$ belongs to $\ker C_A=\{0\}$. This yields 
\begin{equation*}
\dual{Bx_n}{x_n}\leq \dual{(A+B)x_n}{x_n}\to\|\xi\|^2_{A+B}=0,
\end{equation*}
thus $B\ll A$. 
It remains to show that  (iii) implies (ii) (the backward implication being trivial). That will follow apparently if we show that 
\begin{equation}\label{E:kerTb}
\ker C_B\subseteq (\ker C_A)^{\perp}
\end{equation}
holds for arbitrary $A$ and $B$. To this end, take $\xi\in \ker C_A$, $\zeta\in\ker C_B$, and choose a sequence $\seq{x}$  such that 
\begin{align*}
J^*x_n\to &\xi,\qquad\mbox{and}\qquad \dual{Ax_n}{x_n}\to0,
\end{align*}
and choose another sequence $\seq{y}$ such that 
\begin{align*}
J^*y_n\to &\zeta,\qquad\mbox{and}\qquad \dual{By_n}{y_n}\to0.
\end{align*}
Both the sequences $\dual{Bx_n}{x_n}$ and $\dual{Ay_n}{y_n}$ are bounded, hence  
\begin{align*}
\abs{\sipab{\xi}{\zeta}}=&\limn \abs{\sipab{J^*x_n}{J^*y_n}}\\
                        =&\limn \abs{\dual{Ax_n}{y_n}+\dual{Bx_n}{y_n}}\\ 
                        \leq& \limsup_{n\to\infty} \dual{Ax_n}{x_n}^{1/2}\dual{Ay_n}{y_n}^{1/2}\\ 
                        &+ \limsup_{n\to\infty} \dual{Bx_n}{x_n}^{1/2}\dual{By_n}{y_n}^{1/2}=0,
\end{align*}
which proves \eqref{E:kerTb}.
\end{proof}
Now we can prove the main result of this section:
\begin{theorem}\label{T:R-N}
Let $\dual FE$ be a weak-* sequentially complete anti-dual pair  let $A,B\in\lef$ be positive operators. The following are equivalent:
 \begin{enumerate}[label=\textup{(\roman*)}, labelindent=\parindent]
\item $B$ is absolutely continuous with respect to $A$,
\item for every $y\in E$ there exists a sequence $\seq{y}$ in $E$ such that 
\begin{equation*}
\dual{Bx}{y}=\limn \dual{Ax}{y_n},\qquad x\in E,
\end{equation*}
and the convergence is uniform on the set $\set{x\in E}{\dual{(A+B)x}{x}\leq1}$.
\end{enumerate}
\end{theorem}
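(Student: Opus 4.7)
My plan is to work inside the auxiliary Hilbert space $\hilab$ and exploit the contractions $C_A, C_B \in \mathscr{B}(\hilab)$ introduced just before Lemma \ref{L:kerT}, which represent $A$ and $B$ via
\begin{equation*}
    \dual{Ax}{y} = \sipab{C_AJ^*x}{J^*y}, \qquad \dual{Bx}{y} = \sipab{C_BJ^*x}{J^*y},
\end{equation*}
where $J = J_{A+B}$. The strategy is to translate the uniform convergence condition in (ii) into a norm-convergence statement in $\hilab$, and then compare it to the characterization $\ran C_B \subseteq (\ker C_A)^\perp$ from Lemma \ref{L:kerT}.

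First, I would rewrite $\dual{Bx}{y} - \dual{Ax}{y_n} = \sipab{J^*x}{C_BJ^*y - C_AJ^*y_n}$. Since the set $\{x \in E : \dual{(A+B)x}{x} \leq 1\}$ is precisely $\{x \in E : \|J^*x\|_{A+B} \leq 1\}$, and since $\ran J^*$ is dense in $\hilab$, the supremum
\begin{equation*}
    \sup_{\|J^*x\|_{A+B} \leq 1}\bigl|\sipab{J^*x}{C_BJ^*y - C_AJ^*y_n}\bigr|
\end{equation*}
coincides with $\|C_BJ^*y - C_AJ^*y_n\|_{A+B}$. Thus condition (ii) is equivalent to: for every $y \in E$ there exists a sequence $\seq{y}$ in $E$ such that $C_AJ^*y_n \to C_BJ^*y$ in $\hilab$.

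Next I would identify the closure of the set $\{C_AJ^*z : z \in E\} = C_A(\ran J^*)$ inside $\hilab$. Since $C_A$ is bounded and $\ran J^*$ is dense, this closure equals $\overline{\ran C_A}$, and since $C_A$ is self-adjoint, $\overline{\ran C_A} = (\ker C_A)^\perp$. Hence (ii) is equivalent to $C_BJ^*y \in (\ker C_A)^\perp$ for every $y \in E$, and applying density of $\ran J^*$ together with continuity of $C_B$, this is in turn equivalent to $\ran C_B \subseteq (\ker C_A)^\perp$. By Lemma \ref{L:kerT}, this is precisely the statement $B \ll A$, completing the equivalence.

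There is no real obstacle here; the only point requiring care is the passage between a uniform estimate on the ellipsoid $\{x : \dual{(A+B)x}{x}\leq 1\}$ and a genuine norm in $\hilab$, which rests squarely on the density of $\ran J^*$ in $\hilab$. Once this reduction is in place, the rest is a direct application of the algebraic characterization already established in Lemma \ref{L:kerT}.
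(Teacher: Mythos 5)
Your proof is correct, and it differs from the paper's in one of the two directions. For (ii)$\Rightarrow$(i) you follow essentially the paper's own argument: the identity $\dual{Bx}{y}-\dual{Ax}{y_n}=\sipab{J^*x}{C_BJ^*y-C_AJ^*y_n}$ together with density of $\ran J^*$ in $\hilab$ converts the uniform estimate on the ellipsoid into $\|C_BJ^*y-C_AJ^*y_n\|_{A+B}\to0$, whence $\ran C_B\subseteq(\ker C_A)^{\perp}$ and Lemma \ref{L:kerT} applies. For (i)$\Rightarrow$(ii), however, the paper does something different: it works with the adjoint relation $\widehat B^*$, using that $B\ll A$ makes $\dom\widehat B^*$ dense in $\hilb$, picks $\xi_n\in\dom\widehat B^*$ with $\|By-\xi_n\|_B\leq 1/n$ and then $y_n\in E$ with $\|\widehat B^*\xi_n-Ay_n\|_A\leq 1/n$, obtaining the explicit bound $\abs{\dual{Bx}{y}-\dual{Ax}{y_n}}\leq \tfrac{\sqrt2}{n}\sqrt{\dual{(A+B)x}{x}}$; the approximants $y_n$ thus arise constructively as approximate Radon--Nikodym derivatives, with a rate. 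You instead run the same $C_A$, $C_B$ machinery in both directions, observing that (ii) is \emph{equivalent} to $C_BJ^*y\in\overline{C_A(\ran J^*)}=\overline{\ran C_A}=(\ker C_A)^{\perp}$ for every $y$ (self-adjointness of $C_A$ being the key), and then invoking the full equivalence (i)$\Leftrightarrow$(iii) of Lemma \ref{L:kerT} --- which the paper does establish --- to close the loop. Your route is shorter and pleasingly symmetric, at the cost of being non-constructive: it produces the sequence $\seq y$ abstractly from membership in a closure, while the paper's argument exhibits it concretely via $\widehat B^*$. All the individual steps you flag (the unit ball of the dense subspace $\ran J^*$ being norm-dense in the unit ball of $\hilab$, so the supremum over the ellipsoid equals the $\hilab$-norm, and $\overline{C_A(\ran J^*)}=\overline{\ran C_A}$ by boundedness of $C_A$) are sound.
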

\begin{proof}
Recall that $B$ is $A$-absolutely continuous precisely if $\widehat{B}$ is (the graph of) a closable operator, or equivalently, if $\dom \widehat{B}^*$ is dense in $\hilb$. Hence, if $B\ll A$ then   for every vector $\xi\in\hilb$ there is a sequence $\seq{\xi}$ in  $\dom \widehat{B}^*$ such that $\|\xi-\xi_n\|_B\leq 1/n$, $n=1,2,\ldots$ Furthermore, by density, we can find a sequence $\seq{y}$ in $E$ such that $\|\widehat{B}^*\xi_n-Ay_n\|_A\leq 1/n$ for each $n$. Hence 
\begin{align*}
\abs{\sipb{Bx}{\xi}- \dual{Ax}{y_n}}&\leq \abs{\sipb{Bx}{\xi-\xi_n}}+\abs{\sipb{Bx}{\xi_n}-\sipa{Ax}{Ay_n}}\\
&= \abs{\sipb{Bx}{\xi-\xi_n}}+\abs{\sipa{Ax}{\widehat{B}^*\xi_n}-\sipa{Ax}{Ay_n}}\\
&\leq \frac1n (\|Ax\|_A+\|Bx\|_B)\leq \frac{\sqrt{2}}{n} \sqrt{\dual{(A+B)x}{x}}.
\end{align*}
The choice $\xi\coloneqq By\in\hilb$ with an arbitrary $y\in E$ gives that (i) implies (ii).  Let us prove now the backward implication: let $y\in E$ and choose $\seq{y}$ according to (ii). Since $\ran J^*$ is dense in $\hilab$, it follows that 
\begin{align*}
\|C_B&J^*y-C_AJ^*y_n\|_{A+B}\\
&=\sup_{x\in E, \|J^*x\|_{A+B}\leq1} \abs{\sipab{J^*x}{C_BJ^*y-C_AJ^*y_n}}\\
&=\sup_{x\in E, \dual{(A+B)x}{x}\leq1}\abs{\dual{Bx}{y}- \dual{Ax}{y_n}}\to0.
\end{align*}
We see therefore that  $\ran C_BJ^*\subseteq \overline{\ran C_A}$, and hence  $ \ran C_B\subseteq(\ker C_A)^{\perp}$. Lemma \ref{L:kerT} completes the proof. 
\end{proof}
\section{Characterizations of singularity}
This section is devoted to some characterizations of singularity. Note that the original definition of singularity is rather algebraic as depending on the ordering induced by positivity.  Below we are going to provide some further equivalent characterizations which reflect some geometric and metric features of singularity.
For analogous results see  \cites{Ando,Hassi2007,Tarcsay_parallel}.
\begin{theorem}\label{T:charsing}
Let $\dual FE$ be a weak-* sequentially complete anti-dual pair and let $A,B\in\lef$ be positive operators on it. The following assertions are equivalent: 
\begin{enumerate}[label=\textup{(\roman*)}, labelindent=\parindent]
\item $A$ and $B$ are mutually singular,
\item $A:B=0,$
\item the set $\set{(Ax,Bx)}{x\in E}$ is dense in $\hila\times\hilb$,
\item $\xi=0$ is the only vector in $\hilb$ such that $\abs{\sipb{Bx}{\xi}}^2\leq M_{\xi}\dual{Ax}{x}$ for every $x$ in $E$,
\item $\M=\hilb,$
\item $\ran J_A\cap\ran J_B=\{0\}$, 
\item for every $x$ in $E$ there is a sequence $\seq x$  such that
\begin{equation*}
    \dual{Ax_n}{x_n}\to0\qquad \mbox{and}\qquad \dual{B(x-x_n)}{x-x_n}\to0.
\end{equation*}
\end{enumerate}
\end{theorem}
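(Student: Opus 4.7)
The plan is to establish the cycle (i) $\Rightarrow$ (ii) $\Rightarrow$ (v) $\Rightarrow$ (vii) $\Rightarrow$ (i), and separately show that (iii), (iv), (v), (vi) are pairwise equivalent. First, (i) $\Rightarrow$ (ii) is immediate from Proposition \ref{P:propertiesA:B}(b): since $A:B\leq A$ and $A:B\leq B$, mutual singularity forces $A:B = 0$. For (ii) $\Rightarrow$ (v) I will use both factorizations $A:B = V_AQV_A^* = V_BQV_B^*$, where $Q$ is the orthogonal projection of $\hila\times\hilb$ onto $\widehat B^{\perp}$, $V_A^*x=(Ax,0)$, and $V_B^*x=(0,Bx)$. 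If $A:B = 0$, then $\|QV_A^*x\|^2 = \dual{V_AQV_A^*x}{x} = 0$ and analogously $\|QV_B^*x\|^2 = 0$ for every $x\in E$, so $(Ax,0)$ and $(0,Bx)$ both lie in $\widehat B$. Density of $\ran A$ in $\hila$ and of $\ran B$ in $\hilb$, combined with closedness and linearity of $\widehat B$, then yield $\widehat B = \hila\times\hilb$, and in particular $\M = \hilb$.

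For the equivalence of (iii), (iv), (v), (vi): (iii) $\Leftrightarrow$ (v) because $\widehat B$ is the closure of the set in (iii); assuming $\M = \hilb$, for any $(\zeta,\xi)\in \hila\times\hilb$ one picks $y_n\in E$ with $Ay_n\to \zeta$, notes that $(0,\xi-By_n)\in \widehat B$, and concludes $(\zeta,\xi)\in \widehat B$ by closedness. The equivalence (iv) $\Leftrightarrow$ (v) combines Lemma \ref{L:domBkalap}(ii) with \eqref{E:domBperp=M}: condition (iv) says $\dom\widehat B^* = \{0\}$, which is equivalent to $\M = \hilb$. Finally (iv) $\Leftrightarrow$ (vi) rests on Lemma \ref{L:domBkalap}(iii): $\xi\in\dom \widehat B^*$ iff $J_B\xi\in\ran J_A$; since $\ran J_B^* = \ran B$ is dense in $\hilb$ the operator $J_B$ is injective, so $\dom\widehat B^* = \{0\}$ translates to $\ran J_A\cap \ran J_B = \{0\}$.

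It remains to prove (v) $\Rightarrow$ (vii) $\Rightarrow$ (i). For the first, given $x\in E$ apply \eqref{E:M} to $Bx\in\hilb = \M$ to produce a sequence $(x_n)$ with $\dual{Ax_n}{x_n}\to 0$ and $Bx_n\to Bx$ in $\hilb$; then $\dual{B(x-x_n)}{x-x_n} = \|Bx-Bx_n\|_B^2\to 0$. For (vii) $\Rightarrow$ (i), take a positive $C\in\lef$ with $C\leq A$ and $C\leq B$; for any $x\in E$, pick $(x_n)$ as in (vii) and use that $y\mapsto \dual{Cy}{y}^{1/2}$ is a seminorm to estimate
\begin{equation*}
\dual{Cx}{x}^{1/2} \leq \dual{Cx_n}{x_n}^{1/2} + \dual{C(x-x_n)}{x-x_n}^{1/2} \leq \dual{Ax_n}{x_n}^{1/2} + \dual{B(x-x_n)}{x-x_n}^{1/2} \to 0.
\end{equation*}
Thus $\dual{Cx}{x} = 0$ for every $x$; Cauchy--Schwarz then forces $\dual{Cx}{y} = 0$ for all $x,y\in E$, and separation of the anti-duality yields $C = 0$. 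The main technical obstacle is step (ii) $\Rightarrow$ (v), where one must exploit \emph{both} factorizations of the parallel sum (the definition $A:B = V_AQV_A^*$ alone only controls the $\hila$-axis) in order to fill out the entire product $\hila\times\hilb$; the other implications are direct applications of Lemma \ref{L:domBkalap} and the description \eqref{E:M} of $\M$.
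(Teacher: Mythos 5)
Your proof is correct, and for most of the implications it coincides with the paper's: the implication (i)\,$\Rightarrow$\,(ii) via Proposition \ref{P:propertiesA:B}(b), the equivalence of (iii)--(vi) through Lemma \ref{L:domBkalap} together with \eqref{E:domBperp=M} and the injectivity of $J_B$, and your (ii)\,$\Rightarrow$\,(v) step is exactly the paper's computation for (ii)\,$\Rightarrow$\,(iii) (that $Q(Ax,0)=Q(0,By)=0$ forces $\ran Q=\{0\}$), merely read off at the level of the relation $\widehat B$ instead of as a density statement; you are also right that both factorizations $A:B=V_A^{}QV_A^*=V_B^{}QV_B^*$ are needed here, a point the paper makes right after Proposition \ref{P:propertiesA:B}. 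The genuine divergence is how singularity is recovered. The paper closes the loop through its decomposition machinery: from $A:B=0$ it deduces $(nA):B=0$ for every $n$ (using $(nA):B\leq n(A:B)$), hence $[A]B=B_a=0$ by Theorem \ref{T:[A]B=Ba}, so $B=B_s$ and $B\perp A$ by Theorem \ref{T:main_Lebdecomp}; the implication (i)\,$\Rightarrow$\,(v) is obtained the same way. You instead prove (vii)\,$\Rightarrow$\,(i) directly and elementarily: since $C$ is positive (hence its form is Hermitian positive semidefinite), $y\mapsto\dual{Cy}{y}^{1/2}$ is a seminorm, and the triangle inequality along the sequence from (vii), combined with $C\leq A$ and $C\leq B$, squeezes $\dual{Cx}{x}$ to $0$; Cauchy--Schwarz and the separation property of the anti-duality then give $C=0$. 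The trade-off: your route is self-contained, never invoking Theorems \ref{T:main_Lebdecomp} and \ref{T:[A]B=Ba}, so the characterization of singularity becomes logically independent of the Lebesgue decomposition; the paper's heavier route, in exchange, exhibits singularity concretely as the vanishing of the absolutely continuous part $B_a=[A]B$, which ties the theorem into the parallel-sum description developed in Section 4.
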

\begin{proof}
Since $A:B\leq A$ and $A:B\leq B$, (i) implies (ii). Assume (ii), then
\begin{equation*}
A:B=V_A^{}QV_A^*=V_B^{}QV_B^*=0,
\end{equation*}
where $Q$ is the orthogonal projection of $\hila\times \hilb$ onto $\widehat{B}^{\perp}$.
That gives $Q(Ax,0)=0=Q(0,By)$ for every  $x,y\in E$. Since $\ran A\times \ran B$ is dense in $\hila\times\hilb$ it follows that 
\begin{equation*}
\{0\}=\ran Q=\set{(Ax,Bx)}{x\in E}^{\perp},
\end{equation*}
hence (ii) implies (iii). That (iii) implies (ii) is clear from identity $A:B=V_A^{}QV_A^*$. Observe furthermore that 
\begin{equation*}
A:B\leq (nA):B\leq (nA):(nB)=n(A:B),
\end{equation*}
hence   $A:B=0$ implies $(nA):B=0$ for each $n$, and  we have therefore  $B_a=[A]B=0$ by Theorem \ref{T:[A]B=Ba}. This means that $B=B_s$ in the view of Theorem \ref{T:main_Lebdecomp}, hence $B\perp A$. This proves that (ii) implies (i). The equivalence between (iv), (v), (vi) is clear from Lemma \ref{L:domBkalap} and identity \eqref{E:domBperp=M}. Supposing (v) we have $B_a=0$ and hence $B_s=B$ and hence $B\perp A$  by Theorem \ref{T:main_Lebdecomp}. Conversely, if $B$ and $A$ are mutually singular then, as it has been shown above, $(nA):B=0$ for each $n$ and thus $J_B^{}(I-P)J_B^*=[A]B=0$. Consequently, $(I-P)(Bx)=0$ for every  $x\in E$ and therefore $I-P=0$ by density. Hence (i) implies (v). We see therefore that (i)-(vi) are equivalent. Finally, suppose (iii) and fix $x\in E$, then there is a sequence $\seq x$ such that $(Ax_n,Bx_n)\to (0,Bx)$ in $\hila\times\hilb$, which clearly implies $\dual{Ax_n}{x_n}\to 0$ and $\dual{B(x_n-x_m)}{x_n-x_m}\to 0$, hence (iii) implies (vii). Conversely, if we assume (vii) then we have that the dense set $\ran B$ is included in $\M$, hence $\M=\hilb$. This means that (vii) implies (v). 
\end{proof}
As an immediate consequence we conclude that absolute continuity and singularity are complementary notions in some sense: 
\begin{corollary}
Let $A$ be a positive operator on the weak-* sequentially complete anti-dual pair $\dual FE$. Then $B=0$ is the unique positive operator which is simultaneously $A$-absolutely continuous and $A$-singular. In other words, $B\ll A$ and $B\perp A$ imply $B=0$.
\end{corollary}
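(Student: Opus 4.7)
The plan is to combine the Lebesgue decomposition from Theorem \ref{T:main_Lebdecomp} with the parallel-sum characterization of singularity in Theorem \ref{T:charsing} and the identification of the absolutely continuous part as $[A]B$ in Theorem \ref{T:[A]B=Ba}. The whole statement should fall out with no real calculation.

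First, I would invoke Theorem \ref{T:main_Lebdecomp} to write $B=B_a+B_s$, with $B_a\ll A$ maximal among positive operators $C\in\lef$ satisfying $C\leq B$ and $C\ll A$. Applying this maximality to $C\coloneqq B$ itself (which is legal because $B\ll A$ by hypothesis and trivially $B\leq B$) yields $B\leq B_a$. Combined with $B_a\leq B$ this gives $B=B_a$, so $B_s=0$. Thus it suffices to show $B_a=0$.

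For this I would use the hypothesis $B\perp A$. By the equivalence (i)$\Leftrightarrow$(ii) of Theorem \ref{T:charsing}, this is the same as $A:B=0$. From the monotonicity property (c) in Proposition \ref{P:propertiesA:B} (together with positive homogeneity, as already noted in the proof of Theorem \ref{T:charsing}) one has
\begin{equation*}
0\leq (nA):B\leq (nA):(nB)=n(A:B)=0,\qquad n\in\dupN,
\end{equation*}
so $(nA):B=0$ for every $n$. Passing to the pointwise limit and invoking Theorem \ref{T:[A]B=Ba} we conclude
\begin{equation*}
\dual{B_ax}{y}=\dual{[A]Bx}{y}=\limn\dual{((nA):B)x}{y}=0,\qquad x,y\in E,
\end{equation*}
i.e. $B_a=0$. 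Combined with $B=B_a$ this gives $B=0$, which is exactly what we wanted.

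There is essentially no obstacle: the entire argument is a two-step bookkeeping exercise on top of Theorems \ref{T:main_Lebdecomp}, \ref{T:[A]B=Ba}, and \ref{T:charsing}. The only thing to double-check is that the chain $(nA):B\leq n(A:B)$ is already available from the material developed in Section 4 (it is, via Proposition \ref{P:propertiesA:B}(c) and the homogeneity that was explicitly used in the proof of Theorem \ref{T:charsing}), so the proof need not reprove it.
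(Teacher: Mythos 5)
Your proof is correct, and every step is licensed by results already established in the paper: the maximality clause of Theorem \ref{T:main_Lebdecomp} applied to $C=B$ gives $B\leq B_a$ (hence $B=B_a$), the equivalence (i)$\Leftrightarrow$(ii) of Theorem \ref{T:charsing} gives $A:B=0$, the chain $0\leq (nA):B\leq (nA):(nB)=n(A:B)=0$ is exactly the one displayed inside the paper's proof of Theorem \ref{T:charsing} (monotonicity from Proposition \ref{P:propertiesA:B}(c) plus the homogeneity visible from \eqref{E:parallel}), and Theorem \ref{T:[A]B=Ba} then yields $B_a=[A]B=0$. The paper itself prints no proof; the corollary is offered as an immediate consequence of Theorem \ref{T:charsing}, and the one-line route it has in mind is different from yours: by the remark in Section \ref{Sec:3}, $B\ll A$ is equivalent to $\M=\{0\}$, while item (v) of Theorem \ref{T:charsing} says $B\perp A$ is equivalent to $\M=\hilb$; together these force $\hilb=\{0\}$, whence $B=J_B^{}J_B^*=0$. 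Your argument instead replays, on top of the corollary, the (ii)$\Rightarrow$(i) portion of the proof of Theorem \ref{T:charsing}, so it is heavier than necessary but entirely sound; what it buys is independence from the geometric characterization (v), using only the order-theoretic maximality and the parallel-sum calculus. One small economy you missed: once $B_a=0$ is known, the maximality inequality $B\leq B_a$ alone gives $B=0$, so your first paragraph (deducing $B=B_a$ and $B_s=0$) is redundant.
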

\section{Uniqueness of the decomposition}

It was pointed out by Ando \cite{Ando} that the Lebesgue decomposition among positive operators on an infinite dimensional Hilbert space is not unique. Since anti-dual pairs are even more general, we expect the same in our case. The reason why non-uniqueness occurs in the non-commutative integration theory is that absolute continuity is not hereditary: $B\ll A$ and $C\leq B$ do not imply $C\ll A$. In fact, it may even happen that $C\neq 0$ and $C\perp A$. More explicitly,  we have the following result:

\begin{proposition}\label{P:7.1}
Let $A,B$ be positive operators on the weak-* sequentially anti-dual pair $\dual FE$. Suppose that $B$ is $A$-absolutely continuous but not $A$-dominated, i.e., there is no $\alpha\geq 0$ such that $B\leq \alpha A$. Then there is a non-zero positive operator $B'\leq B$ such that $B'\perp A$. 
\end{proposition}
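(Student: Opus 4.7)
The plan is to build $B'$ as a rank-one positive operator determined by a well-chosen vector $y \in F$, using the Douglas-type range description (Theorem~\ref{T:Douglas}) to extract $y$ and the geometric characterization (vii) of Theorem~\ref{T:charsing} to verify mutual singularity.

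First I would reinterpret the hypothesis algebraically. The inequality $B \leq \alpha A$ is equivalent to $\|J_B^* x\|^2 \leq \alpha \|J_A^* x\|^2$ for all $x \in E$, so Theorem~\ref{T:Douglas} identifies ``$B$ is $A$-dominated'' with the range inclusion $\ran J_B \subseteq \ran J_A$. Hence the hypothesis supplies a vector $y \in \ran J_B$ with $y \notin \ran J_A$; necessarily $y \neq 0$. Applying Lemma~\ref{L:adjointrange} to $J_B^*$ yields $m_y \geq 0$ such that $|\dual{y}{x}|^2 \leq m_y \dual{Bx}{x}$ for all $x \in E$, and after replacing $y$ by $y/\sqrt{m_y}$ I may assume $m_y = 1$.

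Next I would construct the candidate: set $B' x \coloneqq \overline{\dual{y}{x}} y$ for $x \in E$. A direct computation gives $\dual{B' x}{z} = \overline{\dual{y}{x}} \dual{y}{z}$, so $B'$ is symmetric, and $\dual{B'x}{x} = |\dual{y}{x}|^2$, so $B'$ is positive (hence automatically in $\lef$), $B' \leq B$ by the normalization above, and $B' \neq 0$ since $y \neq 0$ and the anti-duality separates points.

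Finally I would verify $B' \perp A$ through Theorem~\ref{T:charsing}(vii). Because $y \notin \ran J_A$, Lemma~\ref{L:adjointrange} forbids any estimate $|\dual{y}{x}|^2 \leq m \dual{Ax}{x}$; hence for each $n$ there is $w_n \in E$ with $|\dual{y}{w_n}|^2 > n^2 \dual{A w_n}{w_n}$. This forces $\dual{y}{w_n} \neq 0$, so the rescaling $z_n \coloneqq w_n / \overline{\dual{y}{w_n}}$ gives $\dual{y}{z_n} = 1$ and $\dual{A z_n}{z_n} < 1/n^2$. For arbitrary $x \in E$ the choice $x_n \coloneqq \overline{\dual{y}{x}}\, z_n$ then yields $\dual{y}{x_n} = \dual{y}{x}$, so that $\dual{B'(x - x_n)}{x - x_n} = |\dual{y}{x - x_n}|^2 = 0$, while $\dual{A x_n}{x_n} = |\dual{y}{x}|^2 \dual{A z_n}{z_n} \to 0$. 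Condition (vii) of Theorem~\ref{T:charsing} is therefore met, so $B' \perp A$.

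The one delicate step is the first: converting the algebraic failure of $A$-domination into the genuine range-escape $\ran J_B \not\subseteq \ran J_A$, which is precisely where Douglas' theorem is invoked. After that, both the rank-one construction and the singularity check are elementary applications of the material in Section~2 and Theorem~\ref{T:charsing}. It is worth noting that this argument never actually uses the standing assumption $B \ll A$; it delivers a nonzero $A$-singular piece of any positive operator which is not $A$-dominated.
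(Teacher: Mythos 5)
Your proof is correct, and although it ends up producing essentially the same object as the paper --- a rank-one compression of $B$ in a direction escaping $\ran J_A$ --- it reaches it by a genuinely different route. The paper uses the hypothesis $B\ll A$ to know that $\widehat{B}$ is a densely defined \emph{closed} operator, uses non-domination to conclude it is unbounded, picks $\zeta\in\hilb\setminus\dom\widehat{B}^*$, sets $B'=J_BQ_\zeta J_B^*$, and verifies singularity through the range criterion $\ran J_A\cap\ran J_{B'}=\{0\}$ of Theorem~\ref{T:charsing}(vi) together with Lemma~\ref{L:domBkalap}. You instead convert non-domination directly, via the equivalence (i)$\Leftrightarrow$(ii) of Theorem~\ref{T:Douglas}, into $\ran J_B\not\subseteq\ran J_A$, choose $y\in\ran J_B\setminus\ran J_A$, and work entirely in $F$: Lemma~\ref{L:adjointrange} applied to $J_B^*$ (whose adjoint is $J_B$) yields the bound $\abs{\dual{y}{x}}^2\leq\dual{Bx}{x}$ making $B'x\coloneqq\overline{\dual{y}{x}}\,y$ satisfy $0\neq B'\leq B$, while the \emph{failure} of the analogous bound for $A$ produces explicit sequences verifying criterion (vii) of Theorem~\ref{T:charsing}; the sesquilinearity bookkeeping in the rescalings $z_n=w_n/\overline{\dual{y}{w_n}}$ and $x_n=\overline{\dual{y}{x}}z_n$ is consistent with the convention that $\dual{\cdot}{\cdot}$ is conjugate-linear in its second variable. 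In fact the two constructions coincide: writing $y=J_B\zeta$, your $B'$ is $J_BQ_\zeta J_B^*$ up to a positive scalar, and by Lemma~\ref{L:domBkalap} your condition $y\notin\ran J_A$ is exactly $\zeta\notin\dom\widehat{B}^*$. What your route buys is twofold: it bypasses the linear-relation machinery (closedness of $\widehat{B}$, density of $\dom\widehat{B}^*$) altogether, and, as you correctly note, it never invokes $B\ll A$, so you have in fact proved the stronger statement that \emph{any} positive operator that is not $A$-dominated sits above a nonzero $A$-singular positive operator (a fact which also follows from the paper's version combined with Theorem~\ref{T:main_Lebdecomp}, but your argument is direct). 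The only point worth a word in a final write-up is the normalization: $m_y=0$ would force $y=0$ by the separation property, so one may indeed assume $m_y>0$ before replacing $y$ by $y/\sqrt{m_y}$, and rescaling does not affect membership in the ranges.
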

 \begin{proof}
By assumption, $\widehat{B}$ is a densely defined closed and unbounded operator between $\hila$ and $\hilb$, hence $\dom \widehat{B}^*$ is a proper dense subspace of $\hilb$. Choose a vector $\zeta\in\hilb\setminus \dom \widehat{B}^*$ and denote by $Q_\zeta$ the orthogonal projection onto the one-dimensional subspace $H_\zeta$ generated by $\zeta$. Set $B'\coloneqq J_BQ_\zeta J_B^*$, then  clearly $B'\leq B$. We claim that $B'\perp A$, which is equivalent to $\ran J_A\cap \ran J_{B'}=\{0\}$ by Theorem \ref{T:charsing}. To see this we observe first that $\ran J_{B'}=\ran J_BQ_\zeta$ because of  Theorem \ref{T:Douglas}, namely, 
\begin{equation*}
    \|J_{B'}^*x\|^2_{B'}=\dual{B'x}{x}=\dual{J_BQ_\zeta J_B^*x}{x}=\|Q_\zeta J_B^*x\|_{B}^2,\qquad x\in E.
\end{equation*}
Suppose $f=J_BQ_\zeta\xi$ belongs to $\ran J_A$, which means that $Q_\zeta\xi\in \dom\widehat{B}^*$, according to Lemma \ref{L:domBkalap}. Since we have $H_\zeta\cap \dom \widehat{B}^*=\{0\}$, it follows that $Q_\zeta\xi=0$ and $f=0$, accordingly.
\end{proof}
The next result gives a complete characterization of uniqueness of the Lebesgue decomposition. We mention that this is a direct generalization of Ando's uniqueness result \cite{Ando}*{Theorem 6}. We  also refer the reader to \cite{Hassilebesgue2018}*{Theorem 7.8} and \cite{Hassi2009}*{Theorem 4.6}.
\begin{theorem}\label{T:lebunique}
Let $\dual FE $ be a weak-* sequentially complete anti-dual pair and let $A,B\in \lef$ be positive operators.  The following statements are equivalent:
\begin{enumerate}[label=\textup{(\roman*)}, labelindent=\parindent]
\item the Lebesgue-decomposition of $B$ into $A$-absolutely continuous and $A$-singul\-ar parts  is unique,
\item $\dom \widehat{B}^*\subseteq \hilb$ is closed,
\item $\widehat{B}_{\textup{reg}}$ is norm continuous between $\hila$ and $\hilb$,
\item $B_a\leq \alpha A$ for some $\alpha\geq0$,
\item $J_B\langle \M^{\perp}\rangle\subseteq \ran J_A$.
\end{enumerate}
\end{theorem}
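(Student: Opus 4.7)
The plan is to prove the equivalences in two blocks: the structural chain (ii)$\Leftrightarrow$(iii)$\Leftrightarrow$(iv)$\Leftrightarrow$(v), and then the uniqueness link via (iv)$\Rightarrow$(i) and the contrapositive (i)$\Rightarrow$(ii).

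For the structural block, (ii)$\Leftrightarrow$(v) is immediate from Lemma~\ref{L:domBkalap} together with $\overline{\dom\widehat{B}^*}=\M^\perp$ from \eqref{E:muldom}: condition (v) translates to $\M^\perp\subseteq\dom\widehat{B}^*$, which with the automatic reverse inclusion gives $\dom\widehat{B}^*=\M^\perp$. For (ii)$\Leftrightarrow$(iii), I view $\widehat{B}^*$ as a closed, densely defined operator from $\M^\perp\subseteq\hilb$ to $\hila$; its Hilbert-space adjoint (in the sense of operators between Hilbert spaces) coincides with $\widehat{B}_\textup{reg}$ by the standard linear relation theory of Subsection~\ref{Sub:linrel}. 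The closed graph theorem then yields at once that $\dom\widehat{B}^*$ is closed iff $\widehat{B}^*$ is bounded iff $\widehat{B}_\textup{reg}$ is bounded. For (iii)$\Leftrightarrow$(iv), the factorization \eqref{E:BJABJA} delivers $\dual{B_ax}{x}=\|\widehat{B}_\textup{reg}J_A^*x\|_B^2$ and $\dual{Ax}{x}=\|J_A^*x\|_A^2$; since $\ran J_A^*=\ran A$ is dense in $\hila$, boundedness of $\widehat{B}_\textup{reg}$ (with norm squared at most $\alpha$) is synonymous with $B_a\leq\alpha A$.

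The implication (iv)$\Rightarrow$(i) uses the maximality of $B_a$: given a second decomposition $B=B_1+B_2$ with $B_1\ll A$ and $B_2\perp A$, Theorem~\ref{T:main_Lebdecomp} gives $B_1\leq B_a$, so $D:=B_a-B_1\geq 0$ satisfies both $D\leq\alpha A$ (hence $D\ll A$) and $D=B_2-B_s\leq B_2$. Since $B_2\perp A$, any positive operator that is both $\leq B_2$ and absolutely continuous is also $\perp A$, and the corollary after Theorem~\ref{T:charsing} forces $(B_2)_a=0$; the maximality of $(B_2)_a$ among $\ll\!A$ operators dominated by $B_2$ then forces $D\leq 0$, yielding $B_1=B_a$ and $B_2=B_s$.

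For (i)$\Rightarrow$(ii), I argue by contraposition. Assume $\dom\widehat{B}^*$ is not closed, pick $\zeta\in\M^\perp\setminus\dom\widehat{B}^*$, and let $Q_\zeta$ denote the orthogonal projection in $\hilb$ onto $\Span\{\zeta\}$. Since $Q_\zeta P=0$, $P':=P+Q_\zeta$ is an orthogonal projection and $B=J_B(I-P')J_B^*+J_BP'J_B^*$ is a rival decomposition that differs from $(B_a,B_s)$ by $J_BQ_\zeta J_B^*\neq 0$ (using injectivity of $J_B$). Singularity of $J_BP'J_B^*$ reduces, via the identification $\hil_{J_BP'J_B^*}\cong\ran P'=\M\oplus\Span\{\zeta\}$ together with Lemma~\ref{L:domBkalap} and Theorem~\ref{T:charsing}, to checking $(\M\oplus\Span\{\zeta\})\cap\dom\widehat{B}^*=\{0\}$, which is immediate from $\dom\widehat{B}^*\subseteq\M^\perp$ and $\zeta\notin\dom\widehat{B}^*$. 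Absolute continuity of $J_B(I-P')J_B^*$ is the delicate step: given $(x_n)$ with $Ax_n\to 0$ and $(I-P-Q_\zeta)Bx_n\to\eta$ in $\hilb$, I would track the scalars $c_n:=\sipb{Bx_n}{\zeta}/\|\zeta\|_B^2$ along a subsequence with $c_n\to c\in\dupC\cup\{\infty\}$. If $c$ is finite, then $(I-P)Bx_n=(I-P-Q_\zeta)Bx_n+c_n\zeta\to\eta+c\zeta$, and the absolute continuity of $B_a$ forces this limit to be zero; the orthogonality $\eta\perp\zeta$ then gives $\eta=0$. If $|c_n|\to\infty$, then rescaling $\tilde x_n:=x_n/c_n$ yields $A\tilde x_n\to 0$ while $(I-P)B\tilde x_n\to\zeta\neq 0$, contradicting $B_a\ll A$. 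The main obstacle is exactly this absolute continuity verification: precisely because $\zeta\notin\dom\widehat{B}^*$, the scalars $c_n$ cannot be controlled by $\dual{Ax_n}{x_n}$, and the divergent case must be handled by the rescaling trick to produce an explicit violation of $B_a\ll A$.
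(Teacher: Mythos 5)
Your proposal is correct, and it follows the paper's overall strategy --- the same rank-one enlargement $P_1=P+Q_\zeta$ with $\zeta\in\M^{\perp}\setminus\dom\widehat{B}^*$ for (i)$\Rightarrow$(ii), the same use of Lemma \ref{L:domBkalap}, Theorem \ref{T:Douglas} and Theorem \ref{T:charsing} (vi) for the singularity of the rival part, and the factorization \eqref{E:BJABJA} plus the closed graph theorem for the structural equivalences --- but it deviates in a few steps, all soundly. The main difference is the proof that $B_1=J_B^{}(I-P-Q_\zeta)J_B^*$ is $A$-absolutely continuous: the paper argues abstractly that $(I-Q_\zeta)T$ is closable because $\dom(T^*(I-Q_\zeta))=H_\zeta\oplus[\dom T^*\cap H_\zeta^{\perp}]$ is dense (a dense subspace being dense in every finite co-dimensional subspace), whereas you verify closability by hand through the coefficients $c_n=\sipb{Bx_n}{\zeta}/\|\zeta\|_B^2$, with the bounded/unbounded dichotomy and the rescaling $\tilde{x}_n=x_n/c_n$ manufacturing an explicit violation of $B_a\ll A$ in the divergent case; this computation is correct (in the bounded case the orthogonality $\eta\perp\zeta$, coming from $\eta\in\ran(I-P-Q_\zeta)$, indeed forces $\eta=0$, and since $\eta$ is fixed, one convergent subsequence of $(c_n)$ suffices), and it makes concrete the density argument the paper leaves abstract. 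Your reorganization of the implication graph is also legitimate: (ii)$\Leftrightarrow$(v) directly via Lemma \ref{L:domBkalap} and $\overline{\dom\widehat{B}^*}=\M^{\perp}$ (the paper instead derives (iv)$\Leftrightarrow$(v) from Theorem \ref{T:Douglas}), the two-way (ii)$\Leftrightarrow$(iii) via the duality that the Hilbert-space adjoint of $\widehat{B}^*$ into $\M^{\perp}$ is exactly $\Breg$ (the paper only runs a one-directional cycle (i)$\Rightarrow$(ii)$\Rightarrow$(iii)$\Rightarrow$(iv)$\Rightarrow$(i)), and (iv)$\Rightarrow$(i) through $(B_2)_a=0$ and the maximality clause of Theorem \ref{T:main_Lebdecomp} instead of the paper's direct appeal to the definition of singularity applied to $0\leq B_2-B_s\leq\alpha A$, $B_2-B_s\leq B_2$. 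One point worth stating explicitly in your (iii)$\Leftrightarrow$(iv): density of $\ran A$ alone is not quite enough --- you also need that $\ran A$ is a core for $\Breg$, which is precisely Proposition \ref{P:closable}, so the equivalence holds once that is cited.
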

\begin{proof}
We start by proving that (i) implies (ii). Suppose therefore that $\dom \widehat B^*$ is not closed and consider a unit vector $\zeta \in\M^{\perp}\setminus \dom\widehat{B}^*$. Denote by $Q_\zeta$ the orthogonal projection onto the one dimensional subspace $\hil_\zeta$ spanned by $\zeta$. Then $P_1\coloneqq P+Q_\zeta$ is a projection and 
\begin{equation*}
    B_1\coloneqq J_B^{}(I-P_1)J_B^*,\qquad B_2\coloneqq J_BP_1J_B^*
\end{equation*}
are positive operators from $E$ to $F$ such that $B_1+B_2=B$. Clearly, $B_1\neq B_a$ and $B_2\neq B_s$. We claim that $B_1\ll A$ and $B_2\perp A$. Since the map 
\begin{equation*}
    T(Ax)\coloneqq (I-P)Bx,\qquad x\in E
\end{equation*}
defines a closable operator between $\hila$ and $\hilb$, it follows that $(I-Q_\zeta)T$ is closable too. Indeed, 
\begin{equation*}
    \dom(T^*(I-Q_\zeta))=H_\zeta\oplus [\dom T^*\cap H_\zeta^\perp],
\end{equation*}
and it is known that a dense subspace of a Hilbert space is dense in every finite co-dimensional subspace. Consequently, $((I-Q_\zeta)T)^*$ is densely defined and hence $(I-Q_\zeta)T$ is closable. A straightforward calculation shows that \begin{equation*}
    \|(I-Q_\zeta)T(Ax)\|_B^2=\dual{B_1x}{x},\qquad x\in E,
\end{equation*}
whence it follows that $B_1\ll A$. To check that  $B_2\perp A$ we argue as in the proof of Proposition \ref{P:7.1}. First we observe that $\ran J_{B_2}=\ran J_B^{}(P+Q_\zeta)$ by Theorem \ref{T:Douglas}. Furthermore, if $f=J_B^{}(P+Q_\zeta)\xi\in\ran J_A$ then $(P+Q_\zeta)\xi\in\dom \widehat{B}^*$, according to Lemma \ref{L:domBkalap}. But we have $\ran (P+Q_\zeta)\cap \dom \widehat{B}^*=\{0\}$, hence $f=0$. Consequently, $\ran J_A\cap \ran J_{B_2}=\{0\}$, and therefore $B_2\perp A$. Summing up, $B=B_1+B_2$ is a Lebesgue decomposition of $B$ with respect to $A$ that differs from the canonical Lebesgue decomposition $B=B_a+B_s$, i.e., the Lebesgue decomposition is not unique. 

To prove that (ii) implies (iii) assume that $\dom \widehat{B}^*$ is a closed subspace of $\hilb$, then $\widehat{B}^*$ is bounded by the  closed graph theorem. The same holds true for $\widehat{B}_{\textup{reg}}$. If $\widehat{B}_{\textup{reg}}$ is bounded, then from \eqref{E:BJABJA} we conclude that $B_a=J_A\widehat{B}_{\textup{reg}}^*\widehat{B}_{\textup{reg}}J_A^*$, and therefore $B_a\leq \alpha A$ with $\alpha\coloneqq \|\widehat{B}_{\textup{reg}}\|^2$. Hence (iii) implies (iv). Note that (iv) is equivalent to $\ran J_B^{}(I-P)\subseteq \ran J_A$ in virtue of Theorem \ref{T:Douglas}, hence (iv) and (v) are equivalent. Assume finally (iv) and let $B_1+B_2$ be any Lebesgue decomposition of $B$ with respect to $A$, where $B_1\ll A$ and $B_2\perp A$. By Theorem \ref{T:main_Lebdecomp}, $B_2\leq B_a$, hence $0\leq B_a-B_1\leq B_a\leq \alpha A$. Consequently, $0\leq B_2-B_s=(B_a-B_1)\leq \alpha A$ and $B_2-B_s\leq B_2$, and therefore $B_2-B_s=0$ by singularity. This means that $B_2=B_s$ and $B_1=B_0$, proving that the Lebesgue decomposition is unique. The proof is complete.     
\end{proof}

Below we give a sufficient condition on an operator $A$ such that the $A$-Lebesgue decomposition of every operator $B$ be unique. 
\begin{lemma}
Let  $A$  be a  positive operator on a weak-* sequentially complete anti-dual pair $\dual FE$.  The following assertions are equivent:
\begin{enumerate}[label=\textup{(\roman*)}, labelindent=\parindent]
  \item  $\ran A$ is weak-* sequentially closed in $F$,
  \item $\ran A$ is a Hilbert space under the inner product $\sipa{\cdot}{\cdot}$.
\end{enumerate}
\end{lemma}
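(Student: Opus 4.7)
The plan is to re-read both conditions as statements about the canonical map $J_A:\hila\to F$ and the dense inclusion $\ran A\subseteq\hila$ supplied by Subsection \ref{Sub:factorizepositive}. First I would record two generalities: (a) since $(\ran A,\sipa{\cdot}{\cdot})$ is by construction a dense subspace of the Hilbert space $\hila$, condition (ii) — that $\ran A$ is already complete in this inner product — is equivalent to the set-theoretic equality $\ran A=\hila$; and (b) $J_A$ is injective, because $J_A\eta=0$ forces $\sipa{\eta}{Ax}=\dual{J_A\eta}{x}=0$ for every $x\in E$, and $\ran A$ is dense in $\hila$.

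For (i)$\Rightarrow$(ii), I would pick a $\sipa{\cdot}{\cdot}$-Cauchy sequence $(Ax_n)$ in $\ran A$ and let $\xi\in\hila$ be its norm limit. Weak continuity of $J_A$ makes it continuous from the norm topology of $\hila$ into the weak-* topology of $F$, so $Ax_n=J_A(Ax_n)\to J_A\xi$ weak-* in $F$. Hypothesis (i) forces $J_A\xi\in\ran A$, say $J_A\xi=Ax_0=J_A(Ax_0)$, and the injectivity of $J_A$ then gives $\xi=Ax_0\in\ran A$. Hence every Cauchy sequence in $\ran A$ converges there, i.e.\ (ii) holds.

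For (ii)$\Rightarrow$(i), assume $\ran A=\hila$ and let $Ax_n\to f$ weak-* in $F$. Using the defining identity $\dual{Ax_n}{y}=\sipa{Ax_n}{Ay}$, and the fact that by (ii) every $\eta\in\hila$ is of the form $Ay$ for some $y\in E$, one sees that $\sipa{Ax_n}{\eta}$ is a convergent, hence bounded, numerical sequence for each $\eta\in\hila$. Applying the uniform boundedness principle on the Banach space $\hila$ (this is where completeness is essential) yields $\sup_n\|Ax_n\|_A<\infty$. Weak sequential compactness of bounded sets in a Hilbert space then extracts a subsequence $Ax_{n_k}\to\xi$ weakly in $\hila$, with $\xi\in\hila=\ran A$ again by (ii), so $\xi=Ax_0$ for some $x_0\in E$. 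Weak continuity of $J_A$ propagates this to $Ax_{n_k}\to J_A\xi=Ax_0$ weak-* in $F$, and since the original weak-* limit must coincide with this, $f=Ax_0\in\ran A$.

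The only genuine analytic input is the uniform boundedness principle, and the asymmetric use of it is exactly what ties (ii) — completeness of $(\ran A,\sipa{\cdot}{\cdot})$ — to the weak-* sequential closedness in (i). Everything else is a matter of translating between the inner product on $\hila$, the anti-duality pairing on $F\times E$, and the action and injectivity of $J_A$, which I expect to be bookkeeping rather than a real obstacle.
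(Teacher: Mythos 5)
Your proposal is correct and takes essentially the same route as the paper: in the direction (i)$\Rightarrow$(ii) both arguments combine the injectivity of $J_A$ (via density of $\ran A$ in $\hila$) with its norm-to-weak-* continuity to pull the limit of a Cauchy sequence back into $\ran A$, and in the direction (ii)$\Rightarrow$(i) both are Banach--Steinhaus arguments on $\hila=\ran A$. The only cosmetic difference is that the paper represents the pointwise limit of the functionals $Ax\mapsto\sipa{Ax_n}{Ax}$ directly by the Riesz theorem, whereas you first extract norm-boundedness and then a weak limit by weak sequential compactness --- a detour that is in fact unnecessary, since your own observation that $\sipa{Ax_n}{\eta}$ converges for \emph{every} $\eta\in\hila$ already yields weak convergence of the whole sequence.
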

\begin{proof}
Assume first that  $\ran A$ is weak-* sequentially closed in $F$. We are going to show that $\hila=\ran A$. It suffices to show that $J_A$ coincides with its restriction to $\ran A$. That will be obtained by showing that $\ker J_A\subseteq\ker (J_A|_{\ran A})$ and $\ran J_A\subseteq\ran (J_A|_{\ran A})$. The first inclusion is clear because $J_A$ is injective:
\begin{equation*}
\ker J_A=(\ran J_A^*)^{\perp}=(\ran A)^{\perp}=\{0\}.
\end{equation*}
The second range inclusion follows from the fact that $\ran J_A$ is contained in the weak-* sequential closure of the range of $J_A|_{\ran A}$ in $F$, that is identical with  $\ran A$ by (i). This proves that (i) implies (ii). Assume conversely that $\ran A=\hila$ and let $f$ belong to the weak-* sequential closure of $\ran A$ in $F$. Choose a sequence $\seq{x}$ such that 
\begin{eqnarray*}
\dual{f}{x}=\limn \dual{Ax_n}{x},\qquad\textrm{$x\in E$}.
\end{eqnarray*}
For every  $n$ let us define a continuous conjugate linear functional $\phi_n:\hila\to\dupC$ by 
\begin{eqnarray*}
\phi_n(Ax)\coloneqq \sipa{Ax_n}{Ax}=\dual{Ax_n}{x}, \qquad x\in E.
\end{eqnarray*}
Then $\seq \phi$ converges pointwise to some bounded conjugate linear functional  $\phi:\hila\to\dupC$, because of the Banach--Steinhaus theorem. By the Riesz representation theorem, there exists $z\in E$ such that   $\phi(Ax)=\sipa{Az}{Ax}$, $x\in E$, and therefore
\begin{eqnarray*}
\dual{f}{x}=\sipa{Az}{Ax}=\dual{Az}{x},\qquad x\in E.
\end{eqnarray*}
Consequently, $f=Az\in\ran A$.
\end{proof}
\begin{theorem}
Let  $A$  be a  positive operator on a weak-* sequentially complete anti-dual pair $\dual FE$. If the range of $A$ is weak-* sequentially closed then every  positive operator $B\in\lef$ admits a unique Lebesgue decomposition with respect to $A$.
\end{theorem}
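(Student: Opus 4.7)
The plan is to reduce uniqueness to condition (v) of Theorem~\ref{T:lebunique}, namely $J_B\langle \M^\perp\rangle \subseteq \ran J_A$, and verify that condition using the preceding lemma together with the weak continuity of $J_B$.

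First I would invoke the preceding lemma: since $\ran A$ is weak-* sequentially closed in $F$, we obtain $\ran A=\hila$, i.e., the pre-Hilbert space $(\ran A,\sipa\cdot\cdot)$ is already complete. Because $J_A$ extends the inclusion of $\ran A$ into $F$ and $\hila=\ran A$, it follows that $\ran J_A = \ran A$, and in particular $\ran J_A$ is weak-* sequentially closed in $F$.

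Next I would verify condition (v) of Theorem~\ref{T:lebunique}. Fix $\xi\in\M^{\perp}$. Since $\M^{\perp}=\overline{\dom\widehat B^*}$, pick a norm-convergent approximating sequence $(\xi_n)_{n\in\dupN}\subseteq \dom\widehat B^*$ with $\xi_n\to\xi$ in $\hilb$. By Lemma~\ref{L:domBkalap} each $J_B\xi_n$ lies in $\ran J_A=\ran A$. Because $J_B\in\mathscr L(\hilb,F)$ is weakly continuous and norm convergence implies weak convergence in $\hilb$, we have $J_B\xi_n\to J_B\xi$ in the weak-* topology $\sigfe$. Thus $J_B\xi$ is the weak-* sequential limit of a sequence in $\ran A$, which by hypothesis is weak-* sequentially closed; hence $J_B\xi\in\ran A=\ran J_A$. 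This establishes (v), and Theorem~\ref{T:lebunique} then yields the uniqueness of the Lebesgue decomposition of $B$ with respect to $A$.

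The main step to be careful about is the passage from $\xi_n\to\xi$ in norm to $J_B\xi_n\to J_B\xi$ in the weak-* topology of $F$; this uses the weak continuity of $J_B$ together with the fact that norm convergence implies weak convergence in $\hilb$. Once this is in place, the weak-* sequential closedness of $\ran A$ — which is exactly what the hypothesis and the preceding lemma provide — closes the argument without further calculation.
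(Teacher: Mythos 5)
Your proof is correct, but it takes a different leg of the equivalence in Theorem~\ref{T:lebunique} than the paper does. After the common first step (the preceding lemma gives $\hila=\ran A$, so $\ran J_A=\ran A$), the paper verifies condition (iii): since $\widehat{B}_{\textup{reg}}$ is a closed operator whose domain contains $\ran A=\hila$, it is everywhere defined and hence bounded by the closed graph theorem. You instead verify condition (v), $J_B\langle\M^{\perp}\rangle\subseteq\ran J_A$, directly: approximate $\xi\in\M^{\perp}=\overline{\dom\widehat{B}^*}$ in norm by $\xi_n\in\dom\widehat{B}^*$, use Lemma~\ref{L:domBkalap} to place each $J_B\xi_n$ in $\ran J_A=\ran A$, and pass to the limit — your key convergence step is sound, since for each $x\in E$ one has $\dual{J_B\xi_n}{x}=\sipb{\xi_n}{J_B^*x}\to\sipb{\xi}{J_B^*x}=\dual{J_B\xi}{x}$ by Cauchy--Schwarz, so $J_B\xi_n\to J_B\xi$ in $\sigfe$ and the weak-* sequential closedness of $\ran A$ closes the argument. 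The paper's route is slightly shorter and purely operator-theoretic (one application of the closed graph theorem); yours is more transparent about \emph{where} the sequential closedness hypothesis acts, using it twice — once through the lemma to identify $\ran J_A$ with $\ran A$, and once to absorb the weak-* limit — and it makes the range inclusion (v) explicit rather than inferring it through the boundedness of $\widehat{B}_{\textup{reg}}$. Both arguments depend on the same two ingredients (the preceding lemma and Theorem~\ref{T:lebunique}), so the difference is one of mechanism, not of scope.
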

\begin{proof}
 According to the preceding lemma,  $\ran A$ is complete under the inner product $\sipa\cdot\cdot$, i.e., $\ran A=\hila$. The closed operator $\widehat{B}_{\textup{reg}}$ is everywhere defined on $\hila$ and thus bounded by the closed graph theorem. The Lebesgue decomposition of $B$ with respect to $A$ is unique by Theorem \ref{T:lebunique}. 
\end{proof}
 
 \section{Applications}
  To conclude the paper we apply the developed decomposition theory to some concrete objects including  Hilbert space operators, Hermitian forms, representable functionals, and additive set functions.
 \subsection{Positive operators on Hilbert spaces}
 Let $\hil$ be a complex Hilbert space with inner product $\sip\cdot\cdot$, then $\dual\hil\hil$ forms a anti-dual pair with $\dual{\cdot}{\cdot}\coloneqq \sip{\cdot}{\cdot}$. An immediate application of the Banach--Steinhaus theorem shows that $\dual\hil\hil$ is weak-* sequentially complete, thus everything what has been said so far remains valid for $\dual{\hil}{\hil}$ and the positive operators on it. 
 
 We shortly summarize Ando's main results \cite{Ando}*{Theorem 2 and 6} in a statement. The proof follows immediately from Theorem \ref{T:main_Lebdecomp}, \ref{T:almostdom=ac} and \ref{T:lebunique}.
 \begin{theorem}
 Let $A,B$ be bounded positive operators on a complex Hilbert space $\hil$ and let $B_a\coloneqq \limn (nA):B$ where the limit is taken in the strong operator topology and let $B_s\coloneqq B-B_a$. Then
 \begin{equation}\label{E:Andocorollary}
     B=B_a+B_s
 \end{equation}
is a Lebesgue-type decomposition, i.e.,  $B_a$ is $A$-absolutely continuous and  $B_s$ is $A$-singular.  $B_a$ is maximal among those positive operators $C\geq0$ such that $C\leq B$ and $C\ll A$. The Lebesgue decomposition \eqref{E:Andocorollary} is unique if and only if $B_a\leq \alpha A$ for some constant $\alpha\geq0$.  
 \end{theorem}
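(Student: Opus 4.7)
The plan is to deduce this as a direct specialization of the general theory to the concrete anti-dual pair $\dual{\hil}{\hil}$, where the anti-duality is simply the inner product of $\hil$. The paper already observes (in the preliminaries and the opening of Section 8) that $\dual{\hil}{\hil}$ is a weak-* sequentially complete anti-dual pair: weak-* convergence in $F=\hil$ coincides with weak convergence of vectors, and the Banach--Steinhaus theorem guarantees sequential completeness. Moreover, under this identification, the weakly continuous operators $\lef$ are exactly the bounded linear operators on $\hil$, symmetric operators coincide with bounded self-adjoint operators, and the notions of positivity, $\ll$, $\perp$ and $:$ agree with their Hilbert-space counterparts. With this dictionary fixed, every hypothesis needed to invoke Theorems \ref{T:main_Lebdecomp}, \ref{T:[A]B=Ba} and \ref{T:lebunique} is satisfied.

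First I would apply Theorem \ref{T:[A]B=Ba} to $A$ and $B$: it yields a positive operator $[A]B\in\bh$ defined as the pointwise weak limit of the monotone sequence $\bigl((nA):B\bigr)_{n\in\dupN}$, and it identifies this limit with the absolutely continuous part $B_a=J_B^{}(I-P)J_B^*$ coming from Theorem \ref{T:main_Lebdecomp}. To upgrade the weak pointwise limit to a strong operator topology limit, I would note that Proposition \ref{P:propertiesA:B} gives $(nA):B\leq ((n+1)A):B\leq B$, so the sequence is monotone increasing and uniformly bounded above by $B$ in the usual operator order. It is a standard fact that such a sequence of bounded self-adjoint operators converges in the strong operator topology to its weak limit, hence $(nA):B\to B_a$ strongly. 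Therefore $B_s\coloneqq B-B_a$ is well defined and, being the strong limit of the positive operators $B-(nA):B$, is itself positive.

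Next, invoking Theorem \ref{T:main_Lebdecomp} directly in the anti-dual pair $\dual{\hil}{\hil}$, the operator $B_a$ is $A$-absolutely continuous, the operator $B_s=J_BPJ_B^*$ is $A$-singular, and $B_a$ is the maximum of the set $\set{C\in\bh}{0\le C\le B,\ C\ll A}$; this gives the Lebesgue-type decomposition and the maximality assertion in one stroke. Finally, the uniqueness clause is precisely the specialization of the equivalence (i)$\Leftrightarrow$(iv) of Theorem \ref{T:lebunique} to $\dual{\hil}{\hil}$: the Lebesgue decomposition of $B$ with respect to $A$ is unique if and only if $B_a\le \alpha A$ for some $\alpha\ge 0$.

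I do not expect any serious obstacle; the only point that requires a brief comment rather than an outright appeal is the promotion of pointwise weak convergence in Theorem \ref{T:[A]B=Ba} to strong operator convergence, and that is handled by the classical monotone convergence principle for bounded self-adjoint operators. Everything else is a direct transcription of the already established anti-dual-pair results into Hilbert space language.
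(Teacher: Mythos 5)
Your proposal is correct and follows essentially the same route as the paper, which likewise obtains the theorem as an immediate specialization of Theorems \ref{T:main_Lebdecomp}, \ref{T:[A]B=Ba} (via \ref{T:almostdom=ac}) and \ref{T:lebunique} to the weak-* sequentially complete anti-dual pair $\dual{\hil}{\hil}$. Your explicit upgrade of the pointwise weak limit from Lemma \ref{L:monoton} to strong operator convergence, using the classical monotone convergence principle for bounded self-adjoint operators, fills in a step the paper leaves implicit but is entirely consistent with its argument.
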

\subsection{Nonnegative forms.} Let $\mathfrak{D}$ be a complex vector space and let $\tform$, $\wform$ be nonnegative Hermitian forms on it. Let us denote by $\bar{\mathfrak{D}}^*$ the algebraic dual space of $\mathfrak{D}$, then  $\dual{\bar{\mathfrak{D}}^*}{\mathfrak D}$ forms a weak-* sequentially complete anti-dual pair and 
\begin{equation*}
    \dual{Tx}{y}\coloneqq \tform(x,y),\qquad \dual{Wx}{y}\coloneqq \wform(x,y),\qquad x,y\in\mathfrak D
\end{equation*}
define two positive operators $T,W:\mathfrak D\to \bar{\mathfrak{D}}^*$. We recall that the form $\tform$ is called $\wform$-almost dominated if there is a  monotonically nondecreasing sequence of forms $\tform_n$ such that $\tform_n\leq \alpha_n \wform$ for some $\alpha_n\geq0$ and $\tform_n\to\tform$ pointwise. Similarily, $\tform$ is called $\wform$-closable if for every sequence $\seq x$ of $\mathfrak{D}$ such that $\wform(x_n,x_n)\to0$ and $\tform(x_n-x_m,x_n-x_m)\to0$ it follows that $\tform(x_n,x_n)\to0$. 

It is immediate to conclude that the form $\tform$ is $\wform$-closable if and only if the operator $T$ is $W$-absolutely continuous. Similarly, $\tform$ is $\wform$-almost dominated precisely  when $T$ is $W$-almost dominated. Consequently, from Theorem \ref{T:almostdom=ac} it follows that the notions of closability and almost dominatedness are equivalent (cf. also \cite{Hassi2009}*{Theorem 3.8}). The map $\tform\mapsto T$ between nonnegative hermiatian forms and positive operators on $\mathfrak{D}$ is a bijection, so from Theorem \ref{T:main_Lebdecomp} and \ref{T:lebunique} we conclude the following result (see \cite{Hassi2009}*{Theorem 2.11 and 4.6}):
\begin{theorem}
 Let $\tform, \wform$ be nonnegative Hermitian forms on a complex vector space $\mathfrak{D}$ and let $\tform_a(x,x)\coloneqq \limn ((n\tform):\sform)(x,x)$, $ x\in\mathfrak{ D}$ and  $\tform_s\coloneqq \tform-\tform_a$. Then
 \begin{equation}\label{E:hassicorollary}
     \tform=\tform_a+\tform_s
 \end{equation}
is a Lebesgue-type decomposition of $\tform$ with respect to $\wfrom$, i.e.,  $\tform_a$ is $\wform$-absolutely continuous and  $\tform_s$ is $\wform$-singular. Furthermore, $\tform_a$ is maximal among those forms $\sform$ such that $\sform\leq \tform$ and $\sform\ll \wform$. The Lebesgue decomposition \eqref{E:hassicorollary} is unique if and only if $\tform_a\leq \alpha \wform$ for some constant $\alpha\geq0$. \end{theorem}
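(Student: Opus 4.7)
The plan is to transport the statement along the bijection $\tform\mapsto T$ between nonnegative Hermitian forms on $\mathfrak{D}$ and positive operators on the weak-* sequentially complete anti-dual pair $\dual{\bar{\mathfrak D}^*}{\mathfrak D}$, and then invoke the three master results (Theorems \ref{T:main_Lebdecomp}, \ref{T:[A]B=Ba} and \ref{T:lebunique}) already established for this context. The correspondence $\dual{Tx}{y}\coloneqq\tform(x,y)$ is clearly order preserving ($\sform\leq\tform \Leftrightarrow S\leq T$), hence also preserves the lattice-theoretic notions built from it, in particular mutual singularity.

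First I would verify that the form-theoretic closability/singularity concepts translate correctly. That $\tform$ is $\wform$-closable iff $T\ll W$ is already noted in the excerpt; the analogous equivalence $\tform\perp\wform\Leftrightarrow T\perp W$ is immediate from the definitions and order-preservation. Next I would identify the parallel sum of forms with the parallel sum of operators: by the variational formula \eqref{E:parallel}, $\dual{((nT):W)x}{x}$ equals the infimum defining the form $(n\tform):\wform$ evaluated at $x$, so the two notions coincide pointwise on the diagonal. Applying Theorem \ref{T:[A]B=Ba} with $A\coloneqq W$ and $B\coloneqq T$ yields that the operator $T_a\coloneqq [W]T=\limn (nW):T$ (pointwise weak limit, which on the diagonal is an ordinary numerical limit) is exactly $J_T(I-P)J_T^*$, and it coincides with the form $\tform_a$ of the statement by polarization.

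With $\tform_a$ thus identified with the $W$-absolutely continuous part of $T$, Theorem \ref{T:main_Lebdecomp} gives at once: $T_a\ll W$, $T_s\perp W$, $T=T_a+T_s$, and $T_a$ is maximal among positive operators dominated by $T$ and absolutely continuous with respect to $W$. Translating back via the bijection, this is exactly the existence, absolute continuity/singularity, and maximality claim for $\tform=\tform_a+\tform_s$. For the uniqueness claim, Theorem \ref{T:lebunique} provides the equivalence between uniqueness of the decomposition and $T_a\leq \alpha W$ for some $\alpha\geq 0$, which corresponds to $\tform_a\leq \alpha\wform$.

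The only technical point that needs care -- and which I regard as the main obstacle -- is the faithful translation of parallel addition between the form world and the operator world, because Ando's Hilbert-space formula \eqref{E:A:B-Hilbert} and the form-theoretic parallel sum of Hassi--Sebesty\'en--de~Snoo have the same infimum description but are defined on different underlying carriers; here one has to match them through the identification $\dual{Tx}{y}=\tform(x,y)$ and polarization. Once this identification is in place, everything else follows mechanically by quoting the three master theorems, and there is no need to reprove anything on the form side.
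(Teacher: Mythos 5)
Your proposal is correct and follows essentially the same route as the paper, which likewise obtains this theorem by transporting forms to positive operators on the weak-* sequentially complete anti-dual pair $\dual{\bar{\mathfrak{D}}^*}{\mathfrak{D}}$ and then quoting Theorems \ref{T:main_Lebdecomp}, \ref{T:[A]B=Ba} and \ref{T:lebunique}; since the correspondence $\tform\mapsto T$ is a bijection onto \emph{all} positive operators here (unlike in the representable-functional setting), singularity and maximality translate without further argument, exactly as you use them. Two small remarks: you correctly read the statement's $\tform_a$ as $\limn\,((n\wform):\tform)(x,x)$ despite the paper's typo ``$(n\tform):\sform$'', and the parallel-sum identification you single out as the main obstacle is in fact immediate, since the variational formula \eqref{E:parallel} evaluated for the operators $T,W$ is verbatim the infimum defining the parallel sum of the forms $\tform,\wform$.
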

\subsection{Representable functionals.} Let  $\alg$ be a $^*$-algebra (with or without unit), i.e., an algebra endowed with an involution. A functional $f:\alg\to\dupC$ is called representable if there is a triple $(\hil_f,\pi_f,\zeta_f)$ such that $\hil_f$ is a Hilbert space, $\zeta_f\in\hil_f$ and $\pi_f:\alg\to\balg(\hil_f)$ is a *-algebra homomorphism such that
\begin{equation*}
    f(a)=\sipf{\pi_f(a)\zeta_f}{\zeta_f},\qquad a\in\alg.
\end{equation*}
A straightforward verification shows that every representable functional $f$ is positive hence the map $A:\alg\to \bar \alg^*$ defined by 
\begin{equation}
    \dual{Aa}{b}\coloneqq f(b^*a),\qquad a,b\in\alg
\end{equation}
is a positive operator. (Note however that not every positive operator $A$ arises from a representable functional $f$ in the above way.)
Denote by $\hila$ the corresponding auxiliary Hilbert space. It is easy to show that $\pi_f:\alg\to\bha$, $a\mapsto \pi_f(a)$ is a *-homomorphism, where the bounded operator $\pi_f(a)$ arises from the densely defined one given by
\begin{equation*}
    \pi_f(a)(Ab)\coloneqq A(ab),\qquad b\in\alg.
\end{equation*}
It follows from the representability of $f$ that $\abs{f(a)}^2\leq Cf(a^*a)$, $a\in\alg$, for some constant $C\geq0$ and hence 
\begin{equation*}
    Aa\mapsto f(a),\qquad a\in\alg
\end{equation*}
defines a continuous linear functional from $\ran A\subseteq \hila$ to $\dupC$. The corresponding representing functional $\zeta_f$ satisfies
\begin{equation*}
    \sipa{Aa}{\zeta_f}=f(a),\qquad a\in\alg,
\end{equation*}
and admits the useful property $\pi_f(a)\zeta_f=Aa$. It follows therefore that 
\begin{equation*}
    f(a)=\sipa{\pi_f(a)\zeta_f}{\zeta_f},\qquad a\in\alg.
\end{equation*}
Let $g$ be another representable functional on $\alg$. We say that $g$ is $f$-absolutely continuous if   for every sequence $\seq a$ of $\alg$ such that $f(a_n^*a_n)\to0$ and $g((a_n-a_m)^*(a_n-a_m))\to0$ it follows that $g(a_n^*a_n)\to0$. Furthermore, $g$ and $f$ are singular with respect to each other if $h=0$ is the only representable functional such that $h\leq f$ and $h\leq g$. 

Denote by $B:\alg\to\bar\alg^*$ be  the positive operator associated with $g$ and let $(\hilb,\pi_g,\zeta_g)$ the corresponding GNS-triplet obtained along the above procedure. Let us introduce $\M\subseteq \hilb$ and $P$ as in Section 3. Then $\M$ and $\M^\perp$ are both  $\pi_g$-invariant, so 
\begin{equation*}
    g_s(a)\coloneqq \sipb{\pi_g(a)P\zeta_g}{P\zeta_g},\qquad  g_a(a)\coloneqq \sipb{\pi_g(a)(I-P)\zeta_g}{(I-P)\zeta_g} 
\end{equation*}
are  representable functionals on $\alg$ such that 
\begin{equation}\label{E:8.4}
    \dual{B_aa}{b}=g_a(b^*a),\qquad \dual{B_sa}{b}=g_s(b^*a).
\end{equation}
It is clear therefore that $g_a\ll f$ and $g_s\perp f$. If $\alg$ has a unit element $1$ then the absolutely continuous and singular parts can be written in a much simpler form: 
\begin{equation*}
    g_a(a)=\overline{\dual{B_a1}{a}},\qquad g_s(a)=\overline{\dual{B_s1}{a}},\qquad a\in\alg.
\end{equation*}
After these observations we can state the corresponding Lebesgue decomposition theorem of representable functionals \cite{Tarcsay_repr}*{Theorem 3.3}; cf. also \cite{Gudder}*{Corollary 3} and \cite{TZSTT-Glasgow}*{Theorem 3.3}:
\begin{theorem}\label{T:representLebesgue}
 Let $f,g$ be representable functionals on the *-algebra $\alg$, then $g_a$ and $g_s$ are representable functionals such that $g=g_a+g_s$, where $g_a$ is $f$-absolutely continuous and $g_s$ is $f$-singular. Furthermore, $g_a$ is is maximal among those representable functionals $h$ such that $h\leq g$ and $h\ll f$.
\end{theorem}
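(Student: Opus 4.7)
The plan is to reduce every assertion to the operator-level Lebesgue decomposition established in Theorem \ref{T:main_Lebdecomp}, by transporting through the dictionary between representable functionals and positive operators on $\dual{\bar\alg^{*}}{\alg}$: $f\leftrightarrow A$, $g\leftrightarrow B$, $g_{a}\leftrightarrow B_{a}$, $g_{s}\leftrightarrow B_{s}$, via $\dual{Aa}{b}=f(b^{*}a)$, $\dual{Ba}{b}=g(b^{*}a)$, and \eqref{E:8.4}.

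The first step is to verify that $g_{a}$ and $g_{s}$ are genuinely representable. The decisive point is that both $\M$ and $\M^{\perp}$ are $\pi_{g}$-invariant. Indeed, if $\xi=\lim Ba_{n}\in\M$ with $f(a_{n}^{*}a_{n})\to 0$, then $\pi_{g}(a)(Ba_{n})=B(aa_{n})\to\pi_{g}(a)\xi$, and since $f((aa_{n})^{*}(aa_{n}))=\|\pi_{f}(a)(Aa_{n})\|_{A}^{2}\leq \|\pi_{f}(a)\|^{2}f(a_{n}^{*}a_{n})\to 0$, we conclude $\pi_{g}(a)\xi\in\M$. Invariance of $\M^{\perp}$ is obtained by passing to $\pi_{g}(a)^{*}=\pi_{g}(a^{*})$. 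It follows that $(\M^{\perp},\pi_{g}|_{\M^{\perp}},(I-P)\zeta_{g})$ and $(\M,\pi_{g}|_{\M},P\zeta_{g})$ are GNS triples representing $g_{a}$ and $g_{s}$, respectively, while the identity $g=g_{a}+g_{s}$ is immediate from $I=P+(I-P)$ together with the orthogonality $\pi_{g}(a)P\zeta_{g}\perp (I-P)\zeta_{g}$.

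Absolute continuity and singularity are then transported through \eqref{E:8.4}. For $g_{a}\ll f$: a sequence $\seq{a}$ with $f(a_{n}^{*}a_{n})\to 0$ and $g_{a}((a_{n}-a_{m})^{*}(a_{n}-a_{m}))\to 0$ corresponds precisely to $\dual{Aa_{n}}{a_{n}}\to 0$ and $\dual{B_{a}(a_{n}-a_{m})}{a_{n}-a_{m}}\to 0$, so $B_{a}\ll A$ (Theorem \ref{T:main_Lebdecomp}) yields $g_{a}(a_{n}^{*}a_{n})\to 0$. For $g_{s}\perp f$: any representable $h$ with $h\leq f$ and $h\leq g_{s}$ produces a positive operator $C$ with $C\leq A$ and $C\leq B_{s}$; mutual singularity forces $C=0$, which translates via the GNS construction of $h$ into $\pi_{h}(a)\zeta_{h}=0$ for every $a\in\alg$ and hence $h=0$.

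Maximality follows by the same pull-back: for representable $h$ with $h\leq g$ and $h\ll f$, the associated operator $C$ satisfies $C\leq B$ and $C\ll A$, so the maximality clause of Theorem \ref{T:main_Lebdecomp} gives $C\leq B_{a}$, that is, $h(a^{*}a)\leq g_{a}(a^{*}a)$ for all $a\in\alg$, which is $h\leq g_{a}$. The one place I expect to need genuine care is the $\pi_{g}$-invariance of $\M$ and $\M^{\perp}$ underpinning representability of the two pieces; once that is in place, representability of $g_{a}$ and $g_{s}$ is automatic and everything else reduces to mechanical translation through the $f\leftrightarrow A$, $g\leftrightarrow B$ correspondence.
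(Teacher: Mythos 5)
Your proposal is correct and takes essentially the same route as the paper: the paper likewise derives representability of $g_a$ and $g_s$ from the $\pi_g$-invariance of $\M$ and $\M^{\perp}$, and transports absolute continuity, singularity, and maximality through the correspondence \eqref{E:8.4} to the operator-level decomposition of Theorem \ref{T:main_Lebdecomp}. Your explicit verification of the invariance of $\M$ (using the boundedness of $\pi_f(a)$ on $\hila$) and of the maximality transfer merely fills in steps the paper asserts without detail.
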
 
Finally we note that not every positive operator $A:\alg\to\bar\alg^*$ arises from a representable functional, hence the question of uniqueness of the Lebesgue decomposition cannot be answered via Theorem \ref{T:lebunique}. For a detailed discussion of this delicate problem we refer the reader to \cite{TZSTT-Glasgow}.
\subsection{Finitely additive and $\sigma$-additive set functions} Let $X$ be a non-empty set and $\ring$ be an algebra of sets on $X$. Let $\alpha$ be a non-negative finitely additive measure and denote by $\step$ the unital *-algebra of $\ring$-measurable functions, then $\alpha$ induces a positive operator $A:\step\to\bar\step^*$ by
\begin{equation*}
    \dual{A\phi}{\psi}\coloneqq\int\phi\bar\psi\,d\alpha,\qquad \phi,\psi\in\step.
\end{equation*}
We notice that we can easily recover $\alpha$ from $A$, namely 
\begin{equation}\label{E:alphaR}
    \alpha(R)=\dual{A\chi^{}_R}{\chi^{}_R},\qquad R\in \ring.
\end{equation}
However, not every positive operator $A:\step\to\bar\step^*$ induces a finitely additive measure, as it turns out from the next statement.
\begin{proposition}\label{P:8.4}
If $A:\step\to\bar\step^*$ is a positive operator then \eqref{E:alphaR} defines an additive set function if and only if 
\begin{equation}\label{E:A-alpha}
    \dual{A\abs\phi}{\abs\phi}=\dual{A\phi}{\phi},\qquad \phi\in\step.
\end{equation}
\end{proposition}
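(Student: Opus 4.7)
The proof splits naturally into the two directions. The ``only if'' direction is essentially tautological under the natural reading that ``\eqref{E:alphaR} defines an additive set function'' means that $A$ is actually integration against the candidate $\alpha$ from \eqref{E:alphaR}: since $|\phi|^2=\phi\bar\phi$, one has
\begin{equation*}
\dual{A|\phi|}{|\phi|}=\int|\phi|^2\,d\alpha=\dual{A\phi}{\phi}.
\end{equation*}

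For ``if'' my plan is to extract from \eqref{E:A-alpha} the vanishing of all off-diagonal quantities $\beta_{R_1,R_2}\coloneqq\dual{A\chi_{R_1}}{\chi_{R_2}}$ for disjoint $R_1,R_2\in\ring$. I would apply \eqref{E:A-alpha} to the test function $\phi\coloneqq\chi_{R_1}+c\chi_{R_2}$ for arbitrary $c\in\dupC$. Because $R_1\cap R_2=\emptyset$ one has $|\phi|=\chi_{R_1}+|c|\chi_{R_2}$, and sesquilinearity together with $\dual{A\chi_{R_2}}{\chi_{R_1}}=\overline{\beta_{R_1,R_2}}$ (self-adjointness of $A$) gives
\begin{align*}
\dual{A\phi}{\phi} &= \alpha(R_1)+|c|^2\alpha(R_2)+2\operatorname{Re}(\bar c\,\beta_{R_1,R_2}),\\
\dual{A|\phi|}{|\phi|} &= \alpha(R_1)+|c|^2\alpha(R_2)+2|c|\operatorname{Re}\beta_{R_1,R_2}.
\end{align*}
Equating by \eqref{E:A-alpha} forces $\operatorname{Re}(\bar c\,\beta_{R_1,R_2})=|c|\operatorname{Re}\beta_{R_1,R_2}$ for every $c\in\dupC$; putting $c=e^{i\theta}$ and varying $\theta$ then yields $\beta_{R_1,R_2}=0$.

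With cross-term vanishing in hand, additivity of $\alpha$ on $\ring$ follows at once by expanding $\alpha(R_1\cup R_2)=\dual{A(\chi_{R_1}+\chi_{R_2})}{\chi_{R_1}+\chi_{R_2}}$. Moreover, since every $\phi\in\step$ is a finite linear combination $\sum_j c_j\chi_{R_j}$ with pairwise disjoint $R_j\in\ring$, sesquilinearity and the vanishing of the $\beta_{R_j,R_k}$ for $j\neq k$ yield
\begin{equation*}
\dual{A\phi}{\phi}=\sum_j|c_j|^2\alpha(R_j)=\int|\phi|^2\,d\alpha,
\end{equation*}
and polarization of the two sesquilinear forms $(\phi,\psi)\mapsto\dual{A\phi}{\psi}$ and $(\phi,\psi)\mapsto\int\phi\bar\psi\,d\alpha$ extends this diagonal equality to $\dual{A\phi}{\psi}=\int\phi\bar\psi\,d\alpha$ on all of $\step\times\step$.

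The only genuinely subtle step is the forced vanishing of $\beta_{R_1,R_2}$. A real test function such as $\phi=\chi_{R_1}-\chi_{R_2}$ would only deliver $\operatorname{Re}\beta_{R_1,R_2}=0$, which is nothing more than raw pairwise additivity of $\alpha$; admitting a genuinely complex coefficient $c$ in the test function is what allows one also to kill $\operatorname{Im}\beta_{R_1,R_2}$, thereby upgrading bare additivity to full compatibility of $A$ with integration against $\alpha$.
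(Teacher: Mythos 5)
Your argument is correct, and in the converse direction it proves strictly more than the paper does, by a genuinely different device. The paper's proof applies \eqref{E:A-alpha} to the single \emph{real} test function $\chi_{R_1}-\chi_{R_2}$, whose modulus is $\chi_{R_1}+\chi_{R_2}$, and feeds this into the parallelogram identity
$\tfrac12\bigl\{\dual{A(\chi_{R_1}+\chi_{R_2})}{\chi_{R_1}+\chi_{R_2}}+\dual{A(\chi_{R_1}-\chi_{R_2})}{\chi_{R_1}-\chi_{R_2}}\bigr\}=\alpha(R_1)+\alpha(R_2)$;
this yields exactly $\operatorname{Re}\dual{A\chi_{R_1}}{\chi_{R_2}}=0$ for disjoint $R_1,R_2$, i.e.\ additivity of $\alpha$, and nothing more. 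Your complex-coefficient test functions $\chi_{R_1}+c\chi_{R_2}$ force the whole cross term $\dual{A\chi_{R_1}}{\chi_{R_2}}=0$, and by disjointification and polarization you recover the full representation $\dual{A\phi}{\psi}=\int\phi\bar\psi\,d\alpha$, i.e.\ that $A$ is actually \emph{induced} by $\alpha$. This extra strength is not cosmetic: under the literal reading of the proposition (additivity of the set function \eqref{E:alphaR}, nothing more) the ``only if'' direction is in fact false. On $X=\{1,2\}$ the positive operator with Gram matrix $\bigl(\begin{smallmatrix}1&i/2\\-i/2&1\end{smallmatrix}\bigr)$ satisfies $\alpha(\{1,2\})=2=\alpha(\{1\})+\alpha(\{2\})$, so $\alpha$ is additive, yet for $\phi=\chi_{\{1\}}+i\chi_{\{2\}}$ one gets $\dual{A\phi}{\phi}=2\pm1\neq 2=\dual{A\abs{\phi}}{\abs{\phi}}$ (sign depending on the sesquilinearity convention), violating \eqref{E:A-alpha}; the same example shows that additivity alone does not make $A$ integration against $\alpha$. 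Hence the proposition must be read in your stronger sense --- ``$A$ is induced by an additive set function'' --- which is also precisely how the paper later invokes it in the proof of Theorem \ref{T:finiteaddLeb}. Under that reading your ``only if'' is, as you note, tautological, while the paper's converse argument stops at additivity and leaves the induced-representation step implicit; your proof supplies it, so you have closed a genuine (if small) gap rather than merely reproduced the published argument.
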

\begin{proof}
 The ``only if'' part of the statement is clear. For the converse suppose that $A$ satisfies \eqref{E:A-alpha}. For every two disjoint sets $R_1,R_2\in\ring$ we have
 \begin{align*}
     \alpha(R_1\cup R_2)&=\frac12 \big\{\dual{A(\chi^{}_{R_1}+\chi^{}_{R_2})}{\chi^{}_{R_1}+\chi^{}_{R_2}}+\dual{A(\chi^{}_{R_1}-\chi^{}_{R_2})}{\chi^{}_{R_1}-\chi^{}_{R_2}}\big\}\\
     &=\dual{A\chi^{}_{R_1}}{\chi^{}_{R_1}}+\dual{A\chi^{}_{R_2}}{\chi^{}_{R_2}}=\alpha(R_1)+\alpha(R_2),
 \end{align*}
 proving the additivity of $\alpha$.
\end{proof}
Assume  that we are given another nonnegative additive set function $\beta$ on $\ring$, then $\beta$ is called absolutely continuous with respect to $\alpha$ if for each $\varepsilon>0$ there exists some $\delta>0$ such that $R\in\ring$ and $\alpha(R)<\delta$ imply $\beta(R)<\varepsilon$. Furthermore, $\alpha$ and $\beta$ are mutually singular if $\gamma=0$ is the only nonnegative additive set function such that $\gamma\leq \alpha$ and $\gamma\leq \beta$.

Our claim is to prove that the Lebesgue decomposition of $\beta$ with respect to $\alpha$ can also be derived from that of the induced positive operators. To this aim we note first that singularity of $A$ and $B$ obviously implies the singularity of $\alpha$ and $\beta$. It is less obvious that $A$-absolute continuity of $B$ implies the $\alpha$-absolute continuity of $\beta$ (cf. also \cite{Tarcsay_RN}*{Lemma 3.1}). To see this consider a sequence $\seq R$ of $\ring$ such that $\alpha(R_n)\to0$. Clearly,
\begin{equation*}
    \sipa{J_A^*\chi^{}_{R_n}}{J_A^*\chi^{}_{R_n}}=\dual{A\chi^{}_{R_n}}{\chi^{}_{R_n}}\to0.
\end{equation*}
Since $\sipb{J_B^*\chi^{}_{R_n}}{J_B^*\chi^{}_{R_n}}\leq \beta(X)$, the sequence $(J_B^*\chi^{}_{R_n})_{\nen}$ is bounded in $\hilb$, and for every $\xi\in\dom \widehat{B}^*$,
\begin{equation*}
    \sipb{J_B^*\chi^{}_{R_n}}{\xi}=\sipa{J_A^*\chi^{}_{R_n}}{\widehat  B^*\xi}\to0.
\end{equation*}
Consequently, $J_B^*\chi^{}_{R_n}\to 0$ weakly in $\hilb$, and hence $B\chi^{}_{R_n}\to0$ in $\bar{\step}^*$ with respect to the weak-* topology $\sigma(\bar{\step}^*,\step)$. This implies that 
\begin{equation*}
    \beta(R_n)=\dual{B\chi^{}_{R_n}}{1}\to0,
\end{equation*}
hence  $\beta\ll\alpha$.

\begin{theorem}\label{T:finiteaddLeb}
Let  $\alpha,\beta:\ring\to\dupR_+ $ be nonnegative additive set functions. There exist two nonnegative additive set functions $\beta_a,\beta_s$   such that $\beta=\beta_a+\beta_s$, where $\beta_a$  is $\alpha$-absolutely continuous and $\beta_s$ is $\alpha$-singular.
\end{theorem}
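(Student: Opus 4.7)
The plan is to reduce the statement to Theorem~\ref{T:main_Lebdecomp} by passing through the positive operators $A,B\colon\step\to\bar\step^*$ attached to $\alpha,\beta$ as in Subsection~8.4. Applying Theorem~\ref{T:main_Lebdecomp} I obtain $B=B_a+B_s$ with $B_a\ll A$ and $B_s\perp A$, and then set
\begin{equation*}
\beta_a(R)\coloneqq \dual{B_a\chi_R}{\chi_R},\qquad \beta_s(R)\coloneqq \dual{B_s\chi_R}{\chi_R},\qquad R\in\ring.
\end{equation*}
Positivity and the identity $\beta=\beta_a+\beta_s$ on $\ring$ are immediate. The three remaining tasks are to prove (i) additivity of $\beta_a$ and $\beta_s$, (ii) $\beta_a\ll\alpha$, and (iii) $\beta_s\perp\alpha$.

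The main obstacle is (i), which by the polarization identity reduces to showing that for every pair of disjoint sets $R_1,R_2\in\ring$ one has $\dual{B_a\chi_{R_1}}{\chi_{R_2}}=0$; the analogous identity for $B_s$ then follows by subtraction from $\dual{B\chi_{R_1}}{\chi_{R_2}}=\int\chi_{R_1}\overline{\chi_{R_2}}\,d\beta=0$. To verify this I would invoke Theorem~\ref{T:[A]B=Ba}, which exhibits $B_a$ as the pointwise weak-* limit of $T_n\coloneqq(nA):B$. By the variational formula \eqref{E:parallel},
\begin{equation*}
\dual{T_n(\chi_{R_1}+u\chi_{R_2})}{\chi_{R_1}+u\chi_{R_2}}=\inf_{\psi\in\step}\Bigl\{n\!\int|\chi_{R_1}+u\chi_{R_2}-\psi|^2\,d\alpha+\!\int|\psi|^2\,d\beta\Bigr\},
\end{equation*}
and for any scalar $u$ with $|u|=1$ the substitution $\psi\mapsto u\psi\chi_{R_2}+\psi(1-\chi_{R_2})$ is a bijection of $\step$ which leaves both integrands invariant (both involve only the modulus). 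Hence the infimum depends only on $|u|$; choosing $u=\pm1,\pm i$ produces four equal values, so the polarization identity yields $\dual{T_n\chi_{R_1}}{\chi_{R_2}}=0$ for every $n$. Passing to the limit in Theorem~\ref{T:[A]B=Ba} gives $\dual{B_a\chi_{R_1}}{\chi_{R_2}}=0$. Together with nonnegativity this implies additivity (equivalently, one can verify the criterion $\dual{B_a|\phi|}{|\phi|}=\dual{B_a\phi}{\phi}$ of Proposition~\ref{P:8.4} on simple functions $\phi=\sum a_i\chi_{R_i}$ with pairwise disjoint $R_i$).

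For (ii), the argument is already carried out in the paragraph preceding the theorem statement: from $B_a\ll A$ and the density of $\dom\widehat{B_a}^*$ in $\mathcal{H}_{B_a}$ one gets $B_a\chi_{R_n}\to 0$ weak-$*$ whenever $\alpha(R_n)\to 0$, and evaluating against $1\in\step$ yields $\beta_a(R_n)\to 0$. For (iii), suppose $\gamma\colon\ring\to\dupR_+$ is additive with $\gamma\leq\alpha$ and $\gamma\leq\beta_s$, and let $C\colon\step\to\bar\step^*$ be the associated positive operator. For any $\phi=\sum a_i\chi_{R_i}\in\step$ with disjoint $R_i$, using the off-diagonal vanishing just established,
\begin{equation*}
\dual{C\phi}{\phi}=\sum_i|a_i|^2\gamma(R_i)\le\sum_i|a_i|^2\alpha(R_i)=\dual{A\phi}{\phi},
\end{equation*}
and likewise $\dual{C\phi}{\phi}\le\dual{B_s\phi}{\phi}$, so $C\leq A$ and $C\leq B_s$ as positive operators. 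The operator singularity $B_s\perp A$ (Theorem~\ref{T:main_Lebdecomp}) then forces $C=0$, whence $\gamma(R)=\dual{C\chi_R}{\chi_R}=0$ for all $R\in\ring$. This finishes the proof once the off-diagonal step is in hand; the phase-invariance observation for parallel sums is the crux of the argument.
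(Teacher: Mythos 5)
Your proof is correct, and its crux --- the additivity of $\beta_a$ --- is handled by a genuinely different argument than the paper's. The paper does not touch the parallel sum at this point: it introduces the representable functionals $f(\phi)=\int\phi\,d\alpha$ and $g(\phi)=\int\phi\,d\beta$ on the commutative $^*$-algebra $\step$, invokes the decomposition $g=g_a+g_s$ of Theorem \ref{T:representLebesgue} (which rests on the $\pi_g$-invariance of $\M$ and $\M^{\perp}$ from Subsection 8.3), and then uses identity \eqref{E:8.4}, namely $\dual{B_a\phi}{\phi}=g_a(\abs{\phi}^2)$, to verify the criterion \eqref{E:A-alpha} of Proposition \ref{P:8.4}; since $\abs{\phi}^2=\phi^*\phi$ depends only on $\abs{\phi}$, identity \eqref{E:Baphi} is immediate. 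You instead derive the off-diagonal vanishing $\dual{B_a\chi_{R_1}}{\chi_{R_2}}=0$ from the variational formula \eqref{E:parallel} for $(nA):B$, the substitution $\psi\mapsto u\psi\chi_{R_2}+\psi(1-\chi_{R_2})$ (your pointwise check is exact: on $R_2$ the first integrand is $\abs{1-\psi}^2$ on both sides by disjointness and $\abs{u}=1$, off $R_2$ nothing changes, and the map is a bijection of $\step$), polarization, and the limit Theorem \ref{T:[A]B=Ba}. Conceptually the two arguments are the same symmetry in different clothing: your substitution is multiplication by the unimodular element $u\chi_{R_2}+(1-\chi_{R_2})$ of $\step$, which is exactly the algebra-invariance the paper exploits through $\pi_g$, read off at the level of the infimum. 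What the paper's route buys is uniformity and economy --- \eqref{E:Baphi} holds for all $\phi\in\step$ at once, and the functional-level decomposition comes for free from machinery already built; what yours buys is self-containedness, needing only Section 4 and none of the GNS apparatus. Two details worth making explicit in a final write-up: in your step (ii) the identification $\beta_a(R_n)=\dual{B_a\chi_{R_n}}{1}$ itself uses the off-diagonal vanishing (applied to $R_n$ and $X\setminus R_n$), so additivity must be established first; and in step (iii) the identity $\dual{B_s\phi}{\phi}=\sum_i\abs{a_i}^2\beta_s(R_i)$ likewise leans on $\dual{B_s\chi_{R_i}}{\chi_{R_j}}=0$, which you correctly obtain by subtraction from $\dual{B\chi_{R_i}}{\chi_{R_j}}=0$. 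Your singularity transfer in (iii) is also spelled out more fully than in the paper, which dismisses that direction as obvious in the paragraph preceding the theorem.
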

\begin{proof}
Consider the $A$-Lebesgue-decomposition $B=B_a+B_s$ of the corresponding induced operators. According to the above observation it suffices to show that $B_a$ (and hence also $B_s$) is induced by an additive set function $\beta_a$ (respectively, $\beta_s$). By Proposition \ref{P:8.4}, this will be done if we prove that 
\begin{equation}\label{E:Baphi}
    \dual{B_a\phi}{\phi}=\dual{B_a\abs\phi}{\abs\phi},\qquad \phi\in\step.
\end{equation}
Set
\begin{equation*}
    f(\phi)\coloneqq\int \phi\,d\alpha,\qquad g(\phi)\coloneqq \int\phi\,d\beta,\qquad \phi\in\step,
\end{equation*}
so that $f,g$ are representable functionals on $\step$. By Theorem \ref{T:representLebesgue}, $g$ splits into $f$-absolutely continuous and $f$-singular parts $g_a$, $g_s$ respectively. By \eqref{E:8.4},
\begin{equation*}
    \dual{B_a\phi}{\phi}=g_a(\abs{\phi}^2),\qquad \phi\in\step.
\end{equation*}
This obviously gives \eqref{E:Baphi}.
\end{proof}
Finally, assume that $\ring$ is a $\sigma$-algebra and  $\mu,\nu$ are finite measures on $\ring$. By Theorem \ref{T:finiteaddLeb}  there exist two nonnegative (finitely) additive set functions $\nu_a,\nu_s$ such that $\nu=\nu_a+\nu_s$ where $\nu_a\ll \mu$ and $\nu_s\perp \mu$. Note that both functions are dominated by the measure $\nu$, hence $\nu_a,\nu_s$ are forced to be $\sigma$-additive. This fact leads us the Lebesgue decomposition of measures:
\begin{corollary}
If $\mu,\nu$ are finite measures on a $\sigma$-algebra $\ring$ then there exist two measures $\nu_a,\nu_s$ such that $\nu=\nu_a+\nu_s$, where $\nu_a$  is $\mu$-absolutely continuous and $\nu_s$ is $\mu$-singular.
\end{corollary}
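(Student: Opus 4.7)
The plan is to use Theorem \ref{T:finiteaddLeb} as a black box and then upgrade finite additivity to $\sigma$-additivity by exploiting the domination of both parts by the $\sigma$-additive $\nu$. The rest is essentially bookkeeping: one has to notice that the ambient notions of absolute continuity and singularity for finite additive set functions restrict verbatim to measures.

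Concretely, I would proceed in three steps. First, I apply Theorem \ref{T:finiteaddLeb} to $\mu,\nu$ viewed as nonnegative additive set functions on $\ring$, producing a decomposition $\nu=\nu_a+\nu_s$ into nonnegative additive set functions with $\nu_a\ll\mu$ and $\nu_s\perp\mu$. Since $\nu_a,\nu_s\geq 0$ and their sum is $\nu$, I obtain the pointwise bounds $0\leq\nu_a(R)\leq\nu(R)$ and $0\leq\nu_s(R)\leq\nu(R)$ for every $R\in\ring$.

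Second, I upgrade each $\nu_a,\nu_s$ to a $\sigma$-additive measure. Given a disjoint sequence $(R_n)_{n\in\mathbb{N}}$ with $R=\bigcup_{n=1}^{\infty}R_n\in\ring$, finite additivity of $\nu_a$ gives, for every $N$,
\begin{equation*}
\nu_a(R)=\sum_{n=1}^{N}\nu_a(R_n)+\nu_a\Bigl(\bigcup_{n>N}R_n\Bigr),
\end{equation*}
and the tail is controlled by
\begin{equation*}
0\leq \nu_a\Bigl(\bigcup_{n>N}R_n\Bigr)\leq \nu\Bigl(\bigcup_{n>N}R_n\Bigr)=\sum_{n>N}\nu(R_n)\longrightarrow 0,
\end{equation*}
since $\nu$ is finite and $\sigma$-additive, so the series $\sum_n\nu(R_n)$ converges. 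This forces $\nu_a(R)=\sum_{n=1}^{\infty}\nu_a(R_n)$, hence $\nu_a$ is $\sigma$-additive; the identical reasoning applies to $\nu_s$.

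Third, I verify that the absolute continuity and singularity properties survive the transition to the $\sigma$-additive category. The $\varepsilon$-$\delta$ definition of $\nu_a\ll\mu$ does not depend on whether one regards $\nu_a$ as finitely or countably additive, so it is inherited without change. For singularity, the class of competitors $\gamma$ with $\gamma\leq\nu_s$ and $\gamma\leq\mu$ only shrinks when one restricts to $\sigma$-additive $\gamma$, so $\nu_s\perp\mu$ in the measure sense follows from the finitely additive version established in Theorem \ref{T:finiteaddLeb}.

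There is no real obstacle: the substantive work is contained in Theorem \ref{T:finiteaddLeb}. The only point where one must be slightly careful is the tail argument above, which uses both the finiteness of $\nu$ and its $\sigma$-additivity in an essential way; without finiteness of $\nu$ one could not in general conclude that $\nu_a,\nu_s$ are $\sigma$-additive from domination alone.
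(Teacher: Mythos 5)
Your proposal is correct and takes essentially the same route as the paper: the paper likewise invokes Theorem \ref{T:finiteaddLeb} and then observes that $\nu_a,\nu_s$ are dominated by the finite measure $\nu$ and are hence forced to be $\sigma$-additive, which is precisely your tail estimate spelled out in detail. The only difference is cosmetic --- you make explicit the tail argument and the (immediate) transfer of the $\varepsilon$-$\delta$ absolute continuity and of order-theoretic singularity to the $\sigma$-additive setting, both of which the paper leaves implicit.
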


\end{document}